\documentclass{article}
\usepackage{array,amsmath,amsthm}
\numberwithin{equation}{section}
\usepackage{amssymb}
\usepackage{indentfirst}
\usepackage{geometry}
\textwidth 155truemm
% Set the beginning of a LaTeX document
\begin{document}

\newtheorem{thm}{Theorem}[section]
\newtheorem{cor}[thm]{Corollary}
\newtheorem{prop}[thm]{Proposition}
\newtheorem{conj}[thm]{Conjecture}
\newtheorem{lem}[thm]{Lemma}
\newtheorem{Def}[thm]{Definition}
\newtheorem{rem}[thm]{Remark}
\newtheorem{prob}[thm]{Problem}
\newtheorem{ex}{Example}[section]

\newcommand{\be}{\begin{equation}}
\newcommand{\ee}{\end{equation}}
\newcommand{\ben}{\begin{enumerate}}
\newcommand{\een}{\end{enumerate}}
\newcommand{\beq}{\begin{eqnarray}}
\newcommand{\eeq}{\end{eqnarray}}
\newcommand{\beqn}{\begin{eqnarray*}}
\newcommand{\eeqn}{\end{eqnarray*}}
\newcommand{\bei}{\begin{itemize}}
\newcommand{\eei}{\end{itemize}}

\newcommand{\pa}{{\partial}}
\newcommand{\V}{{\rm V}}
\newcommand{\R}{{\bf R}}
\newcommand{\K}{{\rm K}}
\newcommand{\e}{{\epsilon}}
\newcommand{\tomega}{\tilde{\omega}}
\newcommand{\tOmega}{\tilde{Omega}}
\newcommand{\tR}{\tilde{R}}
\newcommand{\tB}{\tilde{B}}
\newcommand{\tGamma}{\tilde{\Gamma}}
\newcommand{\fa}{f_{\alpha}}
\newcommand{\fb}{f_{\beta}}
\newcommand{\faa}{f_{\alpha\alpha}}
\newcommand{\faaa}{f_{\alpha\alpha\alpha}}
\newcommand{\fab}{f_{\alpha\beta}}
\newcommand{\fabb}{f_{\alpha\beta\beta}}
\newcommand{\fbb}{f_{\beta\beta}}
\newcommand{\fbbb}{f_{\beta\beta\beta}}
\newcommand{\faab}{f_{\alpha\alpha\beta}}

\newcommand{\pxi}{ {\pa \over \pa x^i}}
\newcommand{\pxj}{ {\pa \over \pa x^j}}
\newcommand{\pxk}{ {\pa \over \pa x^k}}
\newcommand{\pyi}{ {\pa \over \pa y^i}}
\newcommand{\pyj}{ {\pa \over \pa y^j}}
\newcommand{\pyk}{ {\pa \over \pa y^k}}
\newcommand{\dxi}{{\delta \over \delta x^i}}
\newcommand{\dxj}{{\delta \over \delta x^j}}
\newcommand{\dxk}{{\delta \over \delta x^k}}

\newcommand{\px}{{\pa \over \pa x}}
\newcommand{\py}{{\pa \over \pa y}}
\newcommand{\pt}{{\pa \over \pa t}}
\newcommand{\ps}{{\pa \over \pa s}}
\newcommand{\pvi}{{\pa \over \pa v^i}}
\newcommand{\ty}{\tilde{y}}
\newcommand{\bGamma}{\bar{\Gamma}}

\title {Elliptic Harnack inequality and its applications on Finsler metric measure spaces
\footnote{The first author is supported by the National Natural Science Foundation of China (12371051, 12141101, 11871126).}}
\author{ Xinyue Cheng, Liulin Liu  and Yu Zhang }
\date{}

\maketitle

\begin{abstract}
In this paper, we study the elliptic Harnack inequality and its applications on forward complete Finsler metric measure spaces under the conditions that the weighted Ricci curvature ${\rm Ric}_{\infty}$ has non-positive lower bound and the distortion $\tau$ is of linear growth, $|\tau|\leq ar+b$, where $a,b$ are some non-negative constants, $r=d(x_0,x)$ is the distance function for some point $x_{0} \in M$. We obtain an elliptic $p$-Harnack inequality for positive harmonic functions from a local uniform Poincar\'{e} inequality and a mean value inequality. As applications of the Harnack inequality, we derive the H\"{o}lder continuity estimate and a Liouville theorem for positive harmonic functions. Furthermore, we establish a gradient estimate for positive harmonic functions. \\
{\bf Keywords:} Finsler metric measure manifold; weighted Ricci curvature; Harnack inequality; H\"{o}lder continuity; Liouville property; gradient estimate\\
{\bf Mathematics Subject Classification:} 53C60, 53B40, 53C21, 58C35
\end{abstract}

\maketitle

\section{Introduction}
With the maturation of the theory of partial differential equations, geometric analysis has been fully developed in the last few decades and has become an important field in current geometric research. The study of Harnack inequality is one of the important research topics in geometric analysis. Elliptic Harnack inequality originated from Moser's work \cite{M1} in 1961. He gave the proof of a elliptic Harnack inequality for positive solutions of uniformly elliptic equations on a ball in $\mathbb{R}^n$ by using his famous iterative argument. As an application of the elliptic Harnack inequality,  Moser deduced related uniform estimate on the H\"{o}lder continuity. Later, Yau proved a Harnack inequality for positive harmonic functions on a ball in complete Riemannian manifolds under the condition that Ricci curvature has a non-positive lower bound in \cite{Yau}.  Anderson-Schoen obtained two elliptic Harnack-type inequalities for positive harmonic functions in complete Riemannian manifolds, from which they characterized Poisson heat function and Martin expression formula (\cite{AnS}).  Krylov and Safonov proved a elliptic Harnack inequality for non-negative solutions of an elliptic linear differential equation of order two and obtained the H\"{o}lder continuity estimate (see Theorem 4.41 in \cite{au}).

Let $M$ be a complete Riemannian manifold. One says that $M$ has the strong Liouville property if any solution $u$ of the Laplace equation $\Delta u=0$ on $M$ which is bounded below (or above) is a constant. Usually, the weak Liouville property refers to the same result for bounded solutions. In the classical case of harmonic functions in $\mathbb{R}^n$, the strong (and weak) Liouville property is satisfied and one can also prove that a harmonic function which has sublinear growth must be a constant. It is well known that Liouville-type properties follow from Harnack-type inequalities in Riemannian geometry (see \cite{SC}). For more details about elliptic Harnack inequality and its applications in Riemannian geometry, please refer to \cite{AnS,au,N,SC1,SC,Yau}.

It is natural to study and develop Harnack inequalities and the relevant theories on Finsler metric measure manifolds. In \cite{XiaC}, C. Xia proved the local gradient estimate for positive harmonic functions on forward complete, non-compact Finsler measure spaces under the condition that the weighted Ricci curvature Ric$_N$ has a non-positive lower bound. As applications, he obtained a Harnack inequality and two Liouville-type theorems on forward complete and non-compact Finsler manifolds with non-negative weighted Ricci curvature ${\rm Ric}_N$.
In \cite{XiaQ1},  Q. Xia gave the local and global gradient estimates for positive Finsler $p$-eigenfunctions on forward complete non-compact Finsler metric measure spaces with weighted Ricci curvature ${\rm Ric}_{N}\geq -K$ for some $K\geq 0$. As applications of local gradient estimate, she first obtained  a Harnack inequality for a positive $p (>1)$-eigenfunction when Finsler metric $F$ satisfies the uniform convexity and uniform smoothness. Then she obtained a Liouville property  for a $p$-eigenfunction bounded from below when  ${\rm Ric}_{N}\geq 0$.
Besides, Q. Xia  proved some Liouville theorems of non-negative $L^{p} (0<p \leq 1)$ subharmonic functions on forward complete Finsler measure spaces with finite reversibility and ${\rm Ric}_{N} \geq -K $ for some $K \geq 0$ (\cite{XiaQ2}).  On the other hand,  recently the first author and Feng derived local and global Harnack inequalities for positive harmonic functions on forward complete Finsler measure spaces under the conditions that ${\rm Ric}_{\infty} \geq K$ and $|\tau|\leq k$ for some $K\in \mathbb{R}$  and $k>0$ in \cite{CF}. Furthermore, they gave a gradient estimate for positive harmonic functions on  forward complete non-compact Finsler measure spaces equipped with a uniformly convex and uniformly smooth Finsler metric $F$ by using a the mean value inequality and the Harnack inequality (\cite{CF}).

For convenience to introduce our main results, we first give some necessary definitions and notations.  We always use $(M, F, m)$ to denote a Finsler manifold $(M, F)$ equipped with a smooth measure $m$ which we call a Finsler metric measure manifold (or Finsler measure space briefly). A Finsler measure space is not a metric space in usual sense because $F$ may be non-reversible, i.e., $F(x, y) \neq F(x, -y)$ may happen. This non-reversibility causes the asymmetry of the associated distance function. A Finsler metric $F$ on $M$ is said to be reversible if $F(x, -y)=F(x, y)$ for all $(x, y) \in TM$. Otherwise, we say $F$ is irreversible. In order to overcome the obstacles caused by irreversibility, Rademacher  defined the reversibility constant $\Lambda$ of $F$ by
\be
\Lambda:=\sup\limits_{(x, y)\in TM\setminus\{0\}}\frac{F(x,-y)}{F(x,y)}.
\ee
Obviously, $\Lambda\in [1,\infty]$ and $\Lambda=1$ if and only if $F$ is reversible (\cite{Ra}). On the other hand, Ohta extended the concepts of uniform smoothness and uniform convexity in Banach space theory into Finsler geometry and gave their geometric interpretation in \cite{Ohta}. We say that $F$ satisfies uniform convexity and uniform smoothness if there exist two uniform constants $0<\kappa^{*}\leq 1 \leq \kappa <\infty$ such that for $x\in M$, $V\in T_xM\setminus \{0\}$ and $y \in T_xM$, we have
\be
\kappa^{*}F^{2}(x,y)\leq g_{V}(y,y)\leq \kappa F^{2}(x,y),  \label{Fsc}
\ee
where $g_{V}$ is the weighted Riemann metric induced by $V$ on the tangent bundle of corresponding Finsler manifolds (see (\ref{weiRiem})). If $F$ satisfies the uniform smoothness and uniform convexity, then $\Lambda$ is finite with
\be
1\leq \Lambda \leq \min\{\sqrt{\kappa},\sqrt{1/ \kappa^*}\}.
\ee
$F$ is Riemannian if and only if $\kappa=1$ if and only if $\kappa^*=1$ (\cite{ChernShen, Ohta, Ra}). If $F$ is uniformly smooth and convex with (\ref{Fsc}), then $\left(g^{ij}\right)$ is uniformly elliptic in the sense that there exist two constants $\tilde{\kappa}=(\kappa^*)^{-1}$, $\tilde{\kappa}^*=\kappa^{-1}$ such that for $x \in M, \ \xi \in T^*_x M \backslash\{0\}$ and $\eta \in T_x^* M$, we have
\be
\tilde{\kappa}^* F^{* 2}(x, \eta) \leq g^{*i j}(x, \xi) \eta_i \eta_j \leq \tilde{\kappa} F^{* 2}(x, \eta). \label{F*sc}
\ee

For $x_1, x_2 \in M$, the distance from $x_1$ to $x_2$ is defined by
\be
d_{F}(x_{1}, x_{2}):=\inf _{\gamma} \int_{0}^{1} F(\gamma (t), \dot{\gamma}(t)) d t,
\ee
where the infimum is taken over all $C^1$ curves $\gamma:[0,1] \rightarrow M$ such that $\gamma(0)=$ $x_1$ and $\gamma(1)=x_2$. Note that $d_{F} \left(x_1, x_2\right) \neq d_{F} \left(x_2, x_1\right)$ unless $F$ is reversible.  The diameter of $M$ is defined by
\be
{\rm Diam}(M):=\sup _{x_{1}, x_{2} \in M} \{d_{F}(x_{1}, x_{2})\}.
\ee

Now we define the forward and backward geodesic balls of radius $R$ with center at $x_0\in M$ by
$$
B^{+}_{R}(x_0):=\{z\in M \ |\ d_{F}(x_0,z)<R\},\ \ \ B^{-}_{R}(x_0):=\{z \in M \ | \ d_{F}(z,x_0)<R\}.
$$
In the following, we will denote $B_{R}:=B^{+}_{R}(x_0)$ for some $x_{0}\in M$ for simplicity.

In this paper, we are mainly going to derive the elliptic $p$-Harnack inequality on forward complete Finsler metric measure spaces under the conditions that the weighted Ricci curvature ${\rm Ric}_{\infty}$ has non-positive lower bound and $|\tau|\leq ar+b$, where $\tau$ denotes the distortion of the Finsler metrics, $a,b$ are some non-negative constants, $r=d(x_0,x)$ is the distance function starting from some point $x_{0} \in M$. Then we will discuss some important applications of the elliptic $p$-Harnack inequality. Our first main result is given by the following theorem.

\begin{thm}\label{Harnack}
Let $(M, F, m)$ be an $n$-dimensional forward complete Finsler measure space with finite reversibility $\Lambda$. Assume that ${\rm Ric}_{\infty}\geq -K$ for some $K\geq 0$ and $|\tau|\leq ar+b$, where $a,b$ are some non-negative constants, $r=d_{F}(x_{0},x)$ is the distance function. Fix a constant $p\in (0,\infty)$.  If $u$ is a positive harmonic function on $B_R$, then for any $\delta \in (0,1)$, there exists a positive constant $C=C\left(n, \delta, b, p, \Lambda \right)$ depending on $n, \delta, b, p$ and $\Lambda$, such that
\begin{equation}
\sup\limits_{B_{\delta R}}u^p \leq e^{C(1+aR+KR^2)}\inf\limits_{B_{\delta R}}u^p.\label{harnackp}
\end{equation}
Here,  $B_{\delta R}$ is a concentric ball of radius $\delta R$ with $B_{R}$.
\end{thm}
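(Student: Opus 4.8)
The plan is to run Moser's iteration scheme for the (nonlinear, possibly non-reversible) Finsler Laplacian, feeding the local uniform Poincar\'e inequality and the mean value inequality established earlier in the paper into a Bombieri--Giusti-type lemma. It suffices to prove the case $p=1$: once we know $\sup_{B_{\delta R}}u\le e^{C_1(1+aR+KR^2)}\inf_{B_{\delta R}}u$ with $C_1=C_1(n,\delta,b,\Lambda)$, raising both sides to the power $p$ gives \eqref{harnackp} with $C=pC_1$, still of the required form. So fix a positive harmonic function $u$ on $B_R$. Two structural facts are used. First, $u$ is a weak subsolution of $\Delta u=0$, and more generally $u^{-\gamma}$ is a nonnegative weak subsolution for every $\gamma>0$: on $\{du\neq0\}$ the chain rule for the Finsler Laplacian gives $\Delta(u^{-\gamma})=\gamma(\gamma+1)u^{-\gamma-2}F^2(\nabla u)\ge0$, and this extends to all of $B_R$ by the $C^{1,\alpha}$-regularity of Finsler harmonic functions and a Kato-type approximation. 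Hence the mean value inequality --- which holds for any nonnegative subsolution, so for both $u$ and $u^{-1}$ --- gives, for every forward ball $B$ with $2B\subseteq B_R$, every $\sigma\in(0,1)$ and small $q>0$, a bound $\sup_{\sigma B}u\le\big(C_M(1-\sigma)^{-\nu}m(B)^{-1}\int_{B}u^{q}\,dm\big)^{1/q}$ together with its analogue for $u^{-1}$, where $C_M=C_M(n,\Lambda,b)\,e^{c(aR+KR^2)}$. Second, writing $v=\log u$ we have $\nabla v=u^{-1}\nabla u$ and $F^2(\nabla v)=u^{-2}F^2(\nabla u)$, and testing the weak harmonicity of $u$ against $\phi/u$ yields the weak identity
\begin{equation*}
\int_{B_R}d\phi(\nabla v)\,dm=\int_{B_R}\phi\,F^2(\nabla v)\,dm
\end{equation*}
for all nonnegative Lipschitz $\phi$ with compact support in $B_R$.

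I would then establish the logarithmic (BMO) estimate for $v$. Let $\eta=\eta(r)$ be a cutoff depending only on $r=d_F(x_0,\cdot)$, equal to $1$ on a forward ball $B$ of radius $\rho$, supported in $2B$, with $|\eta'|\le2/\rho$. Taking $\phi=\eta^2$ above, the left side equals $2\int\eta\,d\eta(\nabla v)\,dm=2\int\eta\,\eta'(r)\,dr(\nabla v)\,dm$; since the fundamental inequality of Finsler geometry together with $F^*(dr)=1$ and the reversibility bound $F^*(-dr)\le\Lambda$ give $|dr(\nabla v)|\le\Lambda F(\nabla v)$, Cauchy--Schwarz produces the Caccioppoli bound $\int_BF^2(\nabla v)\,dm\le C\Lambda^2\rho^{-2}m(2B)$. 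Inserting this into the local uniform Poincar\'e inequality on $B$ and using the local volume doubling of $m$ --- both valid on forward balls with $2B\subseteq B_R$ as consequences of ${\rm Ric}_\infty\ge-K$ and $|\tau|\le ar+b$ --- gives
\begin{equation*}
\frac{1}{m(B)}\int_{B}|v-v_{B}|^{2}\,dm\le\mathcal A,\qquad v_{B}:=\frac{1}{m(B)}\int_{B}v\,dm,
\end{equation*}
where $\mathcal A$ is a product of the Poincar\'e constant, the doubling ratio and $\Lambda^2$, so $\mathcal A\le e^{C(1+aR+KR^2)}$ with $C=C(n,b,\Lambda)$. By Chebyshev's inequality, $m(\{x\in B:|v-v_{B}|>s\})\le\sqrt{\mathcal A}\,m(B)/s$ for all $s>0$.

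For the endgame, fix a forward ball $B_0$ with $2B_0\subseteq B_R$ and put $\lambda=e^{v_{B_0}}$. The Bombieri--Giusti lemma (the standard terminal step of Moser's method) applied to $f=u/\lambda$ --- hypothesis (i) being the mean value inequality for the subsolution $u$, hypothesis (ii) the above Chebyshev estimate for $\log f=v-v_{B_0}$, with constant $\sim\sqrt{\mathcal A}$ --- yields $\sup_{\frac12B_0}u\le C_0^{c}\lambda$ for a universal exponent $c$, where $C_0=\max(C_M,\sqrt{\mathcal A})\le e^{C(1+aR+KR^2)}$; applied to $1/f=\lambda/u$ (again a nonnegative subsolution) it yields $\inf_{\frac12B_0}u\ge C_0^{-c}\lambda$. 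Hence $\sup_{\frac12B_0}u\le C_0^{2c}\inf_{\frac12B_0}u$. Taking $B_0$ of radius comparable to $(1-\delta)R$ and covering a forward minimal geodesic between any two points of $B_{\delta R}$ by $N=N(\delta,\Lambda)$ such half-balls, a chaining argument upgrades this to $\sup_{B_{\delta R}}u\le e^{NC'(1+aR+KR^2)}\inf_{B_{\delta R}}u$, which together with the first-paragraph reduction gives \eqref{harnackp}. (Alternatively, the lower bounds above may be obtained by applying the upper bounds to the reverse Finsler metric $\overleftarrow F$, which satisfies the same hypotheses.)

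The main difficulty is the bookkeeping of constants: one must verify that the local volume doubling ratio, the local uniform Poincar\'e constant and the mean value constant on forward balls of radius up to $R$ --- all built from Laplacian and volume comparison under ${\rm Ric}_\infty\ge-K$ and the linear distortion bound $|\tau|\le ar+b$ --- are each controlled by $e^{C(1+aR+KR^2)}$ with $C$ depending only on $n,b,\Lambda$, so that, since the Bombieri--Giusti lemma outputs a \emph{fixed} power of its input constant (the exponent in the iteration being kept fixed and the curvature and distortion entering only through the weak-$L^1$ bound for $\log u$), one lands on a single exponential of $1+aR+KR^2$ rather than an iterated one. A secondary technical point, forced by the irreversibility and the nonlinearity of $\Delta$, is the rigorous justification of the weak formulations and of the chain rule for $u^{-\gamma}$ on all of $B_R$ rather than only on $\{du\neq0\}$.
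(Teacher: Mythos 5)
Your proposal follows essentially the same route as the paper: Moser-type mean value inequalities for sub- and super-solutions (the paper's Theorem~\ref{meanineq} and Theorem~\ref{supu-1}), a Caccioppoli--Poincar\'e argument producing a weak-$L^1$ bound on $\log u$, and a Bombieri--Giusti/Saloff-Coste abstract iteration lemma (the paper's Lemma~\ref{measure}) to close the gap between small and large exponents. Your reduction to $p=1$ via $\sup u^p=(\sup u)^p$, $\inf u^p=(\inf u)^p$ is valid and simpler than the paper's explicit treatment of $p\ge 2$ and $0<p<2$; your chaining over a covering of $B_{\delta R}$ is correct but unnecessary, since the paper applies Lemma~\ref{measure} once on the concentric pair $B_{\delta R}\subset B_R$ and absorbs the volume ratio $m(B_R)/m(B_{\delta R})$ by the volume comparison theorem.

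There is, however, one concrete error. The asserted chain rule
\begin{equation*}
\Delta(u^{-\gamma})=\gamma(\gamma+1)\,u^{-\gamma-2}F^2(\nabla u)\ge 0
\end{equation*}
is false for a non-reversible Finsler metric. The covector $d(u^{-\gamma})=-\gamma u^{-\gamma-1}\,du$ is a \emph{negative} multiple of $du$, and since the Legendre transform $\mathcal L^{-1}$ is only positively 1-homogeneous one gets $\nabla(u^{-\gamma})=\gamma u^{-\gamma-1}\mathcal L^{-1}(-du)=\gamma u^{-\gamma-1}\nabla(-u)=-\gamma u^{-\gamma-1}\overleftarrow{\nabla}u$, a vector field not parallel to $\nabla u$ unless $F$ is reversible. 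Consequently $\Delta(u^{-\gamma})$ is the divergence of a field built from $\overleftarrow{\nabla}u$, and it is not expressible by a naive chain rule in $\Delta u$ and $F^2(\nabla u)$. The fix is exactly what the paper does in Theorem~\ref{supu-1}: do not claim $u^{-\gamma}$ is $\Delta$-subharmonic, but instead test the weak superharmonicity $\int d\phi(\nabla u)\,dm\ge 0$ with $\phi=u^{-2s-1}\varphi^2$ to obtain a Caccioppoli estimate and hence the mean value inequality directly for $u^{-p}$ (equivalently, run Moser iteration for $u$ in the reverse metric $\overleftarrow F$, which you also mention as an alternative). With that substitution, the rest of your outline --- including the use of $F^*(\pm dr)$ and the reversibility constant $\Lambda$ in the Caccioppoli bound for $\log u$, and the verification that the Poincar\'e constant, doubling ratio and mean value constant are all bounded by $e^{C(1+aR+KR^2)}$ --- matches the paper's argument.
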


In the following, we will give several important applications of the $p$-Harnack inequality. H\"{o}lder continuity is crucial for understanding the geometric and topological properties of manifolds. The following theorem characterizes H\"{o}lder continuity for positive harmonic functions on $B_R$.

\begin{thm}\label{Holder}
Let $(M, F, m)$ be an $n$-dimensional forward complete Finsler measure space equipped with a uniformly convex and uniformly smooth Finsler metric $F$. Assume that ${\rm Ric}_{\infty}\geq -K$ for some $K\geq 0$ and $|\tau|\leq ar+b$, where $a,b$ are some non-negative constants, $r=d_{F}(x_{0},x)$ is the distance function. If $u$ is a positive harmonic function on $B_R$, then, for any $\rho \in (0,1)$, there exist positive constants $\alpha=\alpha(n, a,b,\kappa,\kappa^*,R,K)$, such that
\begin{equation}
\sup\limits_{x_{1},x_{2}\in B_{\rho R}} \left\{\frac{|u(x_1)-u(x_2)|}{d_{F}(x_{1},x_{2})^\alpha}\right\} \leq 2^{\alpha} (1-\rho)^{-\alpha}\kappa^{\frac{\alpha}{2}}R^{-\alpha} \sup\limits_{B_R}\{u\}.\label{holder}
\end{equation}
\end{thm}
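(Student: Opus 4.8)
The plan is to derive the Hölder estimate from the $p$-Harnack inequality of Theorem~\ref{Harnack} by the standard oscillation-decay (Moser-type) argument, with the specific value $p=1$ and $\delta=\frac12$ chosen once and for all. Fix two points $x_1, x_2 \in B_{\rho R}$ and set $r := d_F(x_1, x_2)$. The key geometric observation is that, because $F$ is uniformly convex and uniformly smooth with constants $\kappa^*, \kappa$, the reversibility is finite ($\Lambda \le \sqrt{\kappa}$), so forward and backward balls are comparable, and one has room to place a forward ball $B^+_s(x_1) \subset B_R$ centered at $x_1$ whenever $s \le (1-\rho)R$; in fact one may take $s$ roughly on the order of $r$ up to the factor $(1-\rho)R$. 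Inside such a ball the hypotheses of Theorem~\ref{Harnack} apply to $u$ (which is positive harmonic on $B_R$), giving, for the concentric half-ball,
\[
\sup_{B^+_{s/2}(x_1)} u \;\le\; e^{C(1+aR+KR^2)} \inf_{B^+_{s/2}(x_1)} u,
\]
where $C=C(n,b,\Lambda)$ after fixing $p=1$, $\delta=\frac12$.

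The second step is the passage from a single Harnack inequality to geometric decay of the oscillation. Write $\omega(s) := \operatorname{osc}_{B^+_s(x_1)} u = \sup_{B^+_s(x_1)} u - \inf_{B^+_s(x_1)} u$. Applying the Harnack inequality to the two positive harmonic functions $u - \inf_{B^+_s(x_1)} u$ and $\sup_{B^+_s(x_1)} u - u$ on $B^+_s(x_1)$ and adding the resulting inequalities yields, in the usual way, a contraction
\[
\omega(s/2) \;\le\; \theta\, \omega(s), \qquad \theta = \frac{e^{C(1+aR+KR^2)}-1}{e^{C(1+aR+KR^2)}+1} \in (0,1),
\]
valid for all $s \le (1-\rho)R$. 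Iterating this $k$ times from the scale $s_0 := (1-\rho)R$ down to $s_0 2^{-k}$ gives $\omega(s_0 2^{-k}) \le \theta^k\, \omega(s_0) \le \theta^k\, \sup_{B_R} u$. Choosing $k$ so that $s_0 2^{-k}$ is comparable to $r = d_F(x_1,x_2)$ — here the uniform-smoothness constant $\kappa$ enters the conversion between the forward distance and the radius, producing the factor $\kappa^{\alpha/2}$ — and setting $\alpha := \log_2(1/\theta) > 0$, one converts $\theta^k$ into a power $(r/s_0)^\alpha$, i.e.
\[
|u(x_1) - u(x_2)| \;\le\; \omega\!\left(d_F(x_1,x_2)\right) \;\le\; \left(\frac{2\sqrt{\kappa}\,d_F(x_1,x_2)}{(1-\rho)R}\right)^{\!\alpha} \sup_{B_R} u,
\]
which is exactly \eqref{holder} after dividing by $d_F(x_1,x_2)^\alpha$. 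Note $\alpha$ depends on $\theta$, hence on $n, a, b, \kappa, \kappa^*, R, K$ through the constant $C$ and the exponent $1+aR+KR^2$, matching the statement.

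The main obstacle — and the only place where genuine care beyond the Riemannian template is needed — is the asymmetry of $d_F$: the oscillation argument must be run on forward balls $B^+_s(x_1)$, and one has to check both that Theorem~\ref{Harnack} is legitimately applicable on each such ball (its center is $x_1$, not the original $x_0$, but the distortion bound $|\tau| \le ar+b$ with $r=d_F(x_0,\cdot)$ still controls $|\tau|$ on $B^+_s(x_1) \subset B_R$ by $aR+b$, which is why the constant in \eqref{harnackp} only sees $aR$) and that $x_2$ actually lies in the relevant half-ball $B^+_{s/2}(x_1)$ for the chosen scale. Here one uses $d_F(x_1,x_2) = r$ directly for the forward containment, and $\Lambda < \infty$ only to guarantee that shrinking balls still exhaust a neighborhood of $x_1$; the explicit constants $2^\alpha (1-\rho)^{-\alpha}\kappa^{\alpha/2}R^{-\alpha}$ then fall out of bookkeeping the scale $s_0 = (1-\rho)R$ and the smoothness factor. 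No new analytic input is required — everything reduces to Theorem~\ref{Harnack} plus elementary iteration.
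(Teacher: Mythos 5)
Your overall plan — derive oscillation decay from a Harnack inequality and iterate — is the same as the paper's, and your bookkeeping of the constants is essentially fine (indeed, with balls centered at $x_1$ you could get away without the factor $\kappa^{\alpha/2}$, since $x_2\in B^+_s(x_1)$ is governed directly by the forward distance; you inserted that factor by hand rather than deriving it). The small variations in where you center the balls are cosmetic. However, there is one genuine gap that the paper goes to real lengths to avoid and that you pass over.

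You propose to apply Theorem~\ref{Harnack} to the two functions $u-\inf u$ and $\sup u - u$, calling them ``positive harmonic functions.'' The first is indeed Finsler-harmonic (adding a constant does not change $\nabla u$), but the second is \emph{not}: the Finsler Laplacian $\Delta v = \operatorname{div}_m(\nabla v)$ is a nonlinear operator, because $\nabla v = \mathcal{L}^{-1}(dv)$ is a Legendre transform and in particular $\nabla(-u)\neq -\nabla u$ unless $F$ is Riemannian. Hence $\Delta(\sup u - u)\neq 0$ in general, and Theorem~\ref{Harnack} simply does not apply to it. This is precisely why the paper introduces and uses the weighted Harnack inequality (Theorem~\ref{Harnack2}) for the \emph{linearized} operator $\Delta^{\nabla u}$: since $\nabla^{\nabla u}(\zeta(s)-u)=-\nabla^{\nabla u}u$, one has $\Delta^{\nabla u}(\zeta(s)-u)=0$ and $\Delta^{\nabla u}(u-\epsilon(s))=0$, so both functions are harmonic for this \emph{linear} weighted Laplacian, and the contraction $\omega(s/2)\leq\theta\,\omega(s)$ then follows by adding the two weighted Harnack inequalities. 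Without this step your oscillation decay is not justified. To repair the proof you must either invoke Theorem~\ref{Harnack2} exactly as the paper does, or argue separately that a Harnack inequality holds for the reverse metric $\overleftarrow{F}$ and apply it to $\sup u - u$ (which is $\overleftarrow{F}$-harmonic); you do neither, and treating $\sup u - u$ as Finsler-harmonic is the missing idea.

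Two secondary remarks: (i) your claim that $\kappa^{\alpha/2}$ ``enters the conversion between the forward distance and the radius'' is not a derivation — with balls $B^+_s(x_1)$ centered at $x_1$ the containment $x_2\in B^+_{s}(x_1)$ is immediate from $d_F(x_1,x_2)<s$ and no $\kappa$ is needed at that point; the paper's $\sqrt{\kappa}$ arises because it re-centers at the midpoint $z_0$ and must estimate the backward distance $d_F(z_0,x_1)\leq\Lambda\, d_F(x_1,z_0)\leq\sqrt{\kappa}\,d_F(x_1,z_0)$. (ii) When re-centering at $x_1$ you should note that the distortion bound $|\tau|\leq ar+b$ is relative to $x_0$, so on $B^+_s(x_1)\subset B_R$ it only gives $|\tau|\leq aR+b$; the Harnack constant on the re-centered ball then depends on $aR+b$ (through $\nu=4(n+4(aR+b))-2$), which is why $\alpha$ ends up depending on $a$, $R$, and $K$ as the statement asserts. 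You gesture at this correctly but do not track it through.
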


Further, as an application of Theorem \ref{Harnack}, we can obtain the following Liouville property for positive harmonic functions.

\begin{thm}\label{Liouville1}
Let $(M, F, m)$ be an $n$-dimensional forward complete non-compact Finsler measure space with finite reversibility $\Lambda$. Assume that ${\rm Ric}_{\infty}\geq 0$ and $|\tau|\leq b$, where $b$ is a non-negative constant. If $u$ is a positive harmonic function bounded from below on $M$, then $u$ is a constant.
\end{thm}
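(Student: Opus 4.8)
The plan is to deduce Theorem~\ref{Liouville1} directly from the $p$-Harnack inequality of Theorem~\ref{Harnack} by a standard exhaustion argument. First I would normalize the situation: since $u$ is bounded from below by some constant $c_0\in\mathbb{R}$, replace $u$ by $v:=u-\inf_M u+\varepsilon$ for an arbitrary $\varepsilon>0$; this $v$ is still harmonic (harmonicity is unaffected by adding constants, as $\Delta$ annihilates constants on a Finsler manifold), strictly positive, and satisfies $\inf_M v=\varepsilon$. Under the hypotheses ${\rm Ric}_\infty\geq 0$ and $|\tau|\leq b$ we are in the situation $K=0$, $a=0$ of Theorem~\ref{Harnack}, so for every $R>0$ and the choice $\delta=\tfrac12$, say, we get a constant $C=C(n,b,\Lambda)$ (taking $p=1$) independent of $R$ with
\begin{equation}
\sup\limits_{B_{R/2}}v \leq e^{C}\inf\limits_{B_{R/2}}v.\label{liouv-key}
\end{equation}

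Next I would let $R\to\infty$. Since $M$ is forward complete and non-compact, the forward balls $B_{R/2}(x_0)$ exhaust $M$ (every point lies in some forward ball, by forward completeness and the Hopf--Rinow theorem for Finsler manifolds). The right-hand side of \eqref{liouv-key} is bounded above by $e^{C}\inf_M v=e^{C}\varepsilon$, uniformly in $R$; hence $\sup_M v\leq e^{C}\varepsilon$. But $\sup_M v\geq \inf_M v=\varepsilon$, so $v$ takes values in $[\varepsilon,e^C\varepsilon]$. This alone does not force constancy, so the argument must be run with the dependence on $\varepsilon$ made explicit: fix a point $x\in M$; for each $R$ with $x\in B_{R/2}$ we have $v(x)\leq e^{C}\inf_{B_{R/2}}v$. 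Now here is the cleaner route: apply \eqref{liouv-key} not to $v$ but directly to $u$ shifted minimally. Set $w:=u-m_0$ where $m_0:=\inf_M u$; then $w\geq 0$ and $\inf_M w=0$, but $w$ may vanish, so instead run the Harnack estimate on $w_\varepsilon:=w+\varepsilon$ as above and conclude $u(x)-m_0+\varepsilon=w_\varepsilon(x)\leq e^{C}\inf_{B_{R/2}}w_\varepsilon\leq e^{C}(\inf_{B_{R/2}}w+\varepsilon)$. Letting $R\to\infty$ gives $\inf_{B_{R/2}}w\to \inf_M w=0$, hence $u(x)-m_0+\varepsilon\leq e^{C}\varepsilon$, i.e. $u(x)\leq m_0+(e^C-1)\varepsilon$. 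Since $\varepsilon>0$ is arbitrary and $x$ is arbitrary, $u(x)\leq m_0=\inf_M u$ for all $x$, forcing $u\equiv m_0$, a constant.

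The point I would emphasize in the write-up is that the constant $C$ in \eqref{liouv-key} genuinely does not depend on $R$: this is exactly where the hypotheses ${\rm Ric}_\infty\geq 0$ (so $K=0$) and $|\tau|\leq b$ with no linear term (so $a=0$) are used, killing the factor $e^{C(1+aR+KR^2)}$ down to $e^{C}$. If either $K>0$ or $a>0$ the exponent would blow up as $R\to\infty$ and the argument collapses, which is why the Liouville conclusion is stated only under the stronger non-negativity/boundedness hypotheses. I would also note that the choice of $p$ is immaterial — one may take $p=1$, or take $p$-th roots of \eqref{harnackp} — and that $\delta$ can be any fixed number in $(0,1)$.

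The main (and essentially only) obstacle is the soft one of handling the infimum correctly: one cannot apply Harnack to a function that may vanish, so the $\varepsilon$-perturbation and the order of limits ($R\to\infty$ first, then $\varepsilon\to 0$) must be arranged carefully, as above. The geometric input — that forward balls exhaust a forward complete non-compact manifold, so that $\inf_{B_{R/2}}w\to\inf_M w$ — is standard (Hopf--Rinow), and the analytic input is entirely contained in Theorem~\ref{Harnack}; no new estimates are needed.
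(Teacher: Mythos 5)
Your proposal is correct and follows essentially the same route as the paper's own proof: normalize $u$ by subtracting $\inf_M u$, apply Theorem~\ref{Harnack} with $a=K=0$ so the Harnack constant $e^C$ is uniform in $R$, and let $R\to\infty$ using that the forward balls exhaust $M$ and $\inf_{B_{R/2}}(u-\inf_M u)\to 0$. The one substantive difference is that the paper applies Theorem~\ref{Harnack} directly to $v=u-\inf_M u$, which need not be strictly positive a priori, whereas you insert the $\varepsilon$-shift $w_\varepsilon=u-\inf_M u+\varepsilon$ and send $\varepsilon\to 0$ after $R\to\infty$. That extra step is a genuine (if small) improvement in rigor: it guarantees the hypothesis of Theorem~\ref{Harnack} (a \emph{positive} harmonic function on $B_R$) is always satisfied, avoiding the need to argue separately that $v$ cannot vanish unless it is identically zero. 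Everything else — the role of $p$ being immaterial, the observation that $K>0$ or $a>0$ would make the exponent blow up, the use of forward completeness/Hopf--Rinow to exhaust $M$ by forward balls — matches the intent of the paper's argument.
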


Finally, we give a gradient estimate for positive harmonic functions by the Harnack inequality given in Theorem \ref{Harnack}.

\begin{thm}\label{Grad}
Let $(M, F, m)$ be an $n$-dimensional forward complete non-compact Finsler measure space equipped with a uniformly convex and uniformly smooth Finsler metric $F$. Assume that ${\rm Ric}_{\infty}\geq -K$ for some $K \geq 0$ and $|\tau|\leq ar+b$, where $a,b$ are some non-negative constants, $r=d_{F}(x_{0},x)$ is the distance function. Let $u$ be a positive harmonic function on $M$, that is, $\Delta u=0$ in a weak sense on $M$. Then there exists a positive constant $C=C\left(n, b, \kappa, \kappa^*\right)$ depending only on $n$, $b$, the uniform constants $\kappa$ and $\kappa^*$, such that
\begin{equation}\label{grad}
\max\limits_{x\in M}\{F(x, \nabla \log u(x)), F(x, \nabla(-\log u(x)))\} \leq  e^{C (1+\frac{3}{2}a+K)}\left(1+\frac{K}{16}\right)^{n+4b}.
\end{equation}
\end{thm}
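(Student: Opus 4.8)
The plan is to run the mean value inequality plus Harnack scheme of \cite{CF} with the present hypotheses ${\rm Ric}_{\infty}\geq -K$ and $|\tau|\leq ar+b$, and then to upgrade the resulting local bound to a global one. Write $w:=\log u$ and $h:=F^{2}(\cdot,\nabla w)=F^{*2}(\cdot,dw)$. On $\{du\neq0\}$ one linearises the Finsler Laplacian along $\nabla u$ (equivalently along $\nabla w$, a positive multiple of $\nabla u$, so that $g_{\nabla w}=g_{\nabla u}$) and works on the Riemannian manifold $(M,g_{\nabla u})$, whose associated weighted Laplacian $\Delta^{\nabla u}$ is uniformly elliptic by \eqref{F*sc}; harmonicity of $u$ gives the identity $\Delta^{\nabla u}w=-h$. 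Feeding this into the Bochner--Weitzenb\"{o}ck formula for ${\rm Ric}_{\infty}$ on Finsler measure spaces and using ${\rm Ric}_{\infty}\geq -K$, one obtains, on the one hand, that $F^{*2}(du)$ is a nonnegative subsolution of the uniformly elliptic operator $\Delta^{\nabla u}+2K$, and, on the other, a Li--Yau type differential inequality for $h$ of the shape $\Delta^{\nabla u}h+2g_{\nabla u}(\nabla w,\nabla h)+2Kh\geq \tfrac{2}{n}h^{2}$, up to lower-order distortion (S-curvature) terms that are controlled by $|\tau|\leq ar+b$; the favourable quadratic term is what will ultimately bound $h$ by a quantity depending only on the structural data.

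Working on a forward ball $B_{\rho}=B_{\rho}^{+}(x_{0})$, I would (i) apply the mean value inequality established earlier in the paper --- which comes from the local uniform Poincar\'{e} inequality by Moser iteration and whose constant is governed by the volume-doubling data under ${\rm Ric}_{\infty}\geq -K$, $|\tau|\leq ar+b$, hence of the form $C(n,b,\kappa,\kappa^{*})(1+cK\rho^{2})^{n+4b}e^{ca\rho}$ --- to the subsolution $F^{*2}(du)$, bounding $\sup_{B_{\rho/2}}F^{*2}(du)$ by that constant times its average over $B_{\rho}$; (ii) bound the average by a Caccioppoli inequality, testing the equation against $\varphi^{2}u$ for a Lipschitz cutoff with $\varphi\equiv1$ on $B_{\rho}$, ${\rm supp}\,\varphi\subset B_{2\rho}$, $F^{*}(d\varphi)\leq c/\rho$, and integrating by parts, which yields $\int_{B_{\rho}}F^{*2}(du)\,dm\leq (c/\rho^{2})\sup_{B_{2\rho}}u^{2}\,m(B_{2\rho})$; and (iii) invoke the Harnack inequality \eqref{harnackp} of Theorem \ref{Harnack} (with $p=2$) to replace $\sup_{B_{2\rho}}u$ by $\inf_{B_{2\rho}}u\leq\inf_{B_{\rho/2}}u$ at the cost of $e^{C(1+a\rho+K\rho^{2})}$, together with volume doubling to control $m(B_{2\rho})/m(B_{\rho})$. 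Dividing through yields, on $B_{\rho/2}$, an estimate of the form $F(\cdot,\nabla\log u)=F^{*}(du)/u\leq C(n,b,\kappa,\kappa^{*})\rho^{-1}(1+cK\rho^{2})^{(n+4b)/2}e^{C(1+a\rho+K\rho^{2})}$.

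Because $u$ is harmonic on all of $M$, this holds on every forward ball; the role of the quadratic term from the first step is to replace this \emph{average-dependent}, scale-sensitive bound by an \emph{absolute} one --- a pointwise bound on $h=F^{2}(\nabla\log u)$ in terms of $n,b,\kappa,\kappa^{*}$ and the combinations $a\rho,\ K\rho^{2}$ only, with no oscillation of $u$ surviving --- after which a fixed choice of radius at the scale dictated by $K$ and $a$ (responsible for the numerical coefficients $\tfrac32$ and $\tfrac{1}{16}$) gives the asserted bound $e^{C(1+\frac{3}{2}a+K)}(1+\frac{K}{16})^{n+4b}$ for $F(\cdot,\nabla\log u)$, uniformly over $M$. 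The companion bound on $F(\cdot,\nabla(-\log u))$ then follows with no extra work: $F(x,\nabla(-\log u))=u^{-1}F^{*}(x,-du)\leq \Lambda\,u^{-1}F^{*}(x,du)=\Lambda\,F(x,\nabla\log u)$, and $\Lambda\leq\sqrt{\kappa}$ under \eqref{Fsc}, so the factor is absorbed into $C$, which gives \eqref{grad}.

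I expect the delicate points to be the following. First, the Bochner step for ${\rm Ric}_{\infty}$: unlike the ${\rm Ric}_{N}$ ($N<\infty$) case used by C.\ Xia and Q.\ Xia in \cite{XiaC,XiaQ1}, the passage from Bochner to a usable differential inequality for $h$, and the control of the weighted-measure comparison, iteration and Poincar\'{e} constants, genuinely require $|\tau|\leq ar+b$, and this is what injects $a,b$ into the final constant --- in particular the effective-dimension shift $n\mapsto n+4b$. Second, and more delicate still, is to make the argument uniform over $M$: one must check that the relevant constants do not deteriorate as the base point recedes and the pointwise size of $\tau$ grows, which is exactly why the hypothesis is a \emph{linear} growth bound on $\tau$ and why the favourable quadratic term (rather than the average-based estimate alone) is needed to close the argument.
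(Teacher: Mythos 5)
Your steps (i)--(iii) are exactly the skeleton of the paper's proof: apply the mean value inequality to $F^{*2}(du)$, which by the integrated Bochner--Weitzenb\"{o}ck formula \eqref{BWforinf} and ${\rm Ric}_{\infty}\geq -K$ is a (weak) subsolution of $\Delta^{\nabla u}+2K$ (this is the paper's Lemma~\ref{Mean}); then a Caccioppoli estimate obtained by testing $\Delta u=0$ against $\varphi^{2}u$ gives $\int_{B_{1/4}(x)}F^{2}(\nabla u)\,dm\leq 64\,m(B_{1/2}(x))\,\bigl(\sup_{B_{1/2}(x)}u\bigr)^{2}$; and finally the Harnack inequality \eqref{harnackp} (plus the volume doubling from Theorem~\ref{Volume}) replaces $\sup u$ by $\inf u\leq u(x)$. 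Your handling of $F(\cdot,\nabla(-\log u))$ through the reversibility constant $\Lambda\leq\sqrt{\kappa}$ is a valid alternative to the paper's ``same argument with $\overleftarrow{F}$'' and is a nice touch.

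Where you go astray is the closing paragraph. No Li--Yau type quadratic term ever enters: after (i)--(iii) one already has, for each fixed $x\in M$ and fixed radius, the \emph{pointwise} bound $F^{2}(\nabla u(x))\leq C\,u(x)^{2}$, i.e.\ $F(\nabla\log u(x))\leq C$, which is already absolute and involves no oscillation of $u$. The $\rho^{-1}$ in your intermediate display does not need to be killed by a self-improving quadratic term --- it is simply evaluated at a \emph{fixed} radius, namely $R=1/4$ for the mean value inequality (producing $KR^{2}=K/16$ and the power $(1+K/16)^{n+4b}$), the cutoff at scale $1/2$ for Caccioppoli (producing the $64$), and the volume ratio $m(B_{1/2})/m(B_{1/4})\leq 2^{n+4b}e^{3a/2+K/24}$ from \eqref{volume} (producing the $\tfrac{3}{2}a$). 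So the coefficients $\tfrac{3}{2}$ and $\tfrac{1}{16}$ come from evaluating the comparison constants at $R=1/4$ and $R=1/2$, not from ``a scale dictated by $K$ and $a$''. The differential inequality $\Delta^{\nabla u}h+\dots\geq \tfrac{2}{n}h^{2}$ you propose to derive for $h=F^{2}(\nabla\log u)$ is the Cheng--Yau/Munteanu--Wang route, a genuinely different scheme, and importing it here would only complicate the Moser-iteration argument that the paper actually runs. Your worry about uniformity as the base point recedes under $|\tau|\leq ar+b$ is a reasonable one to flag, but it is not resolved in the paper by a quadratic self-improvement term; the paper resolves it (implicitly) by keeping all radii fixed at the order-one scale, so the dependence on $\tau$ enters only through the constants $a,b$ in Theorems~\ref{Laplace}, \ref{Volume}, \ref{meanineq} and \ref{Harnack}.
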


The paper is organized as follows. In Section \ref{Introd}, we will give some necessary definitions and notations. Then we will prove a Laplacian comparison theorem and a volume comparison theorem on forward complete Finsler metric measure manifolds under the condition that ${\rm Ric}_{\infty}\geq -K$ for some $K\geq 0$ and $|\tau|\leq ar+b$ in Section \ref{Volcom}. Next, we will prove a $L^p$ mean value inequality for non-negative subsolutions of a class of elliptic equations firstly, and then, we will give the proof of Theorem \ref{Harnack} in Section \ref{sharnack}. Finally, we will discuss the applications of $p$-Harnack inequality given in Theorem \ref{Harnack} and  prove Theorem \ref{Holder}, Theorem \ref{Liouville1} and Theorem \ref{Grad} in Section \ref{Appl}.

\section{Preliminaries}\label{Introd}
In this section, we briefly review some necessary definitions, notations and  fundamental results in Finsler geometry. For more details, we refer to \cite{BaoChernShen, ChernShen, Ohta1, Shen1}.

Let $M$ be an $n$-dimensional smooth manifold. A Finsler metric on manifold $M$ is a function $F: T M \longrightarrow[0, \infty)$  satisfying the following properties: (1) $F$ is $C^{\infty}$ on $TM\backslash\{0\}$; (2) $F(x,\lambda y)=\lambda F(x,y)$ for any $(x,y)\in TM$ and all $\lambda >0$; (3)  $F$ is strongly convex, that is, the matrix $\left(g_{ij}(x,y)\right)=\left(\frac{1}{2}(F^{2})_{y^{i}y^{j}}\right)$ is positive definite for any nonzero $y\in T_{x}M$. The pair $(M,F)$ is called a Finsler manifold and $g:=g_{ij}(x,y)dx^{i}\otimes dx^{j}$ is called the fundamental tensor of $F$. A non-negative function on $T^{*}M$ with analogous properties is called a Finsler co-metric. For any Finsler metric $F$, its dual metric
\be
F^{*}(x, \xi):=\sup\limits_{y\in T_{x}M\setminus \{0\}} \frac{\xi (y)}{F(x,y)}, \ \ \forall \xi \in T^{*}_{x}M. \label{co-Finsler}
\ee
is a Finsler co-metric. We define the reverse metric $\overleftarrow{F}$ of $F$ by $\overleftarrow{F}(x, y):=F(x,-y)$ for all $(x, y) \in T M$. It is easy to see that $\overleftarrow{F}$ is also a Finsler metric on $M$ and that a Finsler metric $F$ on $M$ is reversible if $\overleftarrow{F}(x, y)=F(x, y)$ for all $(x, y) \in T M$.

For a non-vanishing vector field $V$ on $M$, one can introduce the weighted Riemannian metric $g_V$ on $M$ given by
\be
g_V(y, w)=g_{ij}(x, V_x)y^i w^j  \label{weiRiem}
\ee
for $y,\, w\in T_{x}M$. In particular, $g_V(V,V)=F^2(V,V)$.

Let $(M,F)$ be a Finsler manifold of dimension $n$. The pull-back $\pi ^{*}TM$ admits a unique linear connection, which is called the Chern connection. The Chern connection $D$ is determined by the following equations
\beq
&& D^{V}_{X}Y-D^{V}_{Y}X=[X,Y], \label{chern1}\\
&& Zg_{V}(X,Y)=g_{V}(D^{V}_{Z}X,Y)+g_{V}(X,D^{V}_{Z}Y)+ 2C_{V}(D^{V}_{Z}V,X,Y) \label{chern2}
\eeq
for $V\in TM\setminus \{0\}$  and $X, Y, Z \in TM$, where
$$
C_{V}(X,Y,Z):=C_{ijk}(x,V)X^{i}Y^{j}Z^{k}=\frac{1}{4}\frac{\pa ^{3}F^{2}(x,V)}{\pa V^{i}\pa V^{j}\pa V^{k}}X^{i}Y^{j}Z^{k}
$$
is the Cartan tensor of $F$ and $D^{V}_{X}Y$ is the covariant derivative with respect to the reference vector $V$.

Given a non-vanishing vector field $V$ on $M$, the Riemannian curvature $R^V$ is defined by
$$
R^V(X, Y) Z=D_X^V D_Y^V Z-D_Y^V D_X^V Z-D_{[X, Y]}^V Z
$$
for any vector fields $X$, $Y$, $Z$ on $M$. For two linearly independent vectors $V, W \in T_x M \backslash\{0\}$, the flag curvature is defined by
$$
\mathcal{K}^V(V, W)=\frac{g_V\left(R^V(V, W) W, V\right)}{g_V(V, V) g_V(W, W)-g_V(V, W)^2}.
$$
Then the Ricci curvature is defined as
$$
\operatorname{Ric}(V):=F(x, V)^{2} \sum_{i=1}^{n-1} \mathcal{K}^V\left(V, e_i\right),
$$
where $e_1, \ldots, e_{n-1}, \frac{V}{F(V)}$ form an orthonormal basis of $T_x M$ with respect to $g_V$.

A $C^{\infty}$-curve $\gamma:[0,1] \rightarrow M$ is called a geodesic  if $F(\gamma, \dot{\gamma})$ is constant and it is locally minimizing. The exponential map $\exp_x: T_x M \rightarrow M$ is defined by $\exp_x(v)=\gamma(1)$ for $v \in T_x M$ if there is a geodesic $\gamma:[0,1] \rightarrow M$ with $\gamma(0)=x$ and $\dot{\gamma}(0)=v$. A Finsler manifold $(M, F)$ is said to be forward complete (resp. backward complete) if each geodesic defined on $[0, \ell)$ (resp. $(-\ell, 0])$ can be extended to a geodesic defined on $[0, \infty)$ (resp. $(-\infty, 0])$. We say $(M, F)$ is complete if it is both forward complete and backward complete. By Hopf-Rinow theorem on forward complete Finsler manifolds, any two points in $M$ can be connected by a minimal forward geodesic and the forward closed balls $\overline{B_R^{+}(p)}$ are compact (see \cite{BaoChernShen, Shen1}).

Let $(M, F, m)$ be an $n$-dimensional Finsler metric measure manifold with a smooth measure $m$. Write the volume form $dm$ of $m$ as $d m = \sigma(x) dx^{1} dx^{2} \cdots d x^{n}$. Define
\be\label{Dis}
\tau (x, y):=\ln \frac{\sqrt{{\rm det}\left(g_{i j}(x, y)\right)}}{\sigma(x)}.
\ee
We call $\tau$ the distortion of $F$. It is natural to study the rate of change of the distortion along geodesics. For a vector $y \in T_{x} M \backslash\{0\}$, let $\sigma=\sigma(t)$ be the geodesic with $\sigma(0)=x$ and $\dot{\sigma}(0)=y$.  Set
\be\label{S}
{\bf S}(x, y):= \frac{d}{d t}\left[\tau(\sigma(t), \dot{\sigma}(t))\right]|_{t=0}.
\ee
$\mathbf{S}$ is called the S-curvature of $F$ (\cite{ChernShen, shen}).

Let $Y$ be a $C^{\infty}$ geodesic field on an open subset $U \subset M$ and $\hat{g}=g_{Y}.$  Let
\be
d m:=e^{-\psi} {\rm Vol}_{\hat{g}}, \ \ \ {\rm Vol}_{\hat{g}}= \sqrt{{det}\left(g_{i j}\left(x, Y_{x}\right)\right)}dx^{1} \cdots dx^{n}. \label{voldecom}
\ee
It is easy to see that $\psi$ is given by
\be
\psi(x)= \ln \frac{\sqrt{\operatorname{det}\left(g_{i j}\left(x, Y_{x}\right)\right)}}{\sigma(x)} =\tau\left(x, Y_{x}\right), \label{Psi}
\ee
which is just the distortion of $F$ along $Y_{x}$ at $x\in M$ (\cite{ChernShen, Shen1}). Let $y := Y_{x}\in T_{x}M$ (that is, $Y$ is a geodesic extension of $y\in T_{x}M$). Then, by the definitions of the S-curvature, we have
\beqn
&&  {\bf S}(x, y)= Y[\tau(x, Y)]|_{x} = d \psi (y),  \\
&&  \dot{\bf S}(x, y)= Y[{\bf S}(x, Y)]|_{x} =y[Y(\psi)],
\eeqn
where $\dot{\bf S}(x, y):={\bf S}_{|m}(x, y)y^{m}$ and ``$|$" denotes the horizontal covariant derivative with respect to the Chern connection (\cite{shen, Shen1}). Further, the weighted Ricci curvatures are defined as follows (\cite{ChSh, Ohta1})
\beq
{\rm Ric}_{N}(y)&=& {\rm Ric}(y)+ \dot{\bf S}(x, y) -\frac{{\bf S}(x, y)^{2}}{N-n},   \label{weRicci3}\\
{\rm Ric}_{\infty}(y)&=& {\rm Ric}(y)+ \dot{\bf S}(x, y). \label{weRicciinf}
\eeq
We say that Ric$_N\geq K$ for some $K\in \mathbb{R}$ if Ric$_N(v)\geq KF^2(v)$ for all $v\in TM$, where $N\in \mathbb{R}\setminus \{n\}$ or $N= \infty$.

\begin{rem} On a Riemannian manifold $(M, g)$, the consideration of weighted measure of the form $\mathrm{e}^{-f} {\rm Vol}_{g}$ arises naturally in various situations, where $f$ is a smooth function and ${\rm Vol}_{g}$ is the standard volume element induced by the metric $g$.  In \cite{WeWy}, Wei-Wylie proved mean curvature and volume comparison results for Riemannian manifolds with a measure $\left(M, g, e^{-f} {\rm Vol}_{g} \right)$ when the $\infty$-Bakry-Emery Ricci tensor ${\rm Ric}_f$ is bounded from below and $f$ or $|\nabla f|$ is bounded. Later, Munteanu-Wang derived a gradient estimate for positive $f$-harmonic functions and obtain  the strong Liouville property on complete non-compact smooth metric measure space $\left(M, g, e^{-f} {\rm Vol}_{g} \right)$  with non-negative $\infty$-Bakry-Emery Ricci curvature ${\rm Ric}_f$ under the conditions that $f$ is of linear growth, $|f|(x) \leq a r(x)+b$ in \cite{MuWa1}, where $a$ and $b$ are some positive constants, $r(x):=d(x_{0}, x)$ is the geodesic distance function to a fixed point $x_{0}$ in $M$.   Also see \cite{MuWa2}.
\end{rem}

\vskip 3mm

According to Lemma 3.1.1 in \cite{Shen1}, for any vector $y\in T_{x}M\setminus \{0\}$, $x\in M$, the covector $\xi =g_{y}(y, \cdot)\in T^{*}_{x}M$ satisfies
\be
F(x,y)=F^{*}(x, \xi)=\frac{\xi (y)}{F(x,y)}. \label{shenF311}
\ee
Conversely, for any covector $\xi \in T_{x}^{*}M\setminus \{0\}$, there exists a unique vector $y\in T_{x}M\setminus \{0\}$ such that $\xi =g_{y}(y, \cdot)\in T^{*}_{x}M$. Naturally, we define a map ${\cal L}: TM \rightarrow T^{*}M$ by
$$
{\cal L}(y):=\left\{
\begin{array}{ll}
g_{y}(y, \cdot), & y\neq 0, \\
0, & y=0.
\end{array} \right.
$$
It follows from (\ref{shenF311}) that
$$
F(x,y)=F^{*}(x, {\cal L}(y)).
$$
Thus ${\cal L}$ is a norm-preserving transformation. We call ${\cal L}$ the Legendre transformation on Finsler manifold $(M, F)$.

Let
$$
g^{*kl}(x,\xi):=\frac{1}{2}\left[F^{*2}\right]_{\xi _{k}\xi_{l}}(x,\xi).
$$
For any $\xi ={\cal L}(y)$, we have
\be
g^{*kl}(x,\xi)=g^{kl}(x,y), \label{Fdual}
\ee
where $\left(g^{kl}(x,y)\right)= \left(g_{kl}(x,y)\right)^{-1}$.

Given a smooth function $u$ on $M$, the differential $d u_x$ at any point $x \in M$,
$$
d u_x=\frac{\partial u}{\partial x^i}(x) d x^i
$$
is a linear function on $T_x M$. We define the gradient vector $\nabla u(x)$ of $u$ at $x \in M$ by $\nabla u(x):=\mathcal{L}^{-1}(d u(x)) \in T_x M$. In a local coordinate system, we can express $\nabla u$ as
\be \label{nabna}
\nabla u(x)= \begin{cases}g^{* i j}(x, d u) \frac{\partial u}{\partial x^i} \frac{\partial}{\partial x^j}, & x \in M_u, \\ 0, & x \in M \backslash M_u,\end{cases}
\ee
where $M_{u}:=\{x \in M \mid d u(x) \neq 0\}$ (\cite{Shen1}). In general, $\nabla u$ is only continuous on $M$, but smooth on $M_{u}$.

The Hessian of $u$ is defined by using Chern connection as
$$
\nabla^2 u(X, Y)=g_{\nabla u}\left(D_X^{\nabla u} \nabla u, Y\right).
$$
One can show that $\nabla^2 u(X, Y)$ is symmetric (\cite{Ohta3, WuXin}).

Let $W^{1, p}(M)(p \geq 1)$ be the space of functions $u \in L^p(M)$ with $\int_M[F(\nabla u)]^p d m+\int_M[\overleftarrow{F}(\overleftarrow{\nabla} u)]^p d m<\infty$ and $W_0^{1, p}(M)$ be the closure of $\mathcal{C}_0^{\infty}(M)$ under the (absolutely homogeneous) norm
\be
\|u\|_{W^{1, p}(M)}:=\|u\|_{L^p(M)}+\frac{1}{2}\|F(\nabla u)\|_{L^p(M)}+\frac{1}{2}\|\overleftarrow{F}(\overleftarrow{\nabla} u)\|_{L^p(M)},
\ee
where $\mathcal{C}_0^{\infty}(M)$ denotes the set of all smooth compactly supported functions on M and $\overleftarrow{\nabla} u$ is the gradient of $u$ with respect to the reverse metric $\overleftarrow{F}$. In fact $\overleftarrow{F}(\overleftarrow{\nabla} u)=F(\nabla(-u))$.

Now, we decompose the volume form $d m$ of $m$ as $d m=\mathrm{e}^{\Phi} d x^1 d x^2 \cdots d x^n$. Then the divergence of a differentiable vector field $V$ on $M$ is defined by
$$
\operatorname{div}_m V:=\frac{\partial V^i}{\partial x^i}+V^i \frac{\partial \Phi}{\partial x^i}, \quad V=V^i \frac{\partial}{\partial x^i}.
$$
One can also define $\operatorname{div}_m V$ in the weak form by following divergence formula
$$
\int_M \phi \operatorname{div}_m V d m=-\int_M d \phi(V) d m
$$
for all $\phi \in \mathcal{C}_0^{\infty}(M)$.  Then the Finsler Laplacian $\Delta u$ is defined by
\be
\Delta u:=\operatorname{div}_m(\nabla u). \label{Lap}
\ee
From (\ref{Lap}), Finsler Laplacian is a nonlinear elliptic differential operator of the second order.
Further, noticing that $\nabla u$ is weakly differentiable, the Finsler Laplacian should be understood in the weak sense, that is, for $u \in W^{1,2}(M)$, $\Delta u$ is defined by
\be
\int_M \phi \Delta u d m:=-\int_M d \phi(\nabla u) dm  \label{Lap1}
\ee
for $\phi \in \mathcal{C}_0^{\infty}(M)$ (\cite{Ohta1,Shen1}).

Given a weakly differentiable function $u$ and a vector field $V$ which does not vanish on $M_u$, the weighted Laplacian of $u$ on the weighted Riemannian manifold $\left(M, g_V, m\right)$ is defined by
\be
\Delta^{V} u:= {\rm div}_{m}\left(\nabla^V u\right),
\ee
where
$$
\nabla^V u:= \begin{cases}g^{ij}(x, V) \frac{\partial u}{\partial x^i} \frac{\partial}{\partial x^j} & \text { for } x \in M_u, \\ 0 & \text { for } x \notin M_u .\end{cases}
$$
Similarly, the weighted Laplacian should be understood in the weak sense. We note that $\nabla^{\nabla u}u=\nabla u$ and $\Delta^{\nabla u} u=$ $\Delta u$. Moreover, it is easy to see that $\Delta u= {\rm tr}_{\nabla u} \nabla^2 u-{\bf S}(\nabla u)$ on $M_u$ (\cite{Ohta1, WuXin}).

The following Bochner-Weitzenb\"{o}ck formulas on Finsler manifolds given by Ohta-Sturm  are important for our discussions.

\begin{thm} {\rm (Pointwise Bochner-Weitzenb\"{o}ck Formula  \cite{Ohta1,Ohta3})} \label{PBW}
For $u \in \mathcal{C}^\infty(M)$, we have
 \be
 \Delta^{\nabla u}\left(\frac{F^{2}(x,\nabla u)}{2}\right)-d(\Delta u)(\nabla u)=\mathrm{Ric}_{\infty}(\nabla u)+\left\|\nabla^{2}u\right\|_{{\rm HS}(\nabla u)}^{2} \label{PBW1}
 \ee
 on $M_u$. Here $\left\| \ \cdot \ \right\|_{{\rm HS}(\nabla u)}$ denotes the Hilbert-Schmidt norm with respect to $g_{\nabla u}$.
\end{thm}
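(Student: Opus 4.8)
The plan is to reduce the gradient bound to the $L^{p}$ mean value inequality for non-negative subsolutions established in Section~\ref{sharnack}, fed by the Harnack inequality of Theorem~\ref{Harnack} and the volume comparison of Section~\ref{Volcom}. First I would dispose of the reverse-gradient term: since $\overleftarrow{\nabla}\log u=-\nabla(-\log u)$, one has $F(x,\nabla(-\log u))=\overleftarrow{F}(x,\overleftarrow{\nabla}\log u)=F^{*}(x,-d\log u)\leq\Lambda\,F^{*}(x,d\log u)=\Lambda\,F(x,\nabla\log u)$, and $\Lambda\leq\max\{\sqrt\kappa,1/\sqrt{\kappa^{*}}\}$ depends only on $\kappa,\kappa^{*}$; hence it suffices to bound $F(x,\nabla\log u)$ for an arbitrary $x\in M$. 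For this I would work on the forward ball $B:=B^{+}_{2R}(x)$ with $R$ a universal constant (this choice of $R$, together with the radius ratios entering the Harnack and Caccioppoli steps, is exactly what produces the numbers $\tfrac32$ and $\tfrac1{16}$ in \eqref{grad}).

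The key structural fact is that $P:=F^{2}(\nabla u)$ is a non-negative subsolution of a uniformly elliptic equation with \emph{no} drift term. Applying the pointwise Bochner--Weitzenb\"ock formula \eqref{PBW1} to the harmonic function $u$ and using $\Delta u=0$ (so that $d(\Delta u)(\nabla u)=0$) gives, on $M_{u}$,
\be
\tfrac12\,\Delta^{\nabla u}P=\mathrm{Ric}_{\infty}(\nabla u)+\big\|\nabla^{2}u\big\|_{{\rm HS}(\nabla u)}^{2}\ \geq\ -K\,P,
\ee
and since $P$ is continuous, non-negative, and vanishes on $M\setminus M_{u}$, it is a global weak subsolution of $\Delta^{\nabla u}P+2KP\geq0$, whose operator is uniformly elliptic by \eqref{F*sc}. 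Applying the mean value inequality of Section~\ref{sharnack} (with exponent $2$) to $P$ on $B$ then yields
\be
\sup_{B^{+}_{R}(x)}F^{2}(\nabla u)\ \leq\ \frac{C_{1}}{m\big(B^{+}_{2R}(x)\big)}\int_{B^{+}_{2R}(x)}F^{2}(\nabla u)\,dm,
\ee
with $C_{1}$ depending on $n,\kappa,\kappa^{*}$ and on the local uniform Poincar\'e and volume doubling constants supplied by Section~\ref{Volcom}.

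It remains to estimate the right-hand integral and to bring in the Harnack inequality. Testing $\Delta u=0$ against $u\phi^{2}$ for a cutoff $\phi$ with $\phi\equiv1$ on $B^{+}_{2R}(x)$, $\mathrm{supp}\,\phi\subset B^{+}_{4R}(x)$ and $F^{*}(d\phi)\leq C/R$, and absorbing the cross term via \eqref{F*sc} and Young's inequality, gives $\int_{B^{+}_{2R}(x)}F^{2}(\nabla u)\,dm\leq(C/R^{2})\int_{B^{+}_{4R}(x)}u^{2}\,dm\leq(C/R^{2})\big(\sup_{B^{+}_{4R}(x)}u\big)^{2}m\big(B^{+}_{4R}(x)\big)$. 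Now Theorem~\ref{Harnack} (with $p=1$ and a suitable $\delta$) bounds $\sup_{B^{+}_{4R}(x)}u\leq e^{C(1+aR+KR^{2})}u(x)$, and the volume comparison of Section~\ref{Volcom} bounds $m(B^{+}_{4R}(x))/m(B^{+}_{2R}(x))$ by a factor of the form $(1+c_{1}K)^{\,n+c_{2}b}e^{c_{3}aR}$. Combining the three displays and dividing by $u(x)^{2}$ gives
\be
F^{2}\!\big(x,\nabla\log u(x)\big)=\frac{F^{2}(\nabla u)(x)}{u(x)^{2}}\ \leq\ \frac{C}{R^{2}}\,e^{C(1+aR+KR^{2})}(1+c_{1}K)^{\,n+c_{2}b};
\ee
choosing $R$ to be the universal constant that normalizes the exponents, taking square roots, multiplying by $\Lambda$, and taking the supremum over $x\in M$ then yields \eqref{grad}.

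Two points will require real care. The minor one is regularity: $u$ is only $C^{1,\alpha}$ and $\nabla u$ degenerates on $M\setminus M_{u}$, so the Bochner inequality and the mean value inequality must be used in weak form, or after an $\varepsilon$-regularization $\sqrt{F^{2}(\nabla u)+\varepsilon}$ and a limit, exactly as in the proof of Theorem~\ref{Harnack}. The genuine obstacle is obtaining \emph{every} constant above independently of the base point $x$: since $\tau$ grows only linearly and is in particular unbounded, one must verify that the distortion enters the Caccioppoli estimate, the mean value inequality, and the volume comparison on $B^{+}_{4R}(x)$ solely through the variation of $\tau$ across a ball of fixed radius together with the growth rate $a$, so that its contribution is the $x$-free factors $e^{C\cdot\frac32 a}$ and $(1+K/16)^{\,n+4b}$ rather than anything involving $d_{F}(x_{0},x)$. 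Making this precise is where the hypothesis $|\tau|\leq ar+b$ (as opposed to $|\tau|\leq b$) is really used, and it is the heart of the argument.
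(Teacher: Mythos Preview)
Your proposal does not address the stated theorem at all. The statement you were asked to prove is the \emph{pointwise Bochner--Weitzenb\"ock formula} (Theorem~\ref{PBW}), which in this paper is quoted as a known result from \cite{Ohta1,Ohta3} and carries no proof; the identity \eqref{PBW1} is a local computation on $M_u$ involving the Chern connection, the weighted Laplacian, and the definition of $\mathrm{Ric}_\infty$. What you have written instead is a sketch of the proof of Theorem~\ref{Grad} (the gradient estimate for positive harmonic functions), in which the Bochner formula is merely one ingredient. So as a proof of the stated theorem this is simply off target.

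If your intention was in fact to prove Theorem~\ref{Grad}, then your outline is essentially the paper's own argument: apply \eqref{PBW1} to get $\Delta^{\nabla u}F^{2}(\nabla u)\geq -2K\,F^{2}(\nabla u)$ weakly, feed this into the mean value inequality (Lemma~\ref{Mean}) on a ball of fixed radius, bound $\int F^{2}(\nabla u)$ by $\int u^{2}$ via a Caccioppoli estimate, invoke the volume comparison (Theorem~\ref{Volume}) and the Harnack inequality (Theorem~\ref{Harnack}), and divide by $u(x)^{2}$. The paper carries this out with the specific choices $R=\tfrac14$, $\delta=\tfrac12$, which is what generates the constants $\tfrac32 a$ and $K/16$. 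Two remarks on your sketch in that case: your treatment of the reverse-gradient term via $\Lambda$ is not how the paper proceeds (it simply notes $F(\nabla(-\log u))=\overleftarrow{F}(\overleftarrow{\nabla}\log u)$ and repeats the argument for $\overleftarrow{F}$), and your closing worry about base-point dependence is somewhat misplaced---the volume comparison, Poincar\'e, Sobolev, and mean value inequalities in Sections~\ref{Volcom} and~\ref{sharnack} are already stated for balls centered at an arbitrary point with constants depending only on $n,b,\Lambda$ (or $\kappa,\kappa^{*}$) and on $aR$, $KR^{2}$; no dependence on $d_F(x_0,x)$ enters once $R$ is fixed.
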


\begin{thm}{\rm (Integrated Bochner-Weitzenb\"{o}ck Formula \cite{Ohta1,Ohta3})} \label{BW}
For $u \in W_{\mathrm{loc}}^{2,2}(M) \bigcap \mathcal{C}^1(M)$ with $\Delta u \in W_{\mathrm{loc}}^{1,2}(M)$, we have
\beq
&& -\int_M d \phi\left(\nabla^{\nabla u}\left[\frac{F^2(x, \nabla u)}{2}\right]\right) d m  \nonumber \\
&& \ \ =\int_M \phi \left\{d \left(\Delta u\right)(\nabla u)+{\rm Ric}_{\infty}(\nabla u)+\left\|\nabla^2 u\right\|_{H S(\nabla u)}^2\right\} dm \label{BWforinf}
\eeq
for all non-negative functions $\phi \in W_0^{1,2}(M) \bigcap L^{\infty}(M)$.
\end{thm}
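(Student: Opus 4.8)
The plan is to bound $F(x,\nabla u)/u(x)$ at an arbitrary point $x\in M$ by combining a Bochner-type mean value inequality for $F^2(\nabla u)$ with a Caccioppoli estimate and the $p$-Harnack inequality of Theorem~\ref{Harnack}. Two reductions come first. Since $d(\log u)=u^{-1}du$ and the Legendre transform is positively $1$-homogeneous, $\nabla\log u=u^{-1}\nabla u$, so $F(x,\nabla\log u)=u^{-1}F^*(x,du)=u^{-1}F(x,\nabla u)$; likewise $F(x,\nabla(-\log u))=u^{-1}F^*(x,-du)\le\Lambda\,u^{-1}F^*(x,du)=\Lambda\,F(x,\nabla\log u)$ by the definition of the reversibility constant, and under uniform convexity and smoothness $\Lambda\le\min\{\sqrt\kappa,\sqrt{1/\kappa^*}\}$. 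Hence it suffices to produce a bound on $F(x,\nabla u)/u(x)$ uniform in $x\in M$ and absorb $\Lambda$ into the constant. Fix $x_0\in M$; since the target is invariant under $u\mapsto cu$ we normalize $u(x_0)=1$. Recall also that a weakly harmonic $u$ is locally in $W^{2,2}\cap\mathcal{C}^1$ and smooth on $M_u$, so Theorem~\ref{BW} applies.

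\textbf{Step 1 (a Bochner subsolution and its mean value bound).} Put $w:=\tfrac12 F^2(x,\nabla u)$. Applying Theorem~\ref{BW} with $\Delta u=0$, discarding the non-negative Hilbert--Schmidt term, and using ${\rm Ric}_\infty(\nabla u)\ge-KF^2(\nabla u)=-2Kw$ gives, for all non-negative $\phi\in W_0^{1,2}(M)\cap L^\infty(M)$,
\[
-\int_M d\phi\bigl(\nabla^{\nabla u}w\bigr)\,dm\;\ge\;-2K\int_M\phi\,w\,dm,
\]
i.e.\ $w$ is a weak subsolution of $\Delta^{\nabla u}v+2Kv=0$ on $M$ (the set $M\setminus M_u$ handled in the usual way). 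Because $F$ is uniformly convex and smooth, $\bigl(g^{ij}(x,\nabla u)\bigr)$ is uniformly elliptic by (\ref{F*sc}), and by the volume comparison theorem of Section~\ref{Volcom} (where $|\tau|\le ar+b$ enters) the measure $m$ is volume doubling on forward balls with constants independent of the centre. The $L^p$ mean value inequality of Section~\ref{sharnack}, applied to $w$ with the zeroth order term $2K$, then yields, on concentric forward balls,
\[
\sup_{B_{R/4}(x_0)}w\;\le\;\frac{C_1}{m\bigl(B_{R/2}(x_0)\bigr)}\int_{B_{R/2}(x_0)}w\,dm,\qquad C_1=C_1\bigl(n,\kappa,\kappa^*,aR,KR^2,b\bigr).
\]

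\textbf{Step 2 (Caccioppoli, Harnack, assembly).} Testing $\Delta u=0$ against $\phi^2u$, where $\phi=\eta\bigl(d_F(x_0,\cdot)\bigr)$ is a cutoff equal to $1$ on $B_{R/2}(x_0)$, supported in $B_{3R/4}(x_0)$ with $F(\nabla\phi)\le C/R$, and using $du(\nabla u)=F^2(\nabla u)$ together with the ellipticity estimate $|d\phi(\nabla u)|\le(\kappa^*)^{-1/2}F(\nabla\phi)F(\nabla u)$, one gets $\int_{B_{R/2}(x_0)}F^2(\nabla u)\,dm\le\tfrac{C}{R^2}\int_{B_{3R/4}(x_0)}u^2\,dm$. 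By Theorem~\ref{Harnack} with $p=2$, $\delta=\tfrac34$ on $B_R(x_0)$, $\sup_{B_{3R/4}(x_0)}u\le e^{C_H(1+aR+KR^2)}\inf_{B_{3R/4}(x_0)}u\le e^{C_H(1+aR+KR^2)}u(x_0)=e^{C_H(1+aR+KR^2)}$, so $\int_{B_{3R/4}(x_0)}u^2\,dm\le e^{2C_H(1+aR+KR^2)}m\bigl(B_{3R/4}(x_0)\bigr)$. Combining with Step~1 and bounding $m\bigl(B_{3R/4}(x_0)\bigr)/m\bigl(B_{R/2}(x_0)\bigr)$ by the doubling constant of Section~\ref{Volcom} gives
\[
F^2(x_0,\nabla u)=2w(x_0)\;\le\;\frac{C(n,\kappa,\kappa^*,b)}{R^2}\,e^{C(1+aR+KR^2)}\bigl(1+c\,KR^2\bigr)^{n+4b},
\]
the exponent $n+4b$ coming from the distortion term in the volume comparison. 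Fixing $R$ to a universal value (choosing $R$ of order one, e.g.\ $R=\tfrac32$, makes $aR=\tfrac32a$ explicit and turns $KR^2$ into a constant multiple of $K$), absorbing the fixed powers of $R$ into $C$, taking square roots, and invoking the reversibility reduction for the $-\log u$ direction yields the stated bound
\[
\max_{x\in M}\{F(x,\nabla\log u(x)),\,F(x,\nabla(-\log u(x)))\}\le e^{C(1+\frac32a+K)}\Bigl(1+\frac{K}{16}\Bigr)^{n+4b}.
\]

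\textbf{Main obstacle.} The chain above is a routine assembly of Theorems~\ref{BW} and~\ref{Harnack} with the comparison and mean value results of Sections~\ref{Volcom}--\ref{sharnack}; the delicate points are (i) verifying that $w=\tfrac12F^2(\nabla u)$ — only Lipschitz, with $\Delta^{\nabla u}$ having merely measurable coefficients across $\{\nabla u=0\}$ — genuinely falls within the scope of the mean value inequality of Section~\ref{sharnack}, and that the Moser iteration survives the zeroth order perturbation by $2K$; and (ii) keeping all constants uniform in the base point $x_0$ and tracking the dependence of the Moser and volume-doubling constants on $a,b,K$ precisely enough that, after fixing $R$, one lands exactly on the exponents $\tfrac32a+K$ and $n+4b$.
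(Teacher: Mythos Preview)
There is a mismatch between the statement and your proposal. The statement you were given is Theorem~\ref{BW}, the integrated Bochner--Weitzenb\"ock formula, which the paper does \emph{not} prove: it is quoted from Ohta--Sturm \cite{Ohta1,Ohta3} as a known tool. Your proposal makes no attempt to establish this identity; instead it is a proof of Theorem~\ref{Grad}, the gradient estimate for positive harmonic functions.

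Read as a proof of Theorem~\ref{Grad}, your argument is correct and follows essentially the same route as the paper's. Both begin by using Theorem~\ref{BW} (with $\Delta u=0$ and ${\rm Ric}_\infty\ge-K$) to show that $F^2(\nabla u)$ is a weak subsolution of $\Delta^{\nabla u}v+2Kv=0$, then apply a Moser-type mean value inequality---your Step~1 is exactly Lemma~\ref{Mean}. Both then run a Caccioppoli estimate (testing $\Delta u=0$ against $\phi^2u$), invoke the volume comparison of Theorem~\ref{Volume} to control the ratio of ball volumes, and finish with the Harnack inequality of Theorem~\ref{Harnack} to replace $\sup u$ by $u(x_0)$. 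The paper fixes the nested radii as $\tfrac18,\tfrac14,\tfrac12,1$ from the start rather than carrying a generic $R$ and specializing later, but this is purely cosmetic. Two minor remarks: (i)~the coefficient $\tfrac32a$ in the paper arises from the volume comparison factor $e^{3ar_2}$ at $r_2=\tfrac12$, not from choosing $R=\tfrac32$ as you suggest; (ii)~for the reverse direction $F(\nabla(-\log u))$, you use the reversibility bound $\Lambda\le\sqrt\kappa$, whereas the paper simply repeats the argument for $\overleftarrow F$---either works.
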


\section{Volume comparison theorem}\label{Volcom}
Let $(M, F, m)$ be an $n$-dimensional Finsler manifold with a smooth measure $m$ and $x \in M$.
For a unit vector $v \in T_x M$, let $\rho(v):=\sup \left\{t>0 \mid \text{the geodesic} \ \exp _x(tv) \ \text{is minimal} \right\}$. If $\rho(v)<\infty$, we call $\exp _x\left(\rho(v) v\right)$ a cut point of $x$. The set of all the cut points of $x$ is said to be the cut locus of $x$, denoted by $Cut(x)$. The cut locus of $x$ always has null measure and $d_{x}:= d_{F} (x, \cdot)$ is $C^1$ outside the cut locus of $x$.
 Let $\mathcal{D}_x:=M \backslash(\{x\} \cup C u t(x))$ be the cut-domain on $M$. For any $z \in \mathcal{D}_x$, we can choose the geodesic polar coordinates $(r, \theta)$ centered at $x$ for $z$ such that $r(z)=F(x, v)$ and $\theta^{\alpha}(z)=\theta^{\alpha}\left(\frac{v}{F(x,v)}\right)$, where $r(z)=d_{F}(x, z)$  and $v=\exp _x^{-1}(z) \in T_x M \backslash\{0\}$. It is well known that the distance function $r$ starting from $x \in M$ is smooth on $\mathcal{D}_x$ and $F(z, \nabla r(z))=1$ (\cite{BaoChernShen,Shen1}).  A basic fact is that the distance function $r=d_{F}(x, \cdot)$ satisfies the following (\cite{Shen1, WuXin})
\be
\nabla r |_{z}= \frac{\pa}{\pa r}|_{z}.
\ee
By Gauss's lemma, the unit radial coordinate vector $\frac{\partial }{{\partial r}}$ and the coordinate vectors $\frac{\partial }{{\partial {\theta ^\alpha }}}$ for $1\leq \alpha \leq n-1$ are mutually vertical with respect to $g_{\nabla r}$ (Lemma 6.1.1 in \cite{BaoChernShen}).
Further, we can write the volume form at $z=\exp _{x}(r\xi)$  with $v=r \xi$ as $\left.dm\right|_{\exp _x(r \xi)}=\sigma(x, r, \theta) dr d\theta$, where $\xi \in I_{x}:=\left\{\xi \in T_x M \mid F(x, \xi)=1\right\}$.
Then, for forward geodesic ball $B_{R}=B_R^{+}(x)$ of radius $R$ at the center $x \in M$, the volume of $B_R$ is
$$
m(B_R)=\int_{B_R} d m=\int_{B_R \cap \mathcal{D}_x} d m=\int_0^{R} dr \int_{\mathcal{D}_x(r)} \sigma(x, r, \theta) d\theta,
$$
where $\mathcal{D}_x(r)=\left\{\xi \in I_x \mid r \xi \in \exp_x^{-1}\left(\mathcal{D}_x\right)\right\}$.
Obviously, for any $0<s<t<R$, $\mathcal{D}_x(t)\subseteq\mathcal{D}_x(s)$.
Besides, by the definition of Laplacian, we have(\cite{Shen1,WuXin})
\be
\Delta r=\frac{\partial}{\partial r}\log\sigma.\label{lap}
\ee

We first give the  following Laplacian comparison theorem for distance functions.

\begin{thm}\label{Laplace}
Let $(M,F,m)$ be an $n$-dimensional forward complete Finsler measure space. Assume that ${\rm Ric}_{\infty}\geq -K$ for some $K\geq 0$ and $|\tau|\leq ar+b$, where $a,b$ are some non-negative constants, $r=d_{F}(x_0,x)$ is the distance function. Then we have
$$
\Delta r \leq \left( n-1+4b\right) \frac{1}{r}+3a+\frac{K}{3}r.
$$
\end{thm}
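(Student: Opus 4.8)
\section*{Proof plan for Theorem \ref{Laplace}}

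The plan is to run a Riccati-type comparison along minimal geodesics issuing from $x_0$, but without ever trying to control the S-curvature $\mathbf{S}(\nabla r)$ pointwise, since the hypothesis $|\tau|\le ar+b$ does not allow that. Instead $\mathbf{S}(\nabla r)$ will be carried inside the inequality until the very end, where a single integration by parts converts the distortion bound into the stated estimate on $\Delta r$.

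Fix $z\in\mathcal{D}_{x_0}$ and let $\gamma\colon[0,r(z)]\to M$ be the unique unit-speed minimal geodesic from $x_0$ to $z$. Set $m(t):=\Delta r|_{\gamma(t)}$ and $h(t):=\tau(\gamma(t),\dot\gamma(t))$, so that $|h(t)|\le at+b$ and, by the very definition of the S-curvature, $h'(t)=\mathbf{S}(\nabla r)|_{\gamma(t)}$. On $\mathcal{D}_{x_0}$ one has $F(\cdot,\nabla r)\equiv1$, so $\Delta^{\nabla r}\!\bigl(F^2(\nabla r)/2\bigr)=0$, and the pointwise Bochner--Weitzenb\"ock formula (Theorem \ref{PBW}) applied to $u=r$ reduces to $-m'(t)=\mathrm{Ric}_\infty(\nabla r)+\|\nabla^2 r\|_{HS(\nabla r)}^2$ along $\gamma$. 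Since $\nabla r=\dot\gamma$ is a geodesic field, $\nabla^2 r(\nabla r,\cdot)=0$, so $\nabla^2 r$ acts on the $(n-1)$-dimensional $g_{\nabla r}$-orthogonal complement of $\nabla r$; Cauchy--Schwarz there gives $\|\nabla^2 r\|_{HS(\nabla r)}^2\ge (\mathrm{tr}_{\nabla r}\nabla^2 r)^2/(n-1)$, and by the identity $\Delta u=\mathrm{tr}_{\nabla u}\nabla^2 u-\mathbf{S}(\nabla u)$ we have $\mathrm{tr}_{\nabla r}\nabla^2 r=m(t)+h'(t)$. Using $\mathrm{Ric}_\infty(\nabla r)\ge -K$ this yields the basic differential inequality
\[
m'(t)+\frac{\bigl(m(t)+h'(t)\bigr)^2}{n-1}\ \le\ K,\qquad 0<t\le r(z).
\]

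Now substitute $m(t)=\frac{n-1}{t}+p(t)$ and write $q:=p+h'$. Since $\bigl(\tfrac{n-1}{t}+q\bigr)^2/(n-1)=\tfrac{n-1}{t^2}+\tfrac{2q}{t}+\tfrac{q^2}{n-1}$, the $\pm(n-1)/t^2$ terms cancel; multiplying by $t^2$ and using $t^2p'+2tp=(t^2p)'$ turns the inequality into
\[
(t^2p)'+\frac{t^2(p+h')^2}{n-1}\ \le\ Kt^2-2t\,h'(t).
\]
Discard the nonnegative middle term and integrate from $0$ to $r:=r(z)$: the boundary contribution $t^2p(t)=t^2m(t)-(n-1)t$ tends to $0$ as $t\to0^+$ because $\Delta r=\frac{n-1}{r}+O(r)$ near $x_0$, which follows from the local expansion of the volume density $\sigma(x_0,t,\theta)$. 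Hence $r^2p(r)\le\frac{K}{3}r^3-2\int_0^r t\,h'(t)\,dt$. An integration by parts gives $\int_0^r t\,h'(t)\,dt=rh(r)-\int_0^r h(t)\,dt$, and $|h(r)|\le ar+b$ together with $\bigl|\int_0^r h(t)\,dt\bigr|\le\int_0^r(at+b)\,dt=\frac{a}{2}r^2+br$ yields $-2\int_0^r t\,h'(t)\,dt\le3ar^2+4br$. Therefore $p(r)\le\frac{K}{3}r+3a+\frac{4b}{r}$ and
\[
\Delta r|_z=m(r)=\frac{n-1}{r}+p(r)\ \le\ (n-1+4b)\frac{1}{r}+3a+\frac{K}{3}r,
\]
which is the assertion, since $z\in\mathcal{D}_{x_0}$ was arbitrary and the cut locus has measure zero, so the bound extends in the weak/barrier sense.

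The one genuinely delicate point is the one flagged above: with only $\mathrm{Ric}_\infty$ bounded below and no pointwise control on $\mathbf{S}(\nabla r)=h'$, one cannot fall back on passing from $\mathrm{Ric}_\infty$ to $\mathrm{Ric}_N$. The device of multiplying by $t^2$ to produce the exact derivative $(t^2p)'$, throwing away the square term, and then integrating by parts is precisely what converts the admissible information $|\tau|\le ar+b$ into the bound, and it is where the constants $4b$, $3a$, $\frac{K}{3}$ originate. The remaining items are routine: Theorem \ref{PBW} is a local (pointwise) identity, so it applies along $\gamma$ inside $\mathcal{D}_{x_0}$ where $r$ is smooth and $\nabla r\ne0$, and the near-$x_0$ asymptotics of $m(t)$ are standard.
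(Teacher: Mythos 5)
Your proposal is correct and follows essentially the same strategy as the paper's proof: both start from the Bochner--Weitzenb\"ock identity and the trace Cauchy--Schwarz bound to get the Riccati inequality $\partial_r(\Delta r)\le -\tfrac{1}{n-1}(\Delta r+\mathbf S(\nabla r))^2+K$, multiply by $t^2$ to produce the exact derivative $(t^2\Delta r)'$ (your substitution $m=\tfrac{n-1}{t}+p$ followed by $(t^2p)'$ is the same computation the paper does by completing a square in $2r\Delta r - \tfrac{r^2}{n-1}(\Delta r+\mathbf S)^2$), drop the nonnegative square, integrate to $r$, and then integrate the term $-2\int_0^r t\,\mathbf S\,dt$ by parts against $\tau$ to extract the constants $3a$, $4b$, $\tfrac{K}{3}$. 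One trivial slip: the near-origin expansion is $\Delta r=\tfrac{n-1}{r}+O(1)$ rather than $O(r)$, but this still kills the boundary term $t^2p(t)\to 0$, so the argument goes through unchanged.
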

\begin{proof}
Let $\gamma: [0, r]\rightarrow M$ be the minimizing geodesic from $\gamma(0)=x_0$ to $\gamma(r)=x$.
By using the Bochner-Weitzenb\"{o}ck formula (\ref{PBW1}), we have
\be
0=\Delta^{\nabla r}\left[\frac{F^2(\nabla r)}{2}\right]= {\rm Ric}_{\infty}(\nabla r)+d(\Delta r)(\nabla r)+\|\nabla^{2}r\|_{{\rm HS}(\nabla r)}^2. \label{1}
\ee
Because $\nabla r$ is a geodesic field of $F$, we have
$$
\nabla^{2}r(\nabla r, V)=g_{\nabla r}\left(D_{\nabla r}^{\nabla r}\nabla r, V\right)=0
$$
for any $V \in T_{x}M$. Hence, we can obtain the following
\be
||\nabla^2r||_{{\rm HS}(\nabla r)}^2\ge\frac{1}{n-1}\left(tr_{\nabla r}\left(\nabla^2r\right)\right)^2=\frac{1}{n-1}(\Delta r+\mathbf{S}(\nabla r))^2,\label{2}
\ee
where $tr_{\nabla r}\left(\nabla^2r\right)$ denotes the trace of $\nabla^2r$ with respect to $g_{\nabla r}$. Thus, by the assumption that $\mathrm{Ric}_{\infty}\geq-K$, and combining (\ref{1}) and (\ref{2}), we get
\[
\frac{\partial(\Delta r)}{\partial r}\leq -\frac{1}{n-1}\left(\Delta r+\mathbf{S}(\nabla r)\right)^{2}+K.
\]
Then, we consider the formula
\begin{eqnarray*}
\frac{\partial(r^{2}\Delta r)}{\partial r}& =&2r\Delta r+r^{2}\frac{\partial(\Delta r)}{\partial r} \\
&\leq& 2r\Delta r-\frac{r^{2}}{n-1}(\Delta r+\mathbf{S}(\nabla r))^{2}+Kr^{2} \\
&=& -\left(\frac{r(\Delta r+\mathbf{S}(\nabla r))}{\sqrt{n-1}}-\sqrt{n-1}\right)^{2}+(n-1)-2r\mathbf{S}(\nabla r)+Kr^{2} \\
&\leq& (n-1)-2r\mathbf{S}(\nabla r)+Kr^{2}.
\end{eqnarray*}
Integrating it along $\gamma$ from $0$ to $r$ yields
\begin{eqnarray*}
r^{2}\Delta r &\leq& (n-1)\int_{0}^{r}dt-2\int_{0}^{r}t \ \mathbf{S}(\gamma(t),\dot{\gamma}(t))dt+\int_{0}^{r}Kt^{2}dt\\
&\leq &(n-1)r+2r(ar+b)+\int_{0}^{r}2(at+b)dt+\frac{K}{3}r^{3} \\
&=& (n-1+4b)r+3ar^{2}+\frac{K}{3}r^{3}.
\end{eqnarray*}
where we have used (\ref{S}) and $|\tau|\leq ar+b$ in the second inequality. It means that
$$\Delta r\leq(n-1+4b)\frac{1}{r}+3a+\frac{K}{3}r.$$
This completes the proof.
\end{proof}

From Theorem \ref{Laplace}, we get the following volume comparison theorem.

\begin{thm}\label{Volume}
Let $(M,F,m)$ be an $n$-dimensional forward complete Finsler measure space. Assume that ${\rm Ric}_{\infty}\geq -K$ for some $K\geq 0$ and $|\tau|\leq ar+b$, where $a,b$ are some non-negative constants, $r=d_{F}(x_0,x)$ is the distance function. Then, along any minimizing geodesic starting from the center $x_0$ of $B_R(x_0)$, we have the following for any $0<r_1<r_2<R$,
\be
\frac{m(B_{r_2}(x_0))}{m(B_{r_1}(x_0))}\leq \left(\frac{r_2}{r_1}\right)^{n+4b}e^{3ar_{2}+\frac{K}{6}r^{2}_{2}}. \label{volume}
\ee
\end{thm}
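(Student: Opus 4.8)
The plan is to run the classical Bishop--Gromov argument in geodesic polar coordinates, feeding into it the pointwise Laplacian comparison of Theorem \ref{Laplace}; since the analytic content is already contained in that theorem, what remains is bookkeeping with the polar volume element and with the cut locus. Throughout I fix the center $x_0$, write $r=d_F(x_0,\cdot)$, use the polar decomposition $dm|_{\exp_{x_0}(r\xi)}=\sigma(x_0,r,\theta)\,dr\,d\theta$ with $\xi\in I_{x_0}$, and recall from (\ref{lap}) that $\Delta r=\partial_r\log\sigma$.

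First I would record, for each fixed direction $\xi$ (with polar angle $\theta$) and each radius $r$ for which the geodesic $\exp_{x_0}(r\xi)$ is still minimizing, the differential inequality
\[
\partial_r\log\sigma(x_0,r,\theta)=\Delta r\le(n-1+4b)\,r^{-1}+3a+\tfrac{K}{3}\,r,
\]
which is exactly Theorem \ref{Laplace}. Integrating it from $r_1'$ to $r_2'$, for any $0<r_1'<r_2'<R$ in the minimizing range, and exponentiating gives the pointwise density comparison
\[
\sigma(x_0,r_2',\theta)\le\Bigl(\tfrac{r_2'}{r_1'}\Bigr)^{n-1+4b}e^{\,3a(r_2'-r_1')+\frac{K}{6}\bigl((r_2')^2-(r_1')^2\bigr)}\,\sigma(x_0,r_1',\theta).
\]

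Next I would pass from densities to the area function $A(t):=\int_{\mathcal D_{x_0}(t)}\sigma(x_0,t,\theta)\,d\theta$, so that $m(B_t(x_0))=\int_0^{t}A(s)\,ds$. Set $\lambda:=r_2/r_1\ge1$. For $0<s<r_1$ one has $\lambda s<r_2<R$, and the monotonicity $\mathcal D_{x_0}(\lambda s)\subseteq\mathcal D_{x_0}(s)$ of the cut-domain (noted just before Theorem \ref{Laplace}) together with the density comparison applied to $r_1'=s$, $r_2'=\lambda s$ — absorbing the exponential factor into $e^{3ar_2+\frac{K}{6}r_2^2}$ since $\lambda s\le r_2$ — yields
\[
A(\lambda s)\le\lambda^{\,n-1+4b}\,e^{\,3ar_2+\frac{K}{6}r_2^{2}}\,A(s),\qquad 0<s<r_1.
\]
Finally, the change of variables $t=\lambda s$ in the volume integral gives
\[
m(B_{r_2}(x_0))=\int_0^{r_2}A(t)\,dt=\lambda\int_0^{r_1}A(\lambda s)\,ds\le\lambda^{\,n+4b}\,e^{\,3ar_2+\frac{K}{6}r_2^{2}}\int_0^{r_1}A(s)\,ds,
\]
and since $\int_0^{r_1}A(s)\,ds=m(B_{r_1}(x_0))$ this is precisely (\ref{volume}).

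I expect the only delicate point to be the cut-locus step: one must make sure that the shrinking of $\mathcal D_{x_0}(t)$ in $t$ is exactly what legitimizes the area comparison, given that $\sigma$ need not be smooth — and may jump down to $0$ — across $\operatorname{Cut}(x_0)$. This, however, is the familiar subtlety already present in the Riemannian Bishop--Gromov theorem and is handled in the same way; all other steps are routine integration in polar coordinates, now merely carrying the extra factors $r^{4b}$ and $e^{3ar+\frac{K}{6}r^{2}}$ produced by the distortion bound $|\tau|\le ar+b$ and the curvature bound ${\rm Ric}_{\infty}\ge-K$.
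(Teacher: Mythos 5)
Your proof is correct and, while it rests on the same Laplacian comparison (Theorem~\ref{Laplace}) and the same pointwise density estimate as the paper, the passage from densities to volumes is organized differently. The paper takes the density inequality in the product form $\sigma(x_0,t,\theta)\,s^{n-1+4b}\leq t^{n-1+4b}\sigma(x_0,s,\theta)\,e^{3ar_2+\frac{K}{6}r_2^2}$ and integrates twice, first in $t$ over $[r_1,r_2]$ and then in $s$ over $[0,r_1]$, arriving at the difference bound $m(B_{r_2})-m(B_{r_1})\leq\bigl[(r_2/r_1)^{n+4b}-1\bigr]e^{3ar_2+\frac{K}{6}r_2^2}m(B_{r_1})$ before rearranging to the ratio form. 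You instead fix the scaling factor $\lambda=r_2/r_1$, prove the area comparison $A(\lambda s)\leq\lambda^{n-1+4b}e^{3ar_2+\frac{K}{6}r_2^2}A(s)$ for the spherical area function $A(t)=\int_{\mathcal D_{x_0}(t)}\sigma\,d\theta$, and then obtain the volume bound by a single change of variables $t=\lambda s$ in $\int_0^{r_2}A(t)\,dt$. Both approaches use the monotonicity of the cut-domain $\mathcal D_{x_0}(\lambda s)\subseteq\mathcal D_{x_0}(s)$ at exactly the same place, so the cut-locus subtlety you flag is handled identically. Your route is marginally slicker: it delivers the ratio bound in one stroke and makes the homogeneity under rescaling of the radius transparent, whereas the paper's double integration makes the intermediate difference estimate explicit, which can be convenient if one wants the annulus bound $m(B_{r_2}\setminus B_{r_1})$ directly.
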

\begin{proof}
Let $\gamma:[0,r]\rightarrow M$ be the minimizing geodesic from $\gamma(0)=x_0$ to $\gamma(r)=x$. By using the geodesic polar coordinates $(r,\theta)$ centered at $x_0$ for $x$ and by (\ref{lap}), the Laplacian of the distance function $r$ satisfies
$$\Delta r=\frac{\partial}{\partial r}\log\sigma(x_0,r,\theta).$$
By Theorem \ref{Laplace}, we have
$$
\frac{\partial}{\partial r}\log\sigma(x_0,r,\theta)\leq(n-1+4b)\frac{1}{r}+3a+\frac{K}{3}r.
$$
For $0<r_1<r_2<R$, integating this from $r_1$ to $r_2$ yields
$$\int_{r_1}^{r_2}\frac{\partial}{\partial r}\log\sigma(x_0,r,\theta)dr\leq\int_{r_1}^{r_2}\left(\Big(n-1+4b\Big)\frac{1}{r}+3a+\frac{K}{3}r\right)dr,$$
which implies that
$$\frac{\sigma\left(x_0,r_2,\theta\right)}{\sigma\left(x_0,r_1,\theta\right)}\leq\left(\frac{r_2}{r_1}\right)^{n-1+4b}e^{3a\left(r_2-r_1\right)+\frac{K}{6}\left(r_2^2-r_1^2\right)}.$$
Further, for any $0<s<r_1<t<r_2<R$, we have
\beqn
\sigma\left(x_0,t,\theta\right)s^{n-1+4b}& \leq & t^{n-1+4b}\sigma\left(x_0,s,\theta\right)e^{3a(t-s)+\frac{K}{6}\left(t^{2}-s^{2}\right)} \\
&\leq& t^{n-1+4b}\sigma\left(x_0,s,\theta\right)e^{3ar_{2}+\frac{K}{6}r_{2}^{2}}.
\eeqn
Now, integrating it in $t$ from $r_1$ to $r_2$, we get
$$
s^{n-1+4b}\int_{r_1}^{r_2}\sigma\left(x_0,t,\theta\right)dt\leq\sigma\left(x_0,s,\theta\right)\frac{r_2^{n+4b}-r_1^{n+4b}}{n+4b}e^{3ar_2+\frac{K}{6}r_2^2}.
$$
Then, integrating it on both sides of above inequality with respect to $s$ from $0$ to $r_1$ yields
$$\frac{r_{1}^{n+4b}}{n+4b}\int_{r_{1}}^{r_{2}}\sigma\left(x_0,t,\theta\right)dt\leq\frac{r_{2}^{n+4b}-r_{1}^{n+4b}}{n+4b}e^{3ar_{2}+\frac{K}{6}r_{2}^{2}}\int_{0}^{r_{1}}\sigma\left(x_0,s,\theta\right)ds,$$
from which we obtain the following
$$\int_{r_1}^{r_2}dt\int_{\mathcal{D}_{x_0}(t)}\sigma(x_0,t,\theta)d\theta\leq\left[\left(\frac{r_2}{r_1}\right)^{n+4b}-1\right]e^{3ar_2+\frac{K}{6}r_2^2}\int_0^{r_1}ds\int_{\mathcal{D}_{x_0}(s)}\sigma(x_0,s,\theta)d\theta,$$
which means
$$m\left(B_{r_2}(x_0)\right)-m\left(B_{r_1}(x_0)\right)\leq\left[\left(\frac{r_2}{r_1}\right)^{n+4b}-1\right]e^{3ar_2+\frac{K}{6}r_2^2}m\left(B_{r_1}(x_0)\right).$$
Therefore, it follows that
$$\frac{m\left(B_{r_2}(x_0)\right)}{m\left(B_{r_1}(x_0)\right)}\leq\left(\frac{r_2}{r_1}\right)^{n+4b}e^{3ar_2+\frac{K}{6}r_2^2}.$$
This finishes the proof of Theorem \ref{Volume}.
\end{proof}

\begin{rem}
By (\ref{volume}), we can get the following volume comparison
\be
\frac{m(B_{r_2}(x_0))}{m(B_{r_1}(x_0))}\leq \left(\frac{r_2}{r_1}\right)^{n+4b}e^{3aR+\frac{K}{6}R^{2}}, \label{doubvol}
\ee
which implies the  volume doubling property of $(M, F, m)$, that is, there is a uniform constant $D_{0}$ such that $m(B_{2r}(x_0))\leq D_{0} m(B_{r}(x_0))$ for any $x_0 \in M$ and $0< r < R/2$.
\end{rem}

\section{Harnack inequality}\label{sharnack}

In this section, we will give the $p$-Harnack inequality for harmonic functions. For this aim, we first show  a $L^p$ mean value inequality for non-negative subsolutions of a class of elliptic equations.

As we know, C. Xia proved a local uniform Poincar\'{e} inequality under the condition that ${\rm Ric}_{N}\geq -K$ for $N \in [n,\infty)$ and $K > 0$ in \cite{XiaC}. Cheng-Feng got a local uniform Poincar\'{e} inequality under the conditions that ${\rm Ric}_{\infty} \geq K$ and $|\tau|\leq k$ for some $K \in \mathbb{R} $ and $k>0$ in \cite{CF}.  By the similar arguments, we can get the following local uniform Poincar\'{e} inequality under the conditions that ${\rm Ric}_{\infty}\geq -K$ and $|\tau|\leq ar+b$ for some non-negative constants $K$ and  $a,b$.

\begin{prop}\label{Poincare}
Let $(M,F,m)$ be an $n$-dimensional forward complete Finsler measure space with finite reversibility $\Lambda$. Assume that ${\rm Ric}_{\infty}\geq -K$ for some $K\geq 0$ and $|\tau|\leq ar+b$, where $a,b$ are some non-negative constants, $r=d(x_0,x)$ is the distance function. Then, there exist positive constants $c_1=c_1(n, b,\Lambda)$ and $c_2=c_2(\Lambda)$ depending only on $n$, $b$ and the reversibility $\Lambda$ of $F$, such that
\be
\int_{B_R}|u-\bar{u}|^2 dm\leq c_1e^{c_2(aR+KR^2)}R^2\int_{B_R}F^{*2}(du)dm\label{poin2}
\ee
for $u\in W^{1,2}_{\rm loc}(M)$, where $\bar{u}:=\frac{1}{m(B_R)}\int_{B_R} u\ dm$.
\end{prop}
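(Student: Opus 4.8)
The plan is to follow the classical argument for the weak $L^2$-Poincar\'e inequality on geodesic balls (cf.\ \cite{SC}), adapted to the Finsler setting as in \cite{CF,XiaC}, with the comparison theorems of Section \ref{Volcom} as the only geometric input. Since $\overline{B_R^+}$ is compact by Hopf--Rinow, any $u\in W^{1,2}_{\mathrm{loc}}(M)$ restricts to an element of $W^{1,2}(B_R)$ and is approximated there in norm by functions in $\mathcal{C}^\infty(M)$; because $\xi\mapsto F^{*2}(x,\xi)$ is continuous and, over the compact set $\overline{B_R^+}$, two-sidedly comparable to the Euclidean squared norm, both sides of (\ref{poin2}) pass to the limit, so it suffices to prove the inequality for $u\in\mathcal{C}^\infty(M)$. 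Writing $u(x)-\overline u=\frac{1}{m(B_R)}\int_{B_R}\big(u(x)-u(y)\big)\,dm(y)$ and applying Jensen's inequality,
\[
\int_{B_R}|u-\overline u|^2\,dm\;\le\;\frac{1}{m(B_R)}\int_{B_R}\!\int_{B_R}|u(x)-u(y)|^2\,dm(x)\,dm(y).
\]
By Hopf--Rinow every pair $(x,y)\in B_R\times B_R$ is joined by a unit-speed minimizing geodesic $\gamma_{xy}\colon[0,\ell_{xy}]\to M$ with $\ell_{xy}=d_F(x,y)$; since $d_F(x,y)\le d_F(x,x_0)+d_F(x_0,y)\le\Lambda\,d_F(x_0,x)+d_F(x_0,y)<(1+\Lambda)R$, every point of $\gamma_{xy}$ lies in $B_{(2+\Lambda)R}$. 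From $|du(\dot\gamma_{xy})|\le F^*(du)\,F(\dot\gamma_{xy})=F^*(du)$ and Cauchy--Schwarz,
\[
|u(x)-u(y)|^2\;\le\;\ell_{xy}\int_0^{\ell_{xy}}F^{*2}(du)\big(\gamma_{xy}(t)\big)\,dt\;\le\;(1+\Lambda)R\int_0^{\ell_{xy}}F^{*2}(du)\big(\gamma_{xy}(t)\big)\,dt.
\]

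The technical heart is a segment-type inequality: with $g:=F^{*2}(du)\ge0$ there is a constant $C=C(n,b,\Lambda)$ such that
\[
\int_{B_R}\!\int_{B_R}\!\int_0^{\ell_{xy}}g\big(\gamma_{xy}(t)\big)\,dt\,dm(x)\,dm(y)\;\le\;C\,e^{c_2(aR+KR^2)}\,R\,m(B_R)\int_{B_{(2+\Lambda)R}}g\,dm.
\]
To prove this one fixes one endpoint, splits each geodesic at its midpoint, passes to geodesic polar coordinates $(r,\theta)$ centred at that endpoint with $dm=\sigma(\cdot,r,\theta)\,dr\,d\theta$ as in Section \ref{Volcom}, and carries out a Fubini interchange between the arclength parameter $t$ and the radial coordinate of the free endpoint; this reduces the inner integral to a one-dimensional weighted estimate for the density $\sigma$ along each geodesic ray. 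The Laplacian comparison Theorem \ref{Laplace} --- equivalently the volume comparison Theorem \ref{Volume} and the induced volume doubling property --- bounds every Jacobian ratio that arises: the hypothesis $|\tau|\le ar+b$ enters through the effective exponent $n-1+4b$ and the factors $e^{3ar+\frac{K}{6}r^2}$, and since all radii are $\le(2+\Lambda)R$ these yield $C=C(n,b,\Lambda)$ and the exponential factor $e^{c_2(aR+KR^2)}$ with $c_2=c_2(\Lambda)$. For the half of $\gamma_{xy}$ near the second endpoint one repeats the argument with the reverse metric $\overleftarrow{F}$ (for which that half is a minimizing geodesic), comparing $\overleftarrow{F}$- and $F$-geodesic balls at the cost of the reversibility constant. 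Substituting the two displayed estimates and then invoking a standard self-improvement (Jerison's lemma, which applies since $(M,F,m)$ is doubling by Theorem \ref{Volume}) to replace $B_{(2+\Lambda)R}$ on the right-hand side by $B_R$ produces (\ref{poin2}) with $c_1$ and $c_2$ of the stated form.

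I expect the segment-type inequality to be the main obstacle. The delicate issue is the behaviour of $\sigma$ near the pole and near the cut locus: the crude pointwise bound $|u(x)-u(y)|\le\int_{\gamma_{xy}}F^*(du)$ is, by itself, too lossy --- the minimizing geodesics from a fixed point cluster near each endpoint --- so the Fubini step must be arranged so that $g$ is integrated against $\sigma$ with the correct weight, in effect averaging over balls of comparable radius rather than over single points, and this is exactly where the $\frac{1}{r}$ and $r$ terms in Theorem \ref{Laplace} are used essentially. A second, more bookkeeping-type, difficulty is the irreversibility of $F$: one must keep forward and backward geodesic balls separate throughout, pass to $\overleftarrow{F}$ on one half of each geodesic, and track how $\Lambda$ enters $c_1$ and $c_2$. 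The remaining ingredients --- the reduction to smooth $u$, the Jensen and Cauchy--Schwarz estimates, and the doubling-based passage from the enlarged ball to $B_R$ --- are standard.
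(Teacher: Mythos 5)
The paper does not actually prove Proposition~\ref{Poincare}; it states it and defers to ``similar arguments'' in \cite{XiaC} and \cite{CF}, so there is no in-paper proof against which to check you line by line. Your sketch follows the standard Buser-type / segment-inequality route to a weak $L^2$-Poincar\'e inequality (Jensen to reduce to a double integral, Cauchy--Schwarz along minimizing geodesics, a Fubini exchange in polar coordinates after a midpoint split, the reverse metric $\overleftarrow{F}$ for the half of each geodesic nearest the second endpoint, and a doubling-based self-improvement from $B_{(2+\Lambda)R}$ to $B_R$). This is the right framework and is the same circle of ideas the citations rely on, so the plan is sound; you are also candid that the segment-type inequality is only outlined, not carried out, so what you have is a correct plan rather than a finished argument.

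There is, however, one step you invoke as if it were automatic and it is not: Theorems~\ref{Laplace} and \ref{Volume} control $\Delta r$ and the Jacobian $\sigma(x_0,r,\theta)$ \emph{only for the distance from the fixed base point $x_0$}, because the hypothesis $|\tau|\le a\,d_F(x_0,\cdot)+b$ is anchored at $x_0$ and is not translation invariant. Your Fubini step uses polar coordinates centred at an arbitrary $z\in B_R$ and needs the ratio $\sigma_z(\rho,\xi)/\sigma_z(t,\xi)$, so you need a version of the comparison at the shifted centre $z$. Along a unit-speed minimizing geodesic $\gamma$ issuing from $z$ one only has $|\tau(\gamma(t),\dot\gamma(t))|\le a\big(d_F(x_0,z)+t\big)+b\le a(R+t)+b$; retracing the integration by parts in the proof of Theorem~\ref{Laplace} then gives
\begin{equation*}
\Delta r_z \;\le\; \frac{n-1+4b+4aR}{r_z}+3a+\frac{K}{3}\,r_z,\qquad r_z:=d_F(z,\cdot),
\end{equation*}
so the effective polynomial exponent is $n-1+4b+4aR$, not the $n-1+4b$ you quoted. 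After the midpoint split ($t\ge\rho/2$, $\rho\le(1+\Lambda)R$) the extra term contributes a factor $(\rho/t)^{4aR}\le 2^{4aR}=e^{(4\ln 2)\,aR}$ to the Jacobian ratio, which is still of the admissible form $e^{c_2 aR}$ with $c_2$ a pure constant, so your final constants survive --- but this shifted-centre comparison has to be established, not merely quoted from Section~\ref{Volcom}. The remaining ingredients (density reduction, the $\Lambda$ bookkeeping, and the doubling-based upgrade from the enlarged ball to $B_R$) are fine.
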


As Saloff-Coste mentioned that a Poincar\'{e} inequality and the volume doubling property of the measure imply a family of local Sobolev inequalities on Riemannian manifolds (\cite{SC}).  Based on the Theorem \ref{Volume} and  Poincar\'{e} inequality (\ref{poin2}),  we can drive the following local uniform Sobolev inequality. The proof mainly follows the arguments of Theorem 4.3 in \cite{CF}.

\begin{prop}\label{Sobolev}
Let $(M,F,m)$ be an $n$-dimensional forward complete Finsler measure space with finite reversibility $\Lambda$. Assume that ${\rm Ric}_{\infty}\geq -K$ for some $K\geq 0$ and $|\tau|\leq ar+b$, where $a,b$ are some non-negative constants, $r=d_{F}(x_{0},x)$ is the distance function. Then, there exist positive constants $\nu (n,b)>2$ and $c=c(n, b,\Lambda)$ depending only on $n$, $b$ and the reversibility $\Lambda$ of $F$, such that
\be
\left(\int_{B_R}|u-\bar{u}|^{\frac{2\nu}{\nu-2}}dm\right)^{\frac{\nu-2}{\nu}}\leq e^{c(1+aR+KR^2)}R^2m(B_R)^{-\frac{2}{\nu}}\int_{B_R}F^{*2}(du)dm \label{sob1}
\ee
for $u\in W^{1,2}_{\rm loc}(M)$, where  $\nu=4(n+4b)-2$. Further,
\be
\left(\int_{B_R}|u|^{\frac{2\nu}{\nu-2}}dm\right)^{\frac{\nu-2}{\nu}}\leq e^{c(1+aR+KR^2)}R^2m(B_R)^{-\frac{2}{\nu}}\int_{B_R}(F^{*2}(du)+R^{-2}u^2)dm. \label{sob2}
\ee
\end{prop}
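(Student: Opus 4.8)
The plan is to follow the route of Theorem 4.3 in \cite{CF}, which is the classical principle of Saloff-Coste \cite{SC}: a volume doubling property together with a scale-invariant $L^2$-Poincar\'{e} inequality on the balls contained in $B_R$ forces an $L^2$-Sobolev inequality on $B_R$. The doubling input is already available: by (\ref{doubvol}) (a consequence of Theorem \ref{Volume}), along minimizing geodesics from $x_0$ one has $m(B_{r_2}(x_0))/m(B_{r_1}(x_0))\le (r_2/r_1)^{n+4b}e^{3aR+\frac{K}{6}R^2}$, so the relevant doubling dimension is $Q:=n+4b$, and one sets $\nu:=4(n+4b)-2$, which is $>2$ (any exponent exceeding $Q$ works in this machinery; this explicit choice is the one recorded in the statement). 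The Poincar\'{e} input is obtained by applying Proposition \ref{Poincare} --- whose proof is local and scale-invariant in nature --- to every forward ball $B_s(y)$ with $B_s(y)\subseteq B_{2R}(x_0)$: since ${\rm Ric}_\infty\ge -K$ throughout $M$ and $|\tau|\le 2aR+b$ on $B_{2R}(x_0)$, this yields
\[
\int_{B_s(y)}|f-f_{B_s(y)}|^2\,dm\le e^{c(1+aR+KR^2)}\,s^2\int_{B_s(y)}F^{*2}(df)\,dm,
\]
where $f_{B_s(y)}$ is the $m$-mean of $f$ over $B_s(y)$ and $c=c(n,b,\Lambda)$; the contributions of the linear growth of $\tau$ and of the lower Ricci bound $-K$ are absorbed into the single prefactor $e^{c(1+aR+KR^2)}$.

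With these two ingredients in place I would run the standard argument. First, telescoping the above Poincar\'{e} inequality along the dyadic chain of balls $B_{2^{-j}s}(y)$, $j\ge 0$, shrinking to a Lebesgue point $x$ of $u$, and using doubling to compare successive averages, produces for $m$-a.e.\ $x\in B_s(y)$ a Riesz-potential (fractional maximal function) bound on $|u(x)-u_{B_s(y)}|$ in terms of $\int F^{*2}(du)$. Second, combining this pointwise control with the lower volume bound supplied by doubling and with the Maz'ya-type truncation argument of \cite{SC} (applied level set by level set to the functions $(|f|-t)_+$) upgrades the resulting weak estimate to the strong Sobolev inequality
\[
\Big(\int_{B_R}|f|^{\frac{2\nu}{\nu-2}}\,dm\Big)^{\frac{\nu-2}{\nu}}\le e^{c(1+aR+KR^2)}\,R^2\,m(B_R)^{-\frac{2}{\nu}}\int_{B_R}F^{*2}(df)\,dm
\]
for all $f\in\mathcal C_0^\infty(B_R)$, with $\nu=4(n+4b)-2$ and $c=c(n,b,\Lambda)$; the exponent and the constant are precisely what the doubling and Poincar\'{e} constants feed into the iteration. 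Applying this to $u-\bar u$ (after one further use of the Poincar\'{e} inequality to handle the lack of compact support) gives (\ref{sob1}).

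The passage from (\ref{sob1}) to (\ref{sob2}) is elementary: write $u=(u-\bar u)+\bar u$ and bound $\|u-\bar u\|_{L^{2\nu/(\nu-2)}(B_R)}$ by (\ref{sob1}); for the constant $\bar u$, Cauchy--Schwarz gives $|\bar u|\le m(B_R)^{-1/2}\|u\|_{L^2(B_R)}$, so with $q=\frac{2\nu}{\nu-2}$ and $\frac1q-\frac12=-\frac1\nu$,
\[
\|\bar u\|_{L^q(B_R)}=|\bar u|\,m(B_R)^{1/q}\le m(B_R)^{-1/\nu}\|u\|_{L^2(B_R)}\le m(B_R)^{-1/\nu}\,R\Big(\int_{B_R}\big(F^{*2}(du)+R^{-2}u^2\big)\,dm\Big)^{1/2},
\]
where the last step uses $\|u\|_{L^2(B_R)}^2=R^2\int_{B_R}R^{-2}u^2\,dm$. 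Adding the two contributions and squaring yields (\ref{sob2}) with a new constant $c=c(n,b,\Lambda)$.

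The main obstacle is the abstract implication ``doubling $+$ Poincar\'{e} $\Rightarrow$ Sobolev'' --- the telescoping and Maz'ya truncation machinery of \cite{SC} --- together with the bookkeeping that guarantees that all dependence on $a$, $K$ and $R$ is confined to the prefactor $e^{c(1+aR+KR^2)}$ with $c$ depending only on $n,b,\Lambda$; here one crucially exploits that on $B_{2R}(x_0)$ the distortion is in fact bounded, by $2aR+b$, so that the $R$-dependence of the Poincar\'{e} and doubling constants is no worse than the recorded exponential type. A secondary technical point is to justify that Proposition \ref{Poincare}, although stated for the single ball $B_R$, actually delivers the Poincar\'{e} inequality on the whole family of forward sub-balls of $B_{2R}(x_0)$ that the iteration consumes; this is immediate from the scale-invariant character of its proof.
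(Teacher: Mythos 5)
Your proposal follows exactly the route the paper itself indicates: it invokes the Saloff-Coste principle (volume doubling from Theorem \ref{Volume} plus the scale-invariant $L^2$-Poincar\'{e} inequality of Proposition \ref{Poincare} on sub-balls imply a local Sobolev inequality), cites the argument of Theorem 4.3 in \cite{CF}, fixes $\nu=4(n+4b)-2$ as a safe exponent above the doubling dimension $Q=n+4b$, and derives (\ref{sob2}) from (\ref{sob1}) by the elementary decomposition $u=(u-\bar u)+\bar u$ together with Cauchy--Schwarz on the mean. This is the same approach as the paper, and the bookkeeping you flag (that the $R$-dependence of the Poincar\'{e} constant on sub-balls of $B_{2R}$ is absorbed into the prefactor $e^{c(1+aR+KR^2)}$, using $|\tau|\le 2aR+b$ there) is precisely the point that needs care and you have identified it correctly.
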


In the following, we will show how a localized Sobolev inequality implies certain $L^{p}$ mean value inequalities for non-negative subsolutions of a class of elliptic equations by using Moser iteration.

\begin{thm}\label{meanineq}
Let $(M, F, m)$ be an $n$-dimensional forward complete Finsler measure space with finite reversibility $\Lambda$. Assume that ${\rm Ric}_{\infty}\geq -K$ for some $K\geq 0$ and $|\tau|\leq ar+b$, where $a,b$ are some non-negative constants, $r=d(x_0,x)$ is the distance function. Suppose that $u$ is a non-negative function defined on $B_R$ satisfying
$$
\Delta u \geq -fu
$$
in the weak sense, where $f\in L^{\infty}(B_R)$ is non-negative. Then for any $p \in(0,\infty)$ and $\delta \in(0,1)$, there are constants $\nu=4(n+4b)-2 >2$ and $C=C(n, b, p, \Lambda)>0$ depending on $n, b, p$ and  $\Lambda$, such that
\begin{equation}\label{meanineq-1}
\sup _{B_{\delta R}} u^p \leq e^{C(1+aR+KR^2)}(1+\mathcal{A}R^2)^{\frac{\nu}{2}} (1-\delta)^{-\nu} m\left(B_R\right)^{-1}  \int_{B_R} u^{p} dm,
\end{equation}
where $\mathcal{A}:=\sup\limits_{B_R}|f|$.
\end{thm}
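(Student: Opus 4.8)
The plan is to prove \eqref{meanineq-1} by a Moser iteration built on the local Sobolev inequality \eqref{sob2}: the range $p>1$ will come directly from the iteration, and the range $0<p\le 1$ from a self-improvement argument using Young's inequality.

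\textbf{Step 1: reduction and a Caccioppoli estimate.} Since $f\ge 0$, both $\min\{u,k\}$ and $u+\varepsilon$ are again subsolutions of $\Delta v\ge -fv$, so one may assume $u$ bounded and bounded away from zero, test the weak formulation \eqref{Lap1} of $\Delta u\ge -fu$ against $\phi=\eta^{2}u^{2\beta-1}$ with $\eta\in\mathcal C_0^\infty(B_R)$ a cutoff and $\beta\ge\beta_0>\tfrac12$, and pass to the limits $k\to\infty,\ \varepsilon\to0$ at the end. Expanding $d\phi(\nabla u)$, using $du(\nabla u)=F^{*2}(du)$, the duality bound $|d\eta(\nabla u)|\le\Lambda\,F^{*}(d\eta)F^{*}(du)$ (the only place the reversibility $\Lambda$ enters), the triangle inequality for the Minkowski norm $F^{*}$, and Young's inequality to absorb the gradient term, I would obtain a Caccioppoli-type inequality
\[
\int_{B_R}\left(F^{*2}(d(\eta u^{\beta}))+R^{-2}\eta^{2}u^{2\beta}\right)dm\ \le\ C(\Lambda,\beta_0)\,\beta\int_{B_R}\left(F^{*2}(d\eta)+\mathcal A\,\eta^{2}+R^{-2}\eta^{2}\right)u^{2\beta}\,dm .
\]

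\textbf{Step 2: one iteration step.} Applying \eqref{sob2} to $\eta u^{\beta}$ (supported in $B_R$) and inserting Step 1, with $\chi:=\nu/(\nu-2)>1$ I get, for concentric balls $B_{r'}\subset B_r\subset B_R$ and $\eta\equiv1$ on $B_{r'}$, $\operatorname{supp}\eta\subset B_r$, $F^{*}(d\eta)\le 2/(r-r')$,
\[
\left(\int_{B_{r'}}u^{2\chi\beta}\,dm\right)^{1/\chi}\ \le\ e^{c(1+aR+KR^{2})}(1+\mathcal A R^{2})\,m(B_R)^{-2/\nu}\,\frac{C\,\beta\,R^{2}}{(r-r')^{2}}\int_{B_r}u^{2\beta}\,dm .
\]

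\textbf{Step 3: iteration, case $p>1$.} Take $R_j=\delta R+2^{-j}(1-\delta)R$ and $2\beta_j=p\chi^{j}$, so $\beta_0=p/2>\tfrac12$ exactly when $p>1$. Step 2 chains into $\|u\|_{L^{2\beta_{j+1}}(B_{R_{j+1}})}\le(\mathcal T\beta_j4^{j})^{1/(2\beta_j)}\|u\|_{L^{2\beta_j}(B_{R_j})}$ with $\mathcal T:=C\,e^{c(1+aR+KR^{2})}(1+\mathcal A R^{2})(1-\delta)^{-2}m(B_R)^{-2/\nu}$. Since $\sum_j 1/(2\beta_j)=\nu/(2p)$ and $\sum_j\log(\beta_j4^{j})/(2\beta_j)<\infty$, the infinite product of iteration constants converges to a constant depending only on $n,b,p$; letting $j\to\infty$ (so $B_{R_j}\downarrow B_{\delta R}$, $2\beta_j\uparrow\infty$) gives $\sup_{B_{\delta R}}u\le C(n,b,p)\,\mathcal T^{\nu/(2p)}\left(\int_{B_R}u^{p}\,dm\right)^{1/p}$, which, on raising to the $p$-th power and unwinding $\mathcal T$, is exactly \eqref{meanineq-1} with $C=C(n,b,p,\Lambda)$.

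\textbf{Step 4: case $0<p\le1$, and the main obstacle.} Here $2\beta_0-1=p-1\le0$, so the test function of Step 1 no longer yields a coercive estimate and the iteration cannot be started at $p$; the standard remedy is self-improvement. One runs Steps 1--3 with the fixed base exponent $q=2$ but between arbitrary radii $\delta R\le\theta'R<\theta R\le R$, then writes $\int_{B_R}u^{2}\le(\sup_{B_{\theta R}}u)^{2-p}\int_{B_R}u^{p}$, applies Young's inequality to split off a term $\tfrac{2-p}{2}(\sup_{B_{\theta R}}u)^{2}$, and absorbs it through the elementary iteration lemma ``$\Psi(\theta')\le\gamma\Psi(\theta)+A(\theta-\theta')^{-\alpha}$ for all $\delta\le\theta'<\theta\le1$ with $\gamma<1$ implies $\Psi(\delta)\le C(\gamma,\alpha)A(1-\delta)^{-\alpha}$'', the residual volume ratios being controlled by the doubling estimate \eqref{doubvol}. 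I expect the genuine difficulty to be purely bookkeeping: verifying that the $\delta$-dependence collapses to the single factor $(1-\delta)^{-\nu}$, that the $\mathcal A$-dependence stays confined to $(1+\mathcal A R^{2})^{\nu/2}$, and that the exponential weight $e^{c(1+aR+KR^{2})}$ survives both the infinite product and the self-improvement without the base exponent $p$ leaking into its exponent — the low-exponent step being the most error-prone part.
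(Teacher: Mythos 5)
Your proposal reproduces the paper's argument in all essentials: test $\Delta u\geq -fu$ against $\phi=\eta^{2}u^{2\beta-1}$, absorb the cross term to get a Caccioppoli estimate, feed it into the local Sobolev inequality \eqref{sob2} to obtain the one-step improvement in integrability, Moser-iterate to $L^{\infty}$ when the base exponent is large enough, and bridge the small-$p$ range by a self-improvement argument. The one place you genuinely deviate is in that last step: you pass through Young's inequality to decouple $(\sup u^{2})^{1-p/2}$ from $\int u^{p}$ and then invoke the standard absorption lemma ``$\Psi(\theta')\leq\gamma\Psi(\theta)+A(\theta-\theta')^{-\alpha}$ with $\gamma<1$ implies $\Psi(\delta)\leq C\,A(1-\delta)^{-\alpha}$'', whereas the paper iterates the relation $\mathcal{O}(\delta_{i-1})\leq \tilde{\mathcal B}\,2^{i\nu}(1-\delta)^{-\nu}\mathcal{O}(\delta_{i})^{\lambda}$ with $\lambda=1-\tfrac{p}{2}$ directly, using $\sum\lambda^{j-1}=2/p$ and $\lambda^{i}\to 0$. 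These are interchangeable devices; your Young-plus-absorption route is arguably cleaner about the $\delta$-dependence, since the $(1-\delta)^{-2\nu/p}$ produced by the absorption lemma reverts to $(1-\delta)^{-\nu}$ on raising to the $p/2$-th power, exactly as you anticipate, and the truncation $u\mapsto\min\{u,k\}$ you flag in Step 1 is indeed needed to make $\Psi$ (resp.\ $\mathcal O$) finite before either absorption scheme is legal. Finally, your threshold $p>1$ for the direct iteration versus the paper's $p\geq 2$ is pure bookkeeping: the paper insists on $2s-1\geq 1$ so that it can use $s^{2}\leq s(2s-1)$, while you keep the factor $2\beta-1>0$ visible; both treat the intermediate range through the self-improvement anyway.
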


\begin{proof}
Since $u$ is a non-negative function satisfying $\Delta u\geq-fu$ in the weak sense on $B_R$, we have
\begin{equation}{\label{mean-2}}
\int_{B_{R}} d \phi(\nabla u) d m \leq \int_{B_{R}}\phi fu dm
\end{equation}
for any non-negative function $\phi \in \mathcal{C}_0^{\infty}\left(B_R\right)$.  For any $0 < \delta<\delta^{\prime} \leq 1$ and $s\geq1$, let $\phi=u^{2s-1}\varphi^2$,  where $\varphi$ is a cut-off function defined by
\begin{equation}\label{cut}
\varphi(x)=
\begin{cases}
1 & \text { on } B_{\delta R}, \\ \frac{\delta^{\prime} R- d_{F} \left(x_0, x\right)}{\left(\delta^{\prime}-\delta\right) R} & \text { on } B_{\delta^{\prime} R} \backslash B_{\delta R}, \\ 0 & \text { on } M \backslash B_{\delta^{\prime} R}.
\end{cases}
\end{equation}
Then $F^*(-d \varphi) \leq \frac{1}{\left(\delta^{\prime}-\delta\right) R}$ and hence $F^*(d \varphi) \leq \frac{\Lambda}{\left(\delta^{\prime}-\delta\right) R}$ a.e. on $B_{\delta^{\prime} R}$. Thus, by (\ref{mean-2}), we have
\begin{equation}\label{mean-4}
(2s-1)\int_{B_R} \varphi^2 u^{2s-2}F^{*2}(d u) d m  +2 \int_{B_R} \varphi u^{2s-1} d \varphi(\nabla u ) d m \leq  \int_{B_R} \varphi^2u^{2s} f d m.
\end{equation}
Therefore,
\[
s^2\int_{B_R} \varphi^2 u^{2s-2}F^{*2}(d u) d m \leq -2s \int_{B_R} \varphi u^{2s-1} d \varphi(\nabla u ) d m + s \int_{B_R} \varphi^2 u^{2s} f d m,
\]
that is,
\beqn
\int_{B_R} \varphi^2 F^{*2}(d u^{s}) d m &\leq & -2 \int_{B_R} \varphi u^{s} d \varphi(\nabla u^{s} ) d m + s \int_{B_R} \varphi^2 u^{2s} f d m \\
&\leq & 2 \int_{B_R} \varphi u^{s} F^{*}(-d \varphi) F\left(\nabla u^{s}\right)dm + s\int_{B_R}\varphi^2 u^{2s} f d m \\
& \leq & \frac{1}{2} \int_{B_R} \varphi^2 F^2(\nabla u^{s}) dm + 2\int_{B_R}u^{2s} F^{*2}(-d \varphi) dm + s \int_{B_R} \varphi^2 u^{2s} f d m.\\
\eeqn
Then we have the following
\beq
\int_{B_R} \varphi^2 F^{*2}(d u^s) dm &\leq &  4 \int_{B_R} u^{2s} F^{* 2}(-d \varphi) d m + 2s \int_{B_R} \varphi^2 u^{2s} f d m \nonumber \\
&\leq & \frac{4}{\left(\delta^{\prime}-\delta\right)^2 R^2}\int_{B_{\delta^{'}R}} u^{2s} dm + 2s \mathcal{A}\int_{B_{\delta^{'}R}} u^{2s} d m. \label{meanv1}
\eeq
Furthermore, it is easy to see that
\be
\int_{B_R} \varphi^{2s} F^{*2}(d u^s) dm \leq \int_{B_R} \varphi^{2} F^{*2}(d u^s) dm \label{meanv2}
\ee
and
\be
\int_{B_R} u^{2s} F^{*2}(d \varphi^s) dm =s^2\int_{B_R} u^{2s} \varphi^{2s-2} F^{*2}(d \varphi) dm \label{meanv3}
\leq  \frac{s^2\Lambda^2}{(\delta'-\delta)^2R^2} \int_{B_{\delta'R}} u^{2s} dm.
\ee
From (\ref{meanv1}), (\ref{meanv2}), (\ref{meanv3}) and  by H\"{o}lder's inequality and Sobolev inequality (\ref{sob2}), we obtain
\beq
&& \int_{B_{\delta R}} u^{2s\left(1+\frac{2}{\nu}\right)}d m \leq\int_{B_R}(u \varphi)^{2s\left(1+\frac{2}{\nu}\right)} d m \leq\left(\int_{B_R}(u^s \varphi^s)^{\frac{2 \nu}{\nu-2}} d m\right)^{\frac{\nu-2}{\nu}} \cdot\left(\int_{B_R}(u^s \varphi^s)^2 d m\right)^{\frac{2}{\nu}} \nonumber \\
&& \leq \mathcal{B} \int_{B_R}\left(F^{*2}(d(u^s \varphi^s))+R^{-2} u^{2s} \varphi^{2s}\right) dm \cdot\left(\int_{B_R} u^{2s} \varphi^{2s} d m\right)^{\frac{2}{\nu}} \nonumber\\
&& \leq \mathcal{B} \int_{B_R}\left(2 \varphi^{2s} F^{* 2}(du^s)+2 u^{2s} F^{* 2}(d \varphi^s)+R^{-2} u^{2s} \varphi^{2s}\right) d m \cdot\left(\int_{B_{\delta^{\prime} R}} u^{2s} d m\right)^{\frac{2}{\nu}} \nonumber\\
&& \leq \mathcal{B}\left(\frac{8+2s^2\Lambda^2}{\left(\delta^{\prime}-\delta\right)^2 R^2}+\frac{1}{R^2}+4s\mathcal{A}\right)\left(\int_{B_{\delta^{\prime} R}} u^{2s} d m\right)^{1+\frac{2}{\nu}} \nonumber\\
&& \leq 11s^2\Lambda^2\mathcal{B}\left(\frac{1+\mathcal{A}R^2}{\left(\delta^{\prime}-\delta\right)^2 R^2}\right)\left(\int_{B_{\delta^{\prime} R}} u^{2s} d m\right)^{1+\frac{2}{\nu}}, \label{meanv4}
\eeq
where $\mathcal{B}:=e^{c(1+aR+KR^2)}R^2m(B_R)^{-\frac{2}{\nu}}$, $\nu$ and $c$ were chosen as in Proposition \ref{Sobolev},  and we have used the fact that $F^{*2}(du+d\varphi)\leq 2F^{*2}(du)+2F^{*2}(d\varphi)$ in the third line.

Let $t:=1+\frac{2}{\nu}$.  The inequality (\ref{meanv4}) implies that
\begin{equation}
\int_{B_{\delta R}} u^{2 s t } d m \leq \frac{\mathcal{B}\Xi s^2}{\left(\delta^{\prime}-\delta\right)^2 R^2}\left(\int_{B_{\delta^{\prime} R}} u^{2 s } d m\right)^t, \label{mean-3}
\end{equation}
where $\Xi:=11\Lambda^2(1+\mathcal{A}R^2)$.

Now we consider the case when  $p\geq 2$. For $s\geq 1$, we choose $k\geq 1$ such that $s=\frac{p}{2}k$. Then (\ref{mean-3}) is rewritten as
\be
\int_{B_{\delta R}} u^{pkt}dm\leq \frac{\mathcal{B}\Xi p^{2}k^{2}}{4(\delta'-\delta)^2R^2}\left(\int_{B_{\delta' R}}u^{pk}dm\right)^t. \label{m1}
\ee
For any $0<\delta <1$, let $\delta_0=1$ and $\delta_{i+1}=\delta_i-\frac{1-\delta}{2^{i+1}}$ on $B_{\delta_i R}$, $i=0,1, \cdots$. Applying (\ref{m1}) for $\delta^{\prime}=\delta_i, \delta=\delta_{i+1}$ and $k=t^i$, we have
$$
\int_{B_{\delta_{i+1}R}}u^{pt^{i+1}}dm\leq \frac{4^i\mathcal{B}\Xi p^2t^{2i}}{(1-\delta)^2R^2}\left(\int_{B_{\delta_{i}R}}u^{pt^i}dm\right)^t.
$$
By iteration, we can get the following
\beqn
||u^p||_{L^{t^{i+1}}(B_{\delta_{i+1}R})}&=& \left(\int_{B_{\delta_{i+1}R}}u^{pt^{i+1}}dm\right)^{\frac{1}{t^{i+1}}}\\
&\leq& 4^{\sum jt^{-j}}\left[\frac{\mathcal{B}\Xi p^2}{(1-\delta)^2R^2}\right]^{\sum t^{-j}}t^{2\sum jt^{-j}}\int_{B_R}u^p dm,
\eeqn
in which $\sum$ denotes the summation on $j$ from 1 to $i+1$. Since $\sum_{j=1}^{\infty} t^{-j}=\frac{\nu}{2}$ and $\sum_{j=1}^{\infty} j t^{-j}=\frac{\nu^2+2\nu}{4}$, we have
\beq
||u^p||_{L^{\infty}(B_{\delta R})}&\leq& (2t)^{\frac{\nu^2+2\nu}{2}}(\mathcal{B}\Xi)^{\frac{\nu}{2}}\left(\frac{p}{(1-\delta)R}\right)^{\nu}\int_{B_R}u^p dm\nonumber\\
&=& e^{C(1+aR+KR^2)}(1-\delta)^{-\nu}m(B_R)^{-1}(1+\mathcal{A}R^2)^{\frac{\nu}{2}}\int_{B_R}u^p dm,
\eeq
which implies (\ref{meanineq-1}) with $p\geq 2$, where $C:=\log \left[(2t)^{\frac{\nu^2+2\nu}{2}}(11\Lambda^2)^{\frac{\nu}{2}}p^\nu\right]+\frac{\nu}{2}c$. This is the complete proof of (\ref{meanineq-1}) when $p\geq 2$.

In the following, we consider the case when $0<p<2$. For any $0<\delta<1$, we choose a constant $\varepsilon\in (0,1)$ satisfying $0<\delta+\varepsilon\leq 1$. Let $\delta_0=\delta+\varepsilon$, $\delta_{i+1}=\delta_i-\frac{\varepsilon}{2^{i+1}}$ on $B_{\delta_iR}$, $i=0, 1, \cdots$. Then, by the similar argument as above, we can obtain
\beq
\sup\limits_{B_{\delta R}} u^2
&\leq& e^{C(1+aR+KR^2)}\varepsilon^{-\nu}m(B_R)^{-1}(1+\mathcal{A}R^2)^{\frac{\nu}{2}}\int_{B_{(\delta+\varepsilon)R}}u^2 dm\nonumber\\
&\leq& e^{C(1+aR+KR^2)}\varepsilon^{-\nu}m(B_R)^{-1}(1+\mathcal{A}R^2)^{\frac{\nu}{2}}\left(\sup\limits_{B_{(\delta+\varepsilon)R}} u^2\right)^{1-\frac{p}{2}}\int_{B_R}u^p dm \label{m2}
\eeq
Now, let $\lambda=1-\frac{p}{2}>0$ and $\mathcal{O}(\delta):=\sup\limits_{B_{\delta R}}u^2$. Choose $\delta_0=\delta$ and $\delta_i=\delta_{i-1}+\frac{1-\delta}{2^i}$ for $i=1, 2, \cdots$. Applying (\ref{m2}) for $\delta=\delta_{i-1}$ and $\delta+\varepsilon=\delta_i$, we have
$$\mathcal{O}(\delta_{i-1})\leq \tilde{\mathcal{B}}2^{i\nu}(1-\delta)^{-\nu}\mathcal{O}(\delta_i)^\lambda,$$
where $\tilde{\mathcal{B}}:=e^{C(1+aR+KR^2)}m(B_R)^{-1}(1+\mathcal{A}R^2)^{\frac{\nu}{2}}\int_{B_R}u^p dm$. By iterating, we get
\be
\mathcal{O}(\delta_0)\leq \mathcal{\tilde{B}}^{\sum \lambda^{j-1}} 2^{\nu\sum j\lambda^{j-1}}(1-\delta)^{-\nu\sum \lambda^{j-1}}\mathcal{O}(\delta_{i})^{\lambda^i}, \label{iter0}
\ee
in which $\sum$ denotes the summation on $j$ from $1$ to $i$. Obviously, we have $\lim\limits_{i\rightarrow \infty}\delta_i=1$ and $\lim\limits_{i\rightarrow \infty}\lambda^i=1$. Moveover,  $\sum\limits^{\infty}_{j=1}\lambda^{j-1}=\frac{2}{p}$ and $\sum\limits^{\infty}_{j=1}j\lambda^{j-1}$ converges.
Letting $i\rightarrow \infty$ on the both sides of (\ref{iter0}), there exists a positive constant $\tilde{C}=\tilde{C}(n, b, p, \Lambda)$  such that
$$\sup\limits_{B_{\delta R}}u^p \leq \mathcal{O}(\delta)^{\frac{p}{2}} \leq e^{\tilde{C}(1+aR+KR^2)}(1+\mathcal{A}R^2)^{\frac{\nu}{2}}(1-\delta)^{-\nu}m(B_{R})^{-1}\int_{B_{R}} u^p dm.$$
This completes the proof.
\end{proof}

By the similar discussions, we can obtain the mean value inequality for positive superharmonic functions.

\begin{thm}\label{supu-1}
Let $(M, F, m)$ be an $n$-dimensional forward complete Finsler measure space with finite reversibility $\Lambda$. Assume that ${\rm Ric}_{\infty} \geq -K$ for some $K\geq 0$ and $|\tau|\leq ar+b$, where $a,b$ are some non-negative constants, $r=d(x_0,x)$ is the distance function. Suppose that $u$ is a positive superharmonic function satisfying $\Delta u\leq 0$  on $B_R$. Then, for any $p\in (0,\infty)$ and $\delta \in (0,1)$,  there exists a positive constant $C=C\left(n, b, p, \Lambda\right)$ depending on $n, b, p$ and  $\Lambda$, such that
\be
\sup_{B_{\delta R}} u^{-p} \leq e^{C(1+aR+KR^2)} (1-\delta)^{-\nu} m\left(B_R\right)^{-1}  \int_{B_R} u^{-p} dm. \label{-mean}
\ee
\end{thm}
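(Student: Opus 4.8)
The plan is to mimic the Moser iteration in the proof of Theorem~\ref{meanineq}, but applied to negative powers of $u$ rather than to $u$ itself. The point is that for a positive superharmonic function the inhomogeneous term $fu$ of Theorem~\ref{meanineq} is replaced by $0$, so the factor $(1+\mathcal{A}R^2)^{\nu/2}$ in (\ref{meanineq-1}) drops out and one arrives exactly at (\ref{-mean}). Throughout, $\nu$, $c$, $t:=1+2/\nu$ and $\mathcal{B}:=e^{c(1+aR+KR^2)}R^2m(B_R)^{-2/\nu}$ are as in Proposition~\ref{Sobolev} and in the proof of Theorem~\ref{meanineq}.

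First I would pass to $u_\epsilon:=u+\epsilon$ for $\epsilon>0$: since $\nabla u_\epsilon=\nabla u$, $u_\epsilon$ is again a positive superharmonic function on $B_R$, and the weak form of $\Delta u_\epsilon\le0$ reads $\int_{B_R}d\phi(\nabla u)\,dm\ge0$ for every non-negative $\phi\in W_0^{1,2}(B_R)\cap L^\infty(B_R)$. For $0<\delta<\delta'\le1$, $s\ge1$, and $\varphi$ the cut-off (\ref{cut}), I would test with $\phi:=u_\epsilon^{-(2s-1)}\varphi^2$, which is admissible precisely because $u_\epsilon\ge\epsilon>0$ makes it bounded (this defuses the subtlety of negative powers; at the end one lets $\epsilon\downarrow0$). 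Expanding $d\phi=-(2s-1)u_\epsilon^{-2s}\varphi^2\,du+2u_\epsilon^{-(2s-1)}\varphi\,d\varphi$ and using $du(\nabla u)=F^{*2}(du)\ge0$ and $d\varphi(\nabla u)\le F^{*}(d\varphi)F^{*}(du)$ yields the Caccioppoli-type estimate
$$(2s-1)\int_{B_R}u_\epsilon^{-2s}\varphi^2 F^{*2}(du)\,dm\le 2\int_{B_R}u_\epsilon^{-(2s-1)}\varphi\,F^{*}(d\varphi)\,F^{*}(du)\,dm .$$

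The crux is to rewrite this in terms of $w:=u_\epsilon^{-(s-1)}$, which is the unique power making $dw=-(s-1)u_\epsilon^{-s}\,du$, hence $F^{*2}(dw)=(s-1)^2u_\epsilon^{-2s}F^{*2}(-du)$, comparable to the left-hand integrand. With the non-reversibility bounds $\Lambda^{-2}F^{*2}(du)\le F^{*2}(-du)\le\Lambda^{2}F^{*2}(du)$ (the reversibility constant of $F^{*}$ being $\le\Lambda$) I would bound the left side below by $\Lambda^{-2}(s-1)^{-2}\int\varphi^2F^{*2}(dw)$ and the right side above by $2\Lambda(s-1)^{-1}\int w\,\varphi\,F^{*}(d\varphi)F^{*}(dw)$; the essential observation is that, since $(s-1)/(2s-1)\le\tfrac12$ for all $s\ge1$, every constant surviving the division is controlled by a fixed power of $\Lambda$, uniformly in $s$. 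Absorbing the gradient term by Young's inequality and using $F^{*}(d\varphi)\le\Lambda[(\delta'-\delta)R]^{-1}$ gives, with $c_0$ an absolute constant,
$$\int_{B_R}\varphi^2F^{*2}\!\big(d(u_\epsilon^{-(s-1)})\big)\,dm\le\frac{c_0\,\Lambda^{8}}{(\delta'-\delta)^2R^2}\int_{B_{\delta'R}}u_\epsilon^{-2(s-1)}\,dm ,$$
which is exactly the analogue of (\ref{meanv1}) with $\mathcal{A}=0$ and with $s-1$ in place of $s$.

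From here the argument is verbatim that of Theorem~\ref{meanineq}. Inserting this estimate, together with the trivial bounds (\ref{meanv2}) and (\ref{meanv3}), into the Sobolev inequality (\ref{sob2}) produces the reverse iteration step $\int_{B_{\delta R}}u_\epsilon^{-2\tau t}\,dm\le\frac{\mathcal{B}\,\Xi_0\,\tau^2}{(\delta'-\delta)^2R^2}\big(\int_{B_{\delta'R}}u_\epsilon^{-2\tau}\,dm\big)^{t}$ for all $\tau\ge0$, where now $\Xi_0:=11\Lambda^2$ carries no $\mathcal{A}$-dependence. Running Moser's iteration with $\tau=\tfrac p2 t^i$ for $p\ge2$, and then the interpolation iteration on $\mathcal{O}(\delta):=\sup_{B_{\delta R}}u_\epsilon^{-2}$ for $0<p<2$, exactly as in the two halves of the proof of Theorem~\ref{meanineq}, yields a constant $C=C(n,b,p,\Lambda)$ with $\sup_{B_{\delta R}}u_\epsilon^{-p}\le e^{C(1+aR+KR^2)}(1-\delta)^{-\nu}m(B_R)^{-1}\int_{B_R}u_\epsilon^{-p}\,dm$; letting $\epsilon\downarrow0$ and applying monotone convergence to both sides gives (\ref{-mean}). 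I expect the middle step to be the only genuine difficulty: one must select the auxiliary exponent $w=u_\epsilon^{-(s-1)}$ so that the Caccioppoli inequality matches on both sides after the reversibility conversions, and then check that the constants neither blow up as $s\to\infty$ nor degenerate as $s\to1$, since it is precisely their uniform boundedness in $s$ that turns the iteration into an $L^\infty$-bound. The remaining ingredients — admissibility of the test function (handled by the $u_\epsilon$-regularization) and the iteration bookkeeping — are routine once the parallel with Theorem~\ref{meanineq} is in place.
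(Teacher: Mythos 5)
Your proposal is correct and follows precisely the route the paper intends (the paper merely remarks ``by the similar discussions'' after Theorem~\ref{meanineq}): Moser iteration on negative powers of $u_\epsilon=u+\epsilon$, with the Caccioppoli step, Sobolev inequality (\ref{sob2}), and the $p\ge2$/$0<p<2$ dichotomy carried over verbatim. You correctly identify the one genuine new wrinkle — that $d(u_\epsilon^{-(s-1)})$ is a \emph{negative} multiple of $du$, so the conversions between $F^{*}(du)$, $F^{*}(-du)$ and $F^{*}(dw)$ each cost a factor of $\Lambda$, which is uniformly bounded and thus absorbed into $C(n,b,p,\Lambda)$ — and the $\epsilon$-regularization with monotone convergence at the end is the right way to handle the unbounded test function.
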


\vskip 3mm

In order to prove the $p$-Harnack inequality, the following lemma is necessary.

\begin{lem}{\rm(\cite{SC})}\label{measure} Suppose that $\{U_{\sigma} \mid 0< \sigma \leq 1\}$ is a family of measurable subsets of a measurable set $U \subset \mathbb{R}^{n}$ with the property that $U_{\sigma'}\subset U_{\sigma}$ if $\sigma'\leq\sigma$. Fix $0<\delta <1$. Let $\gamma$ and $C$ be positive constants and $0<\alpha_0\leq\infty$. Let $g$ be a positive measurable function defined on $U_1=U$ which satisfies
\begin{equation}\label{alpha}
\left(\int_{U_{\sigma'}}g^{\alpha_0}dm\right)^{\frac{1}{\alpha_{0}}}\leq \left[C(\sigma-\sigma')^{-\gamma}m(U)^{-1}\right]^{\frac{1}{\alpha}-\frac{1}{\alpha_0}} \left(\int_{U_{\sigma}}g^{\alpha}dm\right)^{\frac{1}{\alpha}}
\end{equation}
for all $\sigma$, $\sigma'$, $\alpha$ satisfying $0<\delta\leq\sigma'<\sigma\leq1$ and $0<\alpha\leq \min\{1, \frac{\alpha_0}{2}\}$. Assume further that $g$ satisfies
\be
m(\log g>\lambda)\leq Cm(U)\lambda^{-1} \label{Lcondi2}
\ee
for all $\lambda>0$. Then
\begin{equation}\label{U}
\left(\int_{U_{\delta}}g^{\alpha_0}dm\right)^{\frac{1}{\alpha_{0}}}\leq C_0m(U)^{\frac{1}{\alpha_0}},
\end{equation}
where $C_0$ depends only on $\delta$, $\gamma$, $C$ and a lower bound on $\alpha_0$.
\end{lem}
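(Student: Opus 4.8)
\medskip
\noindent\emph{Proof idea.} The statement is the abstract John--Nirenberg / Bombieri--Giusti iteration lemma, in which (\ref{alpha}) plays the role of a Moser-type reverse H\"older self-improvement and (\ref{Lcondi2}) the role of a logarithmic (weak $L^1$) control, so the plan is to follow the classical argument. First I would reduce everything to a bound on one fixed small power of $g$: applying (\ref{alpha}) a single time with $\sigma'=\delta$, $\sigma=(1+\delta)/2$ and $\alpha=\alpha_1:=\min\{1,\alpha_0/2\}$ (reading $1/\alpha_0=0$ when $\alpha_0=\infty$), estimate (\ref{U}) follows immediately from a bound of the form $\int_{U_{(1+\delta)/2}}g^{\alpha_1}\,dm\le C'm(U)$ with $C'=C'(\delta,\gamma,C,\alpha_0)$. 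So the whole task becomes bounding this small-power integral, which I would do by an iteration in the radius.

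The single place where (\ref{Lcondi2}) enters is an estimate for $\int_{U_\sigma}g^{\alpha_1}\,dm$ in terms of the normalized size $\Phi(\sigma):=m(U)^{-1/\alpha_0}\|g\|_{L^{\alpha_0}(U_\sigma)}$. Writing $\int_{U_\sigma}g^{\alpha_1}\,dm=\alpha_1\int_0^\infty\lambda^{\alpha_1-1}m(\{g>\lambda\}\cap U_\sigma)\,d\lambda$ and splitting at the level $\lambda_*$ where the two available level-set bounds cross --- namely $\min\{m(U),\,Cm(U)/\log\lambda\}$ from (\ref{Lcondi2}) for $\lambda\le\lambda_*$, and the Chebyshev bound $\lambda^{-\alpha_0}\int_{U_\sigma}g^{\alpha_0}\,dm$ (legitimate because the $L^{\alpha_0}$-norm is tacitly finite) for $\lambda>\lambda_*$ --- one obtains
\[
\int_{U_\sigma}g^{\alpha_1}\,dm\;\le\;C''\,m(U)\,\Phi(\sigma)^{\alpha_1}\Big(\frac{C}{\log\Phi(\sigma)}\Big)^{1-\alpha_1/\alpha_0}\qquad\text{when }\Phi(\sigma)\ge e^{C},
\]
and $\int_{U_\sigma}g^{\alpha_1}\,dm\le e^{\alpha_1C}m(U)$ otherwise, where $C''=C''(\alpha_0)$. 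This is exactly the gain over the trivial H\"older bound $\int_{U_\sigma}g^{\alpha_1}\,dm\le\Phi(\sigma)^{\alpha_1}m(U)$ that the logarithmic estimate buys.

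Feeding this into (\ref{alpha}) (with $\alpha=\alpha_1$) produces a recursion for $\Phi$: for $\delta\le\sigma'<\sigma\le1$ with $\Phi(\sigma)\ge e^{C}$,
\[
\Phi(\sigma')\;\le\;C'''\Big(\frac{C^2(\sigma-\sigma')^{-\gamma}}{\log\Phi(\sigma)}\Big)^{1/\alpha_1-1/\alpha_0}\Phi(\sigma).
\]
Since $U_{\sigma'}\subset U_\sigma$ forces $\Phi$ to be non-decreasing in $\sigma$, this yields a dichotomy for each such pair: either $\Phi(\sigma)$ already lies below the threshold $T(\sigma-\sigma'):=\exp\big(c_0C^2(\sigma-\sigma')^{-\gamma}\big)$ --- and then $\Phi(\sigma')\le\Phi(\sigma)<T(\sigma-\sigma')$ --- or the coefficient is $\le\tfrac12$ and $\Phi(\sigma')\le\tfrac12\Phi(\sigma)$. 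Iterating this dichotomy along a sequence of radii tending to $1$, and using that by (\ref{alpha}) the quantity $\Phi(\sigma)$ can blow up at most polynomially in $(1-\sigma)^{-1}$ as $\sigma\to1$ (so that the first alternative must eventually be activated, its persistent failure forcing exponential growth of $\Phi$ near $1$), one concludes $\Phi(\delta)\le C_0=C_0(\delta,\gamma,C,\alpha_0)$, which is (\ref{U}).

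The hard part is this last iteration. The threshold $T(\sigma-\sigma')$ degenerates as the gap $\sigma-\sigma'$ shrinks, so one cannot just run a fixed geometric sequence of radii; the radii, together with a second, outer iteration over dyadic shells accumulating at $\sigma=1$, must be organized so that the first alternative is triggered while the gap --- hence the threshold --- is still controlled, which is precisely where the polynomial a priori bound from (\ref{alpha}) is used both to force termination and to remove the dependence on $g$. This combinatorial bookkeeping is the technical core of the Bombieri--Giusti lemma; in practice I would carry it out following \cite{SC}, or --- the statement being quoted from there --- simply invoke it.
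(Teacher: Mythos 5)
The paper gives no proof of this lemma: it is quoted, with attribution, from Saloff-Coste's monograph \cite{SC}, where it appears as the abstract Bombieri--Giusti iteration lemma. So there is no in-paper argument to compare against; what follows is a check of your sketch on its own terms.

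Steps 1--3 are sound in substance: the reduction to bounding a small-power integral, the log-gain estimate obtained by splitting level sets between the weak-$L^1$ bound on $\log g$ (moderate levels) and Chebyshev against the $L^{\alpha_0}$-norm (high levels), and the resulting self-improving recursion for $\Phi(\sigma):=m(U)^{-1/\alpha_0}\|g\|_{L^{\alpha_0}(U_\sigma)}$, are the standard moves. (The threshold $e^{C}$ should be replaced by a larger constant depending also on $\alpha_0$, but that is cosmetic.)

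Step 4, the closing iteration, is where the sketch falls short, and the particular bridge you propose does not hold up. You fix $\alpha=\alpha_1$ and claim that the polynomial a priori growth of $\Phi(\sigma)$ near $\sigma=1$ coming from (\ref{alpha}) both forces termination and removes the dependence on $g$. But that growth bound reads $\Phi(\sigma)\le\bigl[C(1-\sigma)^{-\gamma}\bigr]^{1/\alpha_1-1/\alpha_0}\,m(U)^{-1/\alpha_1}\|g\|_{L^{\alpha_1}(U_1)}$, whose constant is $g$-dependent; with the fixed per-step gain $2^{-(1/\alpha_1-1/\alpha_0)}$ on dyadic shells, pitting this exponential gain against the polynomial blow-up does not produce a $g$-independent $C_0$ (and does not even yield a contradiction when $\gamma\ge1$). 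The mechanism your sketch does not use is that (\ref{alpha}) is granted for \emph{every} $\alpha\in(0,\alpha_1]$. The iteration that actually closes takes $\alpha$ adaptively small at each step --- as small as the log-gain estimate permits, roughly $1/\alpha\sim\log\Phi(\sigma)/\log\log\Phi(\sigma)$ --- so that once the threshold is met the one-step gain is of the order $\Phi(\sigma)^{-c/\log\log\Phi(\sigma)}$, not a fixed factor of $2$; choosing the step widths so that the threshold is just triggered then makes their sum strictly less than $1-\delta$ as soon as $\Phi(\delta)$ exceeds a universal level, forcing $\Phi$ to blow up at some $\sigma<1$, which contradicts local finiteness uniformly in $\gamma$ and independently of $g$. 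Deferring to \cite{SC} for the bookkeeping is reasonable, but as written your step 4 does not close the argument.
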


Now we are in the position to prove the $p$-Harnack inequality for positive harmonic functions.

\begin{proof}[Proof of Theorem \ref{Harnack}]
By $L^p$ mean value inequality in Theorem \ref{meanineq} with $f=0$, for $\rho\in(0,1)$ and $0<\delta<1$, we have
\be
\sup\limits_{B_{\rho \delta R}}u^p \leq e^{C(1+aR+KR^2)} (1-\rho)^{-\nu} m\left(B_{\delta R}\right)^{-1}  \int_{B_{\delta R}} u^p dm. \label{mean2-1}
\ee
Hence, in order to prove Theorem \ref{Harnack}, we just need to prove the following inequality
\begin{equation}
\int_{B_{\delta R}} u^p dm \leq e^{C_0(1+aR+KR^2)}m(B_{R})\inf\limits_{B_{\delta R}}u^p  \label{mean2-2}
\end{equation}
for any $0<\delta<1$, where $C_0:=C_0(n, \delta, b, p, \Lambda)>0$.

Since $u$ is a positive harmonic function, we have
$$
\int_{B_R} d\phi(\nabla u) dm=0
$$
for all nonnegative functions $\phi \in \mathcal{C}_0^{\infty}\left(B_R\right)$.   For any $0<\delta<\delta'\leq1$,  let $\phi=u^{-1}\varphi^2$, where $\varphi$ is a cut-off function defined by (\ref{cut}).
Thus,
\beqn
0 &=&  \int_{B_R}d(u^{-1}\varphi^2)(\nabla u) dm = -\int_{B_R}u^{-2}\varphi^2 F^2(\nabla u)dm+ 2\int_{B_R} u^{-1}\varphi d\varphi(\nabla u)dm \\
&=&-\int_{B_R}\varphi^2F^2(\nabla v)dm+2\int_{B_R}\varphi d\varphi(\nabla v)dm,
\eeqn
where $v=\log u$. Then
\beqn
\int_{B_R}\varphi^{2}F^{2}(\nabla v)dm &=& 2\int_{B_R}\varphi d\varphi(\nabla v)dm \leq 2\int_{B_R}\varphi F(\nabla v)F^{*}(d \varphi) dm  \\
&\leq & \frac{1}{2}\int_{B_R} \varphi^2F^2(\nabla v)dm+2\int_{B_R}F^{*2}(d\varphi)dm,
\eeqn
from which we can get
\be
\int_{B_{\delta R}}F^2(\nabla v)dm\leq4\int_{B_{\delta' R}}F^{*2} (d\varphi)dm\leq\frac{4\Lambda^2}{(\delta'-\delta)^2R^2}m({B_{\delta'R}}).  \label{h1}
\ee
In addition, it follows that
\beq
t \cdot m\left(B_{\delta R}\bigcap \{|v-\bar{v}|\geq t\}\right) & \leq &\int_{B_{\delta R}}|v-\bar{v}|dm \nonumber\\
&\leq &\left(\int_{B_{\delta R}}|v-\bar{v}|^2dm\right)^{\frac{1}{2}}m(B_{\delta R})^{\frac{1}{2}} \nonumber\\
&\leq &\,\sqrt{c_1}e^{\frac{1}{2}c_2(aR+KR^2)}R\left(\int_{B_{\delta R}}F^2(\nabla v)dm\right)^{\frac{1}{2}}m(B_{\delta R})^{\frac{1}{2}}\nonumber\\
& \leq &\,\sqrt{c_1}e^{\frac{1}{2}c_2(aR+KR^2)} \frac{2\Lambda}{\delta'-\delta}  m(B_{R}),  \label{h2}
\eeq
where $\bar{v}=\frac{1}{m\left(B_{\delta R}\right)} \int_{B_{\delta R}} v dm$ and we have used Poincar\'{e} inequality (\ref{poin2}) in the third inequality and the last inequality follows from (\ref{h1}).

In the following, we first consider the case when $p\geq 2$. Letting $g_1 =e^{-\bar{v}}u$ in Lemma \ref{measure}, we have $\Delta^{\nabla u} g_1=e^{-\bar{v}} \Delta u=0$. By the mean value inequality in Theorem \ref{meanineq}, we have
\beqn
\left(\int_{B_{\delta R}}g_1^p dm\right)^{\frac{1}{p}} &\leq & \left(\sup\limits_{B_{\delta R}}g_1^p \right)^{\frac{1}{p}} m(B_{\delta R})^{\frac{1}{p}}\leq   \left(\sup\limits_{B_{\delta R}}g_1^{\frac{1}{2}}\right)^2 m(B_{\delta R})^{\frac{1}{p}}\\
&\leq& e^{2C(1+aR+KR^2)}(1-\delta)^{-2\nu}m(B_{R})^{\frac{1-2p}{p}}\left(\int_{B_R}g_1^{\frac{1}{2}}dm\right)^2 ,
\eeqn
which means that (\ref{alpha}) holds for $\alpha_0=p$, $\alpha=\frac{1}{2}$ and $\sigma=1$. On the other hand, in this case, (\ref{h2}) means that (\ref{Lcondi2}) holds. Then (\ref{U}) becomes
\begin{equation}\label{e-v}
\int_{B_{\delta R}} u^p dm \leq e^{C_1(1+aR+KR^2)} m(B_{R})e^{p\bar{v}}.
\end{equation}
Similarly, letting $g_2=e^{\bar{v}}u^{-1}$ and taking $\alpha_0=p$, $\alpha=\frac{1}{2}$ and $\sigma=1$, by the similar argument, we can conclude from Theorem \ref{supu-1} and Lemma \ref{measure} that
\begin{equation}\label{e-u}
\sup\limits_{B_{\delta R}} \{u^{-p}\} \leq e^{C_2(1+aR+KR^2)}e^{-p\bar{v}}.
\end{equation}
Therefore, from $\inf\limits_{B_{\delta R}} \{u^p\}=\left(\sup\limits_{B_{\delta R}} \{u^{-p}\}\right)^{-1}$, we obtain the following
$$
\int_{B_{\delta R}} u^p dm \leq e^{C_3(1+aR+KR^2)} m(B_{R})\inf\limits_{B_{\delta R}}u^p,
$$
that is, (\ref{mean2-2}) holds. Then we immediately obtain by (\ref{mean2-1})
\begin{equation}
\sup\limits_{B_{\rho \delta R}}u^p \leq e^{C_4(1+aR+KR^2)} \frac{m(B_{R})}{m(B_{\delta R})}\inf\limits_{B_{\delta R}}u^p\leq e^{C_5(1+aR+KR^2)}\inf\limits_{B_{\rho\delta R}}u^p,
\end{equation}
which is the desired inequality by replacing $\rho\delta$ with $\delta$. This completes the proof in the case when $p\geq 2$.

Now we consider the case when $0<p<2$. Letting $\tilde{g}_1=e^{-\bar{v}}u$ and by the mean value inequality in Theorem \ref{meanineq}, we have
\beqn
\left(\int_{B_{\delta R}}\tilde{g}_1^p dm\right)^{\frac{1}{p}} &\leq & \left(\sup\limits_{B_{\delta R}}\tilde{g}_1^{\frac{p}{2}}\right)^{\frac{2}{p}} m(B_{\delta R})^{\frac{1}{p}}\\
&\leq& e^{\frac{2}{p}C(1+aR+KR^2)}(1-\delta)^{-\frac{2}{p}\nu}m(B_{R})^{-\frac{1}{p}}\left(\int_{B_R}\tilde{g}_1^{\frac{p}{2}}dm\right)^\frac{2}{p} ,
\eeqn
which means that (\ref{alpha}) holds for $\alpha_0=p$, $\alpha=\frac{p}{2}$ and $\sigma=1$. Further, by (\ref{h2}) again, we know that (\ref{Lcondi2}) holds for $\tilde{g}_1$. Then by (\ref{U}), we have
\begin{equation}\label{p<2}
\int_{B_{\delta R}} u^p dm \leq e^{C_6(1+aR+KR^2)} m(B_{R})^{-1}e^{p\bar{v}}.
\end{equation}
Moreover, letting $\tilde{g}_2=e^{\bar{v}}u^{-1}$ and taking $\alpha_0=p$, $\alpha=\frac{p}{2}$ and $\sigma=1$, we obtain
\begin{equation}\label{p<21}
{\left(\inf\limits_{B_{\delta R}} \{u^p\}\right)}^{-1}=\sup\limits_{B_{\delta R}} \{u^{-p}\} \leq e^{C_7(1+aR+KR^2)}m(B_R)e^{-p\bar{v}}.
\end{equation}
Then by the same argument as above, we can get (\ref{mean2-2}). This means that (\ref{Harnack}) holds.
\end{proof}

\vskip 2mm

By the same argument of Theorem \ref{Harnack}, we can get a weighted version of $p$-Harnack inequality, which is important for our discussions about the applications of the $p$-Harnack inequality.

\begin{thm}\label{Harnack2}
Let $(M, F, m)$ be an $n$-dimensional forward complete Finsler measure space equipped with a uniformly convex and uniformly smooth Finsler metric $F$. Assume that ${\rm Ric}_{\infty}\geq -K$ for some $K\geq 0$ and $|\tau|\leq ar+b$, where $a,b$ are some non-negative constants, $r=d(x_0,x)$ is the distance function. Let $V$ be a measurable  vector field on $M$ and  $v$ be a non-negative function such that $M_{v}\subset M_{V}=\{x \in M \mid V(x)\neq 0\}$. If $\Delta^{V} v =0$ on $B_{R}$, then for any $p\in (0,\infty)$ and $\delta \in (0,1)$, there exists a positive constant $C=C\left(n, \delta, b, p, \kappa, \kappa^*\right)$ depending on $n, \delta, b, p, \kappa$ and $\kappa^*$, such that
\begin{equation}
\sup\limits_{B_{\delta R}}v^p \leq e^{C(1+aR+KR^2)}\inf\limits_{B_{\delta R}}v^p.\label{harnackpv}
\end{equation}
\end{thm}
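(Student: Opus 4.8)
\noindent\textbf{Plan of proof of Theorem~\ref{Harnack2}.}
The plan is to repeat the whole chain of arguments that produced Theorem~\ref{Harnack}, replacing the Finsler Laplacian $\Delta$ by the weighted Laplacian $\Delta^{V}$ and the energy density $F^{*2}(du)$ by the weighted energy density $g^{*ij}(x,V)\,\partial_{i}u\,\partial_{j}u$. The single new ingredient needed is that, because $F$ is uniformly convex and uniformly smooth, the matrix $\bigl(g^{*ij}(x,V)\bigr)$ is uniformly elliptic in the sense of (\ref{F*sc}) with ellipticity constants $\tilde{\kappa}=(\kappa^{*})^{-1}$ and $\tilde{\kappa}^{*}=\kappa^{-1}$ that are \emph{independent of the reference vector} $V$; hence the weighted Dirichlet form $\phi\mapsto\int_{B_{R}}g^{*ij}(x,V)\,\partial_{i}\phi\,\partial_{j}\phi\,dm$ is comparable, up to the fixed factors $\kappa$ and $\kappa^{*}$, to $\int_{B_{R}}F^{*2}(d\phi)\,dm$. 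Since the Laplacian comparison Theorem~\ref{Laplace} and the volume comparison Theorem~\ref{Volume} concern only the distance function $r=d_{F}(x_{0},\cdot)$ (there $\Delta r=\Delta^{\nabla r}r$) and are insensitive to $V$, the doubling property (\ref{doubvol}) and the local uniform Poincar\'{e} inequality of Proposition~\ref{Poincare} survive; sandwiching the two Dirichlet forms upgrades them to a weighted Poincar\'{e} inequality
\[
\int_{B_{R}}|u-\bar u|^{2}\,dm\le c_{1}\kappa\,e^{c_{2}(aR+KR^{2})}R^{2}\int_{B_{R}}g^{*ij}(x,V)\,\partial_{i}u\,\partial_{j}u\,dm ,
\]
and, exactly as in Proposition~\ref{Sobolev}, to a weighted Sobolev inequality with the same exponent $\nu=4(n+4b)-2$, all constants now depending only on $n,b,\kappa,\kappa^{*}$.

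With these two functional inequalities in hand I would next establish the weighted counterpart of the $L^{p}$ mean value inequality Theorem~\ref{meanineq}: if $v\ge 0$ satisfies $\Delta^{V}v\ge -fv$ weakly on $B_{R}$ with $0\le f\in L^{\infty}(B_{R})$, then for any $p\in(0,\infty)$ and $\delta\in(0,1)$
\[
\sup_{B_{\delta R}}v^{p}\le e^{C(1+aR+KR^{2})}(1+\mathcal{A}R^{2})^{\nu/2}(1-\delta)^{-\nu}m(B_{R})^{-1}\int_{B_{R}}v^{p}\,dm ,
\]
with $\mathcal{A}=\sup_{B_{R}}|f|$ and $C=C(n,b,p,\kappa,\kappa^{*})$. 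The argument is the Moser iteration of Theorem~\ref{meanineq} carried out verbatim: test the weak inequality with $\phi=v^{2s-1}\varphi^{2}$ for the cut-off $\varphi$ of (\ref{cut}), use the Cauchy--Schwarz inequality for the form $g^{*ij}(x,V)$ together with $g^{*ij}(x,V)\,\partial_{i}\varphi\,\partial_{j}\varphi\le(\kappa^{*})^{-2}(\delta'-\delta)^{-2}R^{-2}$ a.e.\ on $B_{\delta'R}$, feed the resulting reverse-H\"older inequality into the weighted Sobolev inequality, and iterate; the range $0<p<2$ is recovered by the same self-improvement argument. An identical computation yields the weighted analogue of Theorem~\ref{supu-1} for positive $\Delta^{V}$-superharmonic functions.

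Finally, the proof of (\ref{harnackpv}) copies the proof of Theorem~\ref{Harnack}. Because $\Delta^{V}v=0$ and $M_{v}\subset M_{V}$, testing with $\phi=v^{-1}\varphi^{2}$ and writing $w=\log v$ gives $\int_{B_{\delta R}}g^{*ij}(x,V)\,\partial_{i}w\,\partial_{j}w\,dm\le c(\kappa,\kappa^{*})(\delta'-\delta)^{-2}R^{-2}m(B_{\delta'R})$, and then the weighted Poincar\'{e} inequality converts this into the distributional bound $m\bigl(B_{\delta R}\cap\{|w-\bar w|\ge t\}\bigr)\le C\,m(B_{R})\,t^{-1}$ for all $t>0$, which is hypothesis (\ref{Lcondi2}) of Lemma~\ref{measure}. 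Applying Lemma~\ref{measure} to $g_{1}=e^{-\bar w}v$ (which again satisfies $\Delta^{V}g_{1}=0$, so the weighted mean value inequality supplies (\ref{alpha}) with $\alpha_{0}=p$) and to $g_{2}=e^{\bar w}v^{-1}$ (using the weighted version of Theorem~\ref{supu-1}) yields
\[
\int_{B_{\delta R}}v^{p}\,dm\le e^{C_{0}(1+aR+KR^{2})}m(B_{R})\inf_{B_{\delta R}}v^{p},
\]
and combining this with the weighted mean value inequality applied to $v$ itself gives (\ref{harnackpv}) after the usual rescaling of the inner radius.

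The step I expect to be the main obstacle is not any single estimate but the bookkeeping needed to confirm that uniform convexity/smoothness really does let every ingredient of Sections~\ref{Volcom}--\ref{sharnack} go through with constants depending on $\kappa,\kappa^{*}$ in place of $\Lambda$: concretely, that the weighted Poincar\'{e} and Sobolev inequalities can be obtained \emph{purely} by comparing the two Dirichlet forms (the geometric comparison theorems knowing nothing about $V$), and that the mere measurability of $V$ causes no loss of regularity in the Moser iteration — which is acceptable, since $g^{ij}(x,V(x))$ is a bounded, measurable, uniformly elliptic coefficient field, exactly the setting in which the De Giorgi--Nash--Moser estimates are available. Once this is granted, Theorem~\ref{Harnack2} is a transcription of the unweighted case.
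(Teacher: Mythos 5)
Your proposal is correct and follows essentially the same route as the paper's proof: establish the weighted mean value inequality by repeating the Moser iteration of Theorem~\ref{meanineq} with the pointwise bounds $dv(\nabla^{V}v)\geq\kappa^{-1}F^{*2}(dv)$ and $|d\varphi(\nabla^{V}v)|\leq(\kappa^{*})^{-1}F^{*}(\pm d\varphi)F^{*}(dv)$ furnished by uniform convexity/smoothness, then test $\Delta^{V}v=0$ with $\phi=v^{-1}\varphi^{2}$ to bound the Dirichlet energy of $\log v$, feed this into the Poincar\'{e} inequality and Lemma~\ref{measure}, and combine. The only (harmless) stylistic difference is that you package the ellipticity comparisons as stand-alone weighted Poincar\'{e}/Sobolev inequalities, whereas the paper keeps the unweighted functional inequalities and inserts the $\kappa,\kappa^{*}$ comparisons inline within the iteration.
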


\begin{proof}
We divide the proof into two steps.
\vskip 2mm
\noindent{\bf Step 1}  We firstly demonstrate that a non-negative function $v$ satisfying $\Delta^{V} v\geq -fv$  for a non-negative function $f\in L^{\infty}(B_R)$ must satisfy a mean value inequality as follows,
\begin{equation}\label{mean-v}
\sup_{B_{\delta R}} v^p \leq e^{C(1+aR+KR^2)}(1+\mathcal{A}R^2)^{\frac{\nu}{2}} (1-\delta)^{-\nu} m\left(B_R\right)^{-1} \int_{B_R} v^{p} dm.
\end{equation}
The proof is similar to the argument of Theorem \ref{meanineq}.  Actually, starting from $\Delta^{V} v\geq -fv$, for any $s\geq 1$, we can obtain the following inequality,
\be
(2s-1)\int_{B_R} \varphi^2 v^{2s-2}dv(\nabla^{V}v) d m +2 \int_{B_R} \varphi v^{2s-1} d\varphi (\nabla^{V}v) d m \leq  \int_{B_R} \varphi^2v^{2s} f d m. \label{mean-4-2}
\ee
It is just an analogue of (\ref{mean-4}). From (\ref{mean-4-2}), we can obtain
\beq
& & s^{2}\kappa^{-1}\int_{B_R} \varphi^2 v^{2s-2}F^{*2}(d v) d m \leq -2s \int_{B_R} \varphi v^{2s-1} d\varphi (\nabla^{V}v) d m + s \int_{B_R} \varphi^2 v^{2s} f d m \nonumber\\
& & \leq  2s (\kappa^*)^{-1}\int_{B_R} \varphi v^{2s-1} F^*(-d\varphi)F^*(dv)d m + s \int_{B_R} \varphi^2 v^{2s} f d m, \label{mean-4-3}
\eeq
where we have used the fact that $dv(\nabla^{V}v)\geq \kappa^{-1}F^{*2}(dv)$ and $-d\varphi (\nabla^{V}v)\leq(\kappa^*)^{-1}F^*(-d\varphi)F^*(dv)$.
Further, by (\ref{mean-4-3}), we can get
\be
\int_{B_R}\varphi^2 F^{*2}(dv^s)dm\leq \frac{4\kappa^2}{\kappa^{*2}(\delta'-\delta)^2R^2}\int_{B_{\delta'R}}v^{2s}dm+2s\kappa \sup_{B_R}|f|\int_{B_{\delta'R}}v^{2s}dm. \label{meanv1-2}
\ee
It is just the analogue of (\ref{meanv1}).  Then,  by the same argument as the proof of Theorem \ref{meanineq}, we assert that (\ref{mean-v}) holds.
\vskip 2mm

\noindent{\bf Step 2} Now, we will prove the Harnack inequality (\ref{harnackpv}) by using (\ref{mean-v}). In this case,  $f=0$.  For $\rho\in (0,1)$ and $0<\delta<1$, we have
\beq\label{har}
\sup\limits_{B_{\rho \delta R}}v^p \leq e^{C(1+aR+KR^2)} (1-\rho)^{-\nu} m\left(B_{\delta R}\right)^{-1}  \int_{B_{\delta R}} v^p dm.
\eeq
In the following, we only need to prove that the following inequality holds,
\begin{equation}
\int_{B_{\delta R}} v^p dm \leq e^{C(1+aR+KR^2)}m(B_{R})\inf\limits_{B_{\delta R}}v^p. \label{mean-v1}
\end{equation}
The proof is similar to the argument of Theorem \ref{Harnack}. Since $v$ satisfies $\Delta^{V}v=0$, we have
$$
\int_{B_R} d\phi(\nabla^V v) dm=0
$$
for all non-negative functions $\phi \in \mathcal{C}_0^{\infty}\left(B_R\right)$. For any $0<\delta<\delta'\leq1$, let $\phi=v^{-1}\varphi^2$, where $\varphi$ is a cut-off function defined by (\ref{cut}).
Thus,
\beqn
0 = \int_{B_R}d(v^{-1}\varphi^2)(\nabla^{V}v) dm = -\int_{B_R}v^{-2}\varphi^2 dv(\nabla^{V}v)dm+ 2\int_{B_R} v^{-1}\varphi d\varphi(\nabla^{V}v)dm.
\eeqn
From it, we can obtain
\beq\label{kv-1}
\kappa^{-1}\int_{B_R} v^{-2}\varphi^2 F^{*2}(dv)dm\leq 2(\kappa^*)^{-1}\int_{B_R} v^{-1}\varphi F^*(d\varphi)F^*(dv)dm,
\eeq
where we have used the fact that $dv(\nabla^{V}v)\geq \kappa^{-1}F^{*2}(dv)$ and $d\varphi (\nabla^{V}v)\leq(\kappa^*)^{-1}F^*(d\varphi)F^*(dv)$.
Set $w:=\log v$, by (\ref{kv-1}), we can get
\beqn
\kappa^{-1}\int_{B_R}\varphi^{2}F^{*2}(d w)dm &=& 2(\kappa^*)^{-1}\int_{B_R}\varphi F^*(d w)F^{*}(d \varphi) dm  \\
&\leq & \frac{1}{2}\kappa^{-1}\int_{B_R} \varphi^2F^{*2}(d w)dm+\frac{2\kappa}{\kappa^{*2}}\int_{B_R}F^{*2}(d\varphi)dm,
\eeqn
from which we can get
$$\int_{B_{\delta R}}F^{*2}(dw)dm\leq\frac{4\kappa^2}{\kappa^{*2}}\int_{B_{\delta' R}}F^{*2} (d\varphi)dm\leq\frac{4\Lambda^2\kappa^2}{\kappa^{*2}(\delta'-\delta)^2R^2}m({B_{\delta'R}})\leq\frac{4\kappa^3}{\kappa^{*2}(\delta'-\delta)^2R^2}m({B_{\delta'R}}).$$
It is just the analogue of (\ref{h1}). Then, by the same argument as the proof of Theorem \ref{Harnack}, we can obtain (\ref{mean-v1}). Combining (\ref{har}) and (\ref{mean-v1}), we conclude that (\ref{harnackpv}) holds.
\end{proof}

\section{Some applications of Harnack inequality}\label{Appl}

In this section, we will show some applications of the Harnack inequality. Firstly, we give the H\"{o}lder continuity estimate for positive harmonic functions. As we know,
Moser gave a proof of the H\"{o}lder continuity estimate in \cite{M1} as an application of the elliptic Harnack inequality for positive solutions of uniformly elliptic equations in $\mathbb{R}^n$. In this following, we will give a H\"{o}lder continuity estimate on Finsler metric measure manifolds. We first provide the following lemma.

\begin{lem}\label{Holder1}
Let $(M, F, m)$ be an $n$-dimensional forward complete Finsler measure space equipped with a uniformly convex and uniformly smooth Finsler metric $F$. Assume that ${\rm Ric}_{\infty}\geq -K$ for some $K\geq 0$ and $|\tau|\leq ar+b$, where $a,b$ are some non-negative constants, $r=d(x_0,x)$ is the distance function. If $u$ is a positive harmonic function on $B_R$, then, for any $\rho \in (0,1)$, there exists a constant $\alpha=\alpha(n,a,b,\kappa,\kappa^*,R,K)$, such that
\begin{equation}
\sup\limits_{x_1, x_2 \in B_{\rho R}} \left\{|u(x_1)-u(x_2)|\right\} \leq 2^{\alpha} \rho^{\alpha} \sup\limits_{B_R}\{u\}.\label{lem1}
\end{equation}
\end{lem}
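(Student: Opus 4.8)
The plan is to follow Moser's classical oscillation-decay argument, now powered by the $p$-Harnack inequality in its weighted form (Theorem \ref{Harnack2}, or equivalently Theorem \ref{Harnack} with $p=1$). First I would fix $\rho\in(0,1)$ and, for $0<r\le R$, introduce the oscillation quantity $\omega(r):=\sup_{B_r}u-\inf_{B_r}u$ together with $M(r):=\sup_{B_r}u$ and $\mu(r):=\inf_{B_r}u$; the function $\omega$ is monotone non-decreasing in $r$. The key observation is that both $u-\mu(r)$ and $M(r)-u$ are non-negative harmonic functions on $B_r$, so Harnack applies to each of them on a concentric ball $B_{\theta r}$ for a suitable fixed $\theta\in(0,1)$ (one may take $\theta=\tfrac12$). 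Applying \eqref{harnackp} with $p=1$ to $u-\mu(r)$ gives $\sup_{B_{\theta r}}(u-\mu(r))\le H\inf_{B_{\theta r}}(u-\mu(r))$, i.e. $M(\theta r)-\mu(r)\le H(\mu(\theta r)-\mu(r))$, where $H=e^{C(1+aR+KR^2)}$ is the Harnack constant evaluated on $B_R$ (so it does not depend on $r\le R$); applying it to $M(r)-u$ gives $M(r)-\mu(\theta r)\le H(M(r)-M(\theta r))$. Adding these two inequalities and using $M(r)-\mu(r)=\omega(r)$, $M(\theta r)-\mu(\theta r)=\omega(\theta r)$ yields $\omega(r)+\omega(\theta r)\le H(\omega(r)-\omega(\theta r))$, hence
\[
\omega(\theta r)\le \frac{H-1}{H+1}\,\omega(r)=:\lambda\,\omega(r),\qquad \lambda\in(0,1).
\]

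Next I would iterate this contraction. For a given $x_1,x_2\in B_{\rho R}$ set $s:=d_F(x_1,x_2)$; if $s\ge(1-\rho)R$ the estimate is trivial since then $|u(x_1)-u(x_2)|\le\omega(R)\le 2\sup_{B_R}u\le 2^\alpha\rho^\alpha\sup_{B_R}u$ for $\alpha$ chosen appropriately small, so assume $s<(1-\rho)R$ (in which case the forward ball of radius $\theta^{-k}\cdot(\text{something})$ around a point near $x_1$ stays inside $B_R$). Choosing the integer $k$ with $\theta^{k+1}R< s\le\theta^{k}R$ and iterating the contraction $k$ times from radius $R$ down to radius $\theta^k R$ gives
\[
|u(x_1)-u(x_2)|\le \omega(\theta^k R)\le \lambda^k\,\omega(R)\le 2\lambda^k\sup_{B_R}u.
\]
Writing $\alpha:=\log\lambda/\log\theta>0$, one has $\lambda^k=\theta^{k\alpha}\le (s/R)^{\alpha}\cdot\theta^{-\alpha}$ — or, absorbing constants as in the statement, $\lambda^k\le 2^{\alpha-1}(s/R)^\alpha$ — and since $s\le 2\rho R$ on $B_{\rho R}$ (in fact $s\le \mathrm{Diam}(B_{\rho R})$, which by the metric estimates via uniform convexity is controlled by $2\rho R$ up to the factor $\kappa^{1/2}$), one obtains $|u(x_1)-u(x_2)|\le 2^\alpha\rho^\alpha\sup_{B_R}u$. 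The exponent $\alpha$ depends only on $\theta$ and on $\lambda=(H-1)/(H+1)$, hence on $n,a,b,\kappa,\kappa^*,R,K$ through the Harnack constant $C$, consistent with the claimed dependence.

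The main obstacle — and the step that needs care rather than routine computation — is the geometric bookkeeping of forward balls: because $F$ is irreversible, $B_{\theta r}(y)$ for a center $y$ near $x_1$ is not automatically contained in $B_r(x_0)$, and one must verify that each ball in the iteration chain genuinely lies inside $B_R$ so that Harnack is applicable with the fixed constant $C$ computed on $B_R$. This is exactly where forward completeness, the reversibility constant $\Lambda$ (finite under uniform convexity/smoothness, with $\Lambda\le\sqrt{\kappa}$), and the uniform two-sided bounds \eqref{Fsc}–\eqref{F*sc} enter: they let one pass between forward and backward balls at the cost of controlled constants, and they give the comparison $d_F(x_1,x_2)\le\sqrt{\kappa}\,\|x_1-x_2\|$-type estimates needed to convert $\theta^k$ decay into $d_F(x_1,x_2)^\alpha$ decay. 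The rest — summing the two Harnack inequalities, extracting $\lambda$, and the dyadic iteration — is standard.
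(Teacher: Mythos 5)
Your oscillation-contraction scheme is the same one the paper uses, but there is a genuine gap at the very first step: you claim that ``both $u-\mu(r)$ and $M(r)-u$ are non-negative harmonic functions on $B_r$'' and then invoke Theorem \ref{Harnack}. In Finsler geometry this is false for $M(r)-u$. The Finsler Laplacian $\Delta$ is nonlinear: $\nabla(M(r)-u)=\mathcal{L}^{-1}(-du)$, which is not $-\nabla u$ unless $F$ is reversible, so $\Delta(M(r)-u)$ has no relation to $-\Delta u$ and there is no reason for $M(r)-u$ to be $\Delta$-harmonic. Thus Theorem \ref{Harnack} simply does not apply to $M(r)-u$, and your two-sided Harnack step breaks. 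This is precisely why the paper states and proves Theorem \ref{Harnack2}, the Harnack inequality for functions $v$ with $\Delta^{V}v=0$ for a fixed reference field $V$: since $\nabla^{\nabla u}$ is a genuinely linear operator, $\nabla^{\nabla u}(\zeta(s)-u)=-\nabla u$ and $\nabla^{\nabla u}(u-\epsilon(s))=\nabla u$, so \emph{both} functions satisfy $\Delta^{\nabla u}(\cdot)=0$ and the weighted Harnack applies to both with the same constant. Your parenthetical remark that Theorem \ref{Harnack2} is ``equivalently Theorem \ref{Harnack} with $p=1$'' is not correct; the two are different in exactly the way needed here.

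A secondary issue: Lemma \ref{Holder1} is a statement about concentric balls $B_{\rho R}(x_0)$, so the iteration should select $k$ from $\rho$ (pick $k$ with $2^{-(k+1)}\le\rho<2^{-k}$ and bound $\omega(\rho)\le\omega(2^{-k})\le 2^{-k\alpha}\omega(1)$), not from $d_F(x_1,x_2)$. By indexing on $d_F(x_1,x_2)$ and worrying about re-centering balls near $x_1$, you have drifted into proving (a version of) Theorem \ref{Holder} rather than Lemma \ref{Holder1}; the forward/backward ball bookkeeping you flag as the ``main obstacle'' is not needed in this lemma at all, since every ball in the chain is centered at $x_0$. That re-centering argument does appear, but only later, in the paper's deduction of Theorem \ref{Holder} from Lemma \ref{Holder1}.
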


\begin{proof}
For $s\in(0,1]$, we define $\zeta(s):= \sup\limits_{B_{sR}}\{u\}$, $\epsilon(s):= \inf\limits_{B_{sR}}\{u\}$, $\omega(s):=\zeta(s)-\epsilon(s)$. Obviously, $\omega(s)$ is a monotonous non-decreasing function on $(0,1]$.

Since $u$ is a positive harmonic function on $B_{R}$, and $\nabla^{\nabla u}(\zeta(s)-u)=-\nabla^{\nabla u}u$, $\nabla^{\nabla u}(u-\epsilon(s))=\nabla^{\nabla u}u$, we have $\Delta^{\nabla u}(\zeta(s)-u)=0$ and $\Delta^{\nabla u}(u-\epsilon(s))=0$ on $B_{R}$.

For $s\in(0,1]$, applying Theorem \ref{Harnack2} for $V=\nabla u$, $p=1$ and $\delta=\frac{s}{2}$ to $\zeta(s)-u$ and $u-\epsilon(s)$, we get
$$
\sup\limits_{B_{\frac{s}{2}R}}\{\zeta(s)-u\}\leq e^{C(1+aR+KR^2)}\inf\limits_{B_{\frac{s}{2}R}}\{\zeta(s)-u\}
$$
and
$$
\sup\limits_{B_{\frac{s}{2}R}}\{u-\epsilon(s)\}\leq e^{C(1+aR+KR^2)}\inf\limits_{B_{\frac{s}{2}R}}\{u-\epsilon(s)\},
$$
which mean that
$$
\zeta(s)-\epsilon\left(\frac{s}{2}\right)\leq e^{C(1+aR+KR^2)} \left(\zeta(s)-\zeta\left(\frac{s}{2}\right)\right)
$$
and
$$
\zeta\left(\frac{s}{2}\right)-\epsilon(s)\leq e^{C(1+aR+KR^2)} \left(\epsilon\left(\frac{s}{2}\right)-\epsilon(s)\right).
$$
Hence, we obtain $$\zeta(s)-\epsilon(s)+\zeta\left(\frac{s}{2}\right)-\epsilon\left(\frac{s}{2}\right)\leq e^{C(1+aR+KR^2)}\left(\zeta(s)-\epsilon(s)-\zeta\left(\frac{s}{2}\right)+\epsilon\left(\frac{s}{2}\right)\right),$$
that is,
\be\label{**}
\zeta\left(\frac{s}{2}\right)-\epsilon\left(\frac{s}{2}\right)\leq \frac{e^{C(1+aR+KR^2)}-1}{e^{C(1+aR+KR^2)}+1}\left(\zeta(s)-\epsilon(s)\right).
\ee
Set $A:=\frac{e^{C(1+aR+KR^2)}-1}{e^{C(1+aR+KR^2)}+1}$.  It is obvious that $A\in(0,1)$, and $A=2^{-\alpha}$, where $\alpha:=\log_2 \frac{1}{A}$. Then, we can rewrite (\ref{**}) as
\beq\label{iter1}
\omega\left(\frac{s}{2}\right)\leq 2^{-\alpha}\omega(s).
\eeq
Choosing $s_i=\frac{1}{2^i}$, $i=0,1,2,\cdots$. Applying (\ref{iter1}) for $s=s_i$, we have $\omega(s_{i+1})\leq 2^{-\alpha}\omega(s_i)$, and then
$$
\omega(s_{i+1})\leq 2^{-\alpha}\omega(s_i)\leq \dots \leq 2^{-(i+1)\alpha}\omega(1).
$$

For $\rho\in(0,1)$, there exists a non-negative integer $k$, such that $\frac{1}{2^{k+1}}\leq \rho < \frac{1}{2^k}$. Then
$$
\omega(\rho)\leq \omega(2^{-k})<2^{-k\alpha}\omega(1)=2^{\alpha}2^{-(k+1)\alpha} \omega(1)\leq 2^{\alpha} \rho^\alpha \omega(1).
$$
By the above inequality, we obtain that
$$
\sup\limits_{x_1,x_2\in B_{\rho R}}\{|u(x_1)-u(x_2)|\}\leq 2^{\alpha}\rho^\alpha\sup\limits_{B_R}\{u\}.
$$
This completes the proof of Lemma \ref{Holder1}.
\end{proof}

\vskip 2mm

Based on Lemma \ref{Holder1}, we can give immediately the proof of Theorem \ref{Holder}.

\begin{proof}[Proof of Theorem \ref{Holder}]
Fix $\rho\in(0,1)$.  For $x_1,x_2\in B_{\rho R}$, We divide the proof into two cases.

\vskip 2mm

\noindent{\bf Case 1} $d_{F}(x_1,x_2)\geq \frac{(1-\rho)R}{\sqrt{\kappa}}$. In this case,
\beq
|u(x_1)-u(x_2)| \leq \sup\limits_{B_R}\{u\} &\leq& (1-\rho)^{-\alpha}\kappa^{\frac{\alpha}{2}}R^{-\alpha}d_{F}(x_{1}, x_{2})^{\alpha}\sup\limits_{B_R}\{u\} \nonumber\\
&\leq & 2^{\alpha} \left(\frac{\sqrt{\kappa} d_{F}(x_{1},x_{2})}{(1-\rho)R}\right)^{\alpha} \sup\limits_{B_R}\{u\}. \label{holder1}
\eeq

\vskip 1mm

\noindent{\bf Case 2} $d_{F}(x_{1},x_{2})< \frac{(1-\rho)R}{\sqrt{\kappa}}$. Let $\gamma: [0,d_{F}(x_{1},x_{2})]\rightarrow M$ be the unit speed minimal geodesic with $\gamma(0)=x_1$ and $\gamma(d_{F}(x_{1}, x_{2}))= x_{2}$.
Set $z_{0}:=\gamma\left(\frac{d_{F}(x_{1},x_{2})}{2}\right)$. Then we get
$$
d_{F}(x_{0},z_{0})\leq d_{F}(x_{0},x_{1})+d_{F}(x_{1},z_{0})< \rho R+\frac{(1-\rho)}{2\sqrt{\kappa}}R\leq \rho R+\frac{(1-\rho)}{2}R=\frac{(1+\rho)R}{2}.
$$
Hence, $z_{0}\in B_{\frac{(1+\rho)R}{2}}\subset B_R$.

Let $\widetilde{B}:= B^{+}_{\frac{(1-\rho)R}{2}}(z_0)$. For any $z\in \widetilde{B}$, $d_{F}(x_{0},z)\leq d_{F}(x_{0},z_{0})+d_{F}(z_{0},z)\leq \frac{(1+\rho)}{2}R+\frac{(1-\rho)}{2}R=R$. Thus, $\widetilde{B}\subset B_{R}$. Further,
because
$$
d_{F}(z_{0},x_{1})\leq \sqrt{\kappa} d_{F}(x_{1},z_{0})=\frac{\sqrt{\kappa}}{2}d_{F}(x_{1},x_{2})< \frac{(1-\rho)}{2}R
$$
and
$$
d_{F}(z_{0},x_{2})\leq \frac{1}{2}d_{F}(x_{1},x_{2})\leq \frac{\sqrt{\kappa}}{2}d_{F}(x_{1},x_{2})< \frac{(1-\rho)}{2}R,
$$
we have $x_{1},x_{2}\in B^{+}_{\frac{\sqrt{\kappa} d_{F}(x_{1},x_{2})}{2}}(z_0)\subset \widetilde{B}$. For $\frac{\sqrt{\kappa}d_{F}(x_{1},x_{2})}{(1-\rho)R}<1$,  applying Lemma \ref{Holder1} on $\widetilde{B}$, we can find that
\be
|u(x_1)-u(x_2)|\leq 2^{\alpha}\left(\frac{\sqrt{\kappa} d_{F}(x_{1},x_{2})}{(1-\rho)R}\right)^{\alpha} \sup\limits_{\widetilde{B}}\{u\}\leq 2^{\alpha}\left(\frac{\sqrt{\kappa} d_{F}(x_{1},x_{2})}{(1-\rho)R}\right)^{\alpha} \sup\limits_{B_R}\{u\}. \label{holder2}
\ee
Combining (\ref{holder1})  with (\ref{holder2}), we get (\ref{holder}). This completes the proof of  Theorem \ref{Holder}.
\end{proof}

\vskip 3mm

Next, we will study Liouville property for positive harmonic functions on a forward complete Finsler manifold satisfying ${\rm Ric}_{\infty}\geq 0$ and $|\tau|\leq b$ and give the proof of Theorem \ref{Liouville1} by using the Harnack inequality.

\begin{proof}[Proof of Theorem \ref{Liouville1}]
Let $u$ be a positive harmonic function on $M$ which is bounded below. Let $i(u):=\inf\limits_{M}\{u\}$ and $v:=u-i(u)$. It is easy to check that $\nabla v=\nabla u$ and $\Delta v=0$. Then, applying Theorem \ref{Harnack} on $B_{R}$ to $v$ and taking $\delta =\frac{1}{2}$, we have the following
$$
\sup\limits_{B_{\frac{1}{2}R}}\{u-i(u)\}\leq e^{C}\inf\limits_{B_{\frac{1}{2}R}}\{u-i(u)\}.
$$
Letting $R\rightarrow \infty$, we find that $\inf\limits_{B_{\frac{1}{2}R}}\{u-i(u)\}\rightarrow 0$. Then we conclude  that $u=i(u)$ is a constant on $M$.
\end{proof}

\vskip 3mm

Finally, we will give the local gradient estimate for positive harmonic functions and prove Theorem \ref{Grad}. As an important fundamental, we need the following lemma.
\begin{lem}\label{Mean}
Let $(M, F, m)$ be an $n$-dimensional forward complete Finsler measure space equipped with a uniformly convex and uniformly smooth Finsler metric $F$. Assume that ${\rm Ric}_{\infty}\geq -K$  and $|\tau|\leq ar+b$ for some non-negative constants $K$ and  $a,b$, $r=d_{F}(x_{0},x)$ is the distance function. Suppose that $u \in W^{2,2}(B_R)$ satisfies
\begin{equation}
\int_{B_R}\phi\Delta^{\nabla u} F^2(\nabla u)dm\geq -2 K \int_{B_R} \phi F^2(\nabla u) dm \label{bw}
\end{equation}
for all non-negative functions $\phi \in \mathcal{C}_0^{\infty}\left(B_R\right)$. Then for any $0<p<\infty$ and $0<\delta <1$, there are constants $\nu=4(n+4b)-2 >2$ and $C=C(n, b, \kappa, \kappa^*)$ such that
\begin{equation}
\sup_{B_{\delta R}} F(\nabla u)^{2p} \leq e^{C(1+aR+KR^2)}(1+KR^2)^{\frac{\nu}{2}} (1-\delta)^{-\nu} m\left(B_R\right)^{-1} \int_{B_R} F(\nabla u)^{2p} dm. \label{harnack}
\end{equation}
\end{lem}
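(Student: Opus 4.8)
The plan is to observe that hypothesis (\ref{bw}) says precisely that the non-negative function $w:=F^2(\nabla u)$ satisfies $\Delta^{\nabla u}w\geq -f\,w$ in the weak sense on $B_R$ with the constant coefficient $f\equiv 2K\in L^{\infty}(B_R)$, where $\Delta^{\nabla u}$ is the weighted Laplacian associated to the (continuous, hence measurable) vector field $V=\nabla u$ and $M_w\subset M_{\nabla u}=M_V$. This is exactly the situation already treated in Step 1 of the proof of Theorem \ref{Harnack2}, where it was shown that any non-negative $v$ with $\Delta^{V}v\geq -fv$ in the weak sense satisfies the mean value inequality (\ref{mean-v}) with $\mathcal{A}=\sup_{B_R}|f|$. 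So the whole proof reduces to applying (\ref{mean-v}) to $v=w$ with exponent $p$ and $\mathcal{A}=2K$.

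More concretely, I would first set $w:=F^2(\nabla u)\geq 0$ and note that, since $u\in W^{2,2}(B_R)$ and $F$ is uniformly convex and uniformly smooth, the coefficients $g_{ij}(x,\nabla u)$ are uniformly elliptic and bounded, so $w\in W^{1,2}_{\mathrm{loc}}(B_R)$ and the test functions $\phi=w^{2s-1}\varphi^2$ ($s\geq 1$, $\varphi$ the cut-off from (\ref{cut})) used in the Moser iteration of Step 1 are admissible. Recalling $\int_{B_R}\phi\,\Delta^{\nabla u}w\,dm=-\int_{B_R}d\phi(\nabla^{\nabla u}w)\,dm$, hypothesis (\ref{bw}) is just $\int_{B_R}d\phi(\nabla^{\nabla u}w)\,dm\leq\int_{B_R}\phi\,(2K)\,w\,dm$ for all non-negative $\phi\in\mathcal{C}_0^{\infty}(B_R)$, i.e. the analogue of (\ref{mean-2}) with $f=2K$. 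Running the iteration exactly as in Step 1 of Theorem \ref{Harnack2} — so that the uniform constants enter through $dw(\nabla^{\nabla u}w)\geq\kappa^{-1}F^{*2}(dw)$ and $-d\varphi(\nabla^{\nabla u}w)\leq(\kappa^*)^{-1}F^{*}(-d\varphi)F^{*}(dw)$ — gives
\[
\sup_{B_{\delta R}}F(\nabla u)^{2p}=\sup_{B_{\delta R}}w^{p}\leq e^{C(1+aR+KR^2)}\,(1+2KR^2)^{\frac{\nu}{2}}\,(1-\delta)^{-\nu}\,m(B_R)^{-1}\int_{B_R}F(\nabla u)^{2p}\,dm,
\]
with $\nu=4(n+4b)-2$ and $C=C(n,b,\kappa,\kappa^*)$.

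It then remains only to replace $(1+2KR^2)^{\nu/2}$ by $(1+KR^2)^{\nu/2}$: since $1+2KR^2\leq 2(1+KR^2)$ we have $(1+2KR^2)^{\nu/2}\leq 2^{\nu/2}(1+KR^2)^{\nu/2}$, and because $\nu$ depends only on $n,b$ and $1\leq 1+aR+KR^2$, the extra factor $2^{\nu/2}$ can be absorbed into $e^{C(1+aR+KR^2)}$ after enlarging $C$ by $\tfrac{\nu}{2}\log 2$, which leaves $C$ of the form $C(n,b,\kappa,\kappa^*)$ and yields (\ref{harnack}). The only point requiring genuine care is the admissibility and $W^{1,2}_{\mathrm{loc}}$-regularity of $w=F^2(\nabla u)$ as input to the iteration — which is exactly what the hypotheses $u\in W^{2,2}(B_R)$ and uniform convexity/smoothness are there to supply (they also guarantee $M_w\subset M_{\nabla u}$ up to a null set, which is all that the weak formulation needs) — after which the argument is a direct citation of Step 1 of Theorem \ref{Harnack2} together with the elementary rescaling above, and I do not expect any substantially new difficulty beyond this bookkeeping.
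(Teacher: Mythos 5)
Your proof is correct and follows essentially the same path as the paper: the paper likewise sets $h=F^2(\nabla u)$, tests (\ref{bw}) against $\phi=h^{2s-1}\varphi^2$, uses uniform convexity/smoothness to bound $dh(\nabla^{\nabla u}h)$ and $d\varphi(\nabla^{\nabla u}h)$, arrives at the analogue of (\ref{meanv1-2}) with coefficient $4s\kappa K$, and then invokes the Moser iteration of Theorem \ref{meanineq} (equivalently, Step~1 of Theorem \ref{Harnack2}) to conclude. Your explicit remark that the iteration naturally produces $(1+2\kappa K R^2)^{\nu/2}$ and that the extra constant factor is absorbed into $e^{C(1+aR+KR^2)}$ by enlarging $C=C(n,b,\kappa,\kappa^*)$ is a small but genuine improvement in rigor over the paper's presentation, which leaves this bookkeeping implicit.
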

\begin{proof}
Let $h(x):=F^2(\nabla u)$. Obviously,  $h\in W^{1,2}(B_R)$. For $0<\delta<\delta'\leq 1$, $s\geq 1$. Let $\phi:=h^{2s-1}\varphi^2$, where $\varphi$ is the cut-off function defined by (\ref{cut}).
By (\ref{bw}), we have
$$(2s-1)\int_{B_R}\varphi^2h^{2s-2}dh(\nabla^{\nabla u} h)dm+2\int_{B_R}\varphi h^{2s-1}d\varphi (\nabla^{\nabla u} h)dm\leq 2K\int_{B_R}\varphi^2 h^{2s}dm.$$
Then, by (\ref{F*sc}), we have $$s^2\kappa^{-1}\int_{B_R}\varphi^2h^{2s-2}F^{*2}(dh)dm\leq -2s\int_{B_R}\varphi h^{2s-1} d\varphi(\nabla^{\nabla u}h)dm+ 2sK \int_{B_R} \varphi^2h^{2s}dm,$$
which means that
\beqn
\kappa^{-1}\int_{B_R}\varphi^2F^{*2}(dh^s)dm &\leq& \frac{1}{2}\kappa^{-1}\int_{B_R}\varphi^2F^{*2}(dh^s)dm+\frac{2\kappa}{\kappa^{*2}}
\int_{B_R}h^{2s}F^{*2}(-d\varphi)dm\\
& &+2sK\int_{B_R}\varphi^2h^{2s}dm,
\eeqn
where we have used the fact that $-d\varphi(\nabla^{\nabla u} h)\leq (\kappa^*)^{-1}F^*(-d\varphi)F^*(dh)$.
By the above inequality, we get
\beqn
\int_{B_R}\varphi^2F^{*2}(dh^s)dm\leq\frac{4\kappa^2}{\kappa^{*2}(\delta'-\delta)^2R^2}\int_{B_{\delta'R}}h^{2s}dm+4s\kappa K\int_{B_{\delta'R}}h^{2s}dm,
\eeqn
which is just an analogue of (\ref{meanv1}). Then, by the similar argument in the proof of Theorem \ref{meanineq}, we can derive (\ref{harnack}).
\end{proof}

\vskip 2mm

Now, based on Lemma \ref{Mean}, we can  prove Theorem \ref{Grad}.

\begin{proof}[Proof of Theorem \ref{Grad}]
For any positive harmonic function $u$ satisfying $\Delta u=0$ in a weak sense on $M$, it is known that $u\in W^{2,2}_{loc}(M)\bigcap \mathcal{C}^{1,\alpha}(M)$ (\cite{Ohta1}). By Bochner-Weitzenb\"{o}ck formula (\ref{BWforinf}) and our assumption, we have
\begin{equation}
\int_{B_R}\phi\Delta^{\nabla u} F^2(\nabla u)dm\geq -2 K \int_{B_R} \phi F^2(\nabla u) dm
\end{equation}
for all non-negative functions $\phi\in \mathcal{C}^\infty_0(M)$. Hence, by Lemma \ref{Mean} with $p=1$, $R=\frac{1}{4}$, $\delta=\frac{1}{2}$, we obtain
\be
\sup_{B_{\frac{1}{8}}(x)}F^2(\nabla u)\leq e^{C(1+\frac{a}{4}+\frac{K}{16})}m(B_{\frac{1}{4}})^{-1}\left(1+\frac{K}{16}\right) ^{\frac{\nu}{2}}\int_{B_{\frac{1}{4}}(x)} F^2(\nabla u)dm,\label{mean1}
\ee
where $C=C(n, b, \kappa, \kappa^*)$ is a universal constant.

In the following, we continue to denote some universal constant  by $C>0$ , which may be different line by line. Let $\phi$ be a cut-off function defined by
$$
\phi(z)=
\begin{cases}
1 & \text{ on } B_{\frac{1}{4}}(x), \\
2-4d_{F}(x, z) & \text{ on } B_{\frac{1}{2}}(x) \backslash B_{\frac{1}{4}}(x), \\
0 & \text{ on } B_{{1}}(x) \backslash B_{\frac{1}{2}}(x).
\end{cases}
$$
Then $F^*(-d\phi)\leq 4$ and $F^*(d\phi)\leq 4\kappa$  on $B_{\frac{1}{2}}(x)$. It is easy to see that
\beq
\int_{M}\phi^2 F^2(\nabla u) dm &=&\int_{M}\phi^{2} du(\nabla u) dm =\int_{M}d(u\phi^2)(\nabla u)dm-2\int_{M}u\phi d\phi(\nabla u)dm \nonumber \\
&=& -\int_{M}u \phi^{2} \Delta u \ dm - 2\int_{M} u\phi \ d\phi(\nabla u) \ dm \leq 2 \int_{M} u\phi F^{*}(-d \phi) F(\nabla u) dm \nonumber \\
&\leq &~\frac{1}{2}\int_{M}\phi^2 F^2(\nabla u) dm +2\int_{M}u^2 F^{*2}(-d\phi) dm, \label{addprf}
\eeq
where we have used that $\Delta u =0$ in second line and Young inequality in the last line. (\ref{addprf}) implies that
$$
\int_{M}\phi^2 F^2(\nabla u) dm\leq 4\int_{M}u^2 F^{*2}(-d\phi) dm.
$$
Therefore,
\be
\int_{B_{\frac{1}{4}}(x)}F^2(\nabla u) dm\leq 64\int_{B_{\frac{1}{2}}(x)}  u^2 dm\leq 64~m\left({B_{\frac{1}{2}}(x)}\right)\left(\sup\limits_{B_{\frac{1}{2}}(x)} u\right)^2. \label{Bnau}
\ee
Combining (\ref{Bnau}) with (\ref{mean1}) yields
\be
\sup\limits_{B_{\frac{1}{8}}(x)}F^2(\nabla u)\leq e^{C(1+\frac{3}{2}a+\frac{K}{16})}\left(1+\frac{K}{16}\right)^{\frac{\nu}{2}}\left(\sup\limits_{B_{\frac{1}{2}}(x)}u\right)^{2},
\ee
where we have used volume comparison (\ref{volume}). Further, by applying the Theorem \ref{Harnack}, we can obtain
$$
\sup\limits_{B_{\frac{1}{8}}(x)}F^2(\nabla u)\leq e^{C(1+\frac{3}{2}a+ K)}\left(1+\frac{K}{16}\right)^{\frac{\nu}{2}}\left(\inf\limits_{B_{\frac{1}{8}}(x)}u\right)^2.
$$
Hence, for any $x\in M$, we have
$$
F(\nabla u(x))\leq e^{C(1+\frac{3}{2} a+ K)}\left(1+\frac{K}{16}\right)^{\frac{\nu}{4}}u(x),
$$
where $\nu=4(n+4b)-2$. Finally, we conclude the following
\be
F(\nabla \log u(x))\leq e^{C(1+\frac{3}{2}a+ K)}\left(1+\frac{K}{16}\right)^{n+4b}.
\ee
For $F(\nabla (-\log u))=\overleftarrow{F}(\overleftarrow{\nabla}\log u)$, the same argument works. Thus we conclude that the gradient estimate (\ref{grad}) holds.
\end{proof}

\vskip 5mm

\vskip 10mm

\noindent Xinyue Cheng \\
School of Mathematical Sciences\\
Chongqing Normal University \\
Chongqing, 401331, P.R. China\\
E-mail: chengxy@cqnu.edu.cn

\vskip 5mm

\noindent Liulin Liu \\
School of Mathematical Sciences\\
Chongqing Normal University \\
Chongqing, 401331, P.R. China\\
E-mail:  2023010510010@stu.cqnu.edu.cn

\vskip 5mm

\noindent Yu Zhang  \\
School of Mathematical Sciences\\
Chongqing Normal University \\
Chongqing, 401331, P.R. China\\
E-mail:  2496524954@qq.com


\begin{thebibliography}{Ma}


\bibitem{AnS} Anderson M., Schoen R.: Positive harmonic functions on complete manifolds of negative curvature. Ann. of Math. {\bf 121}, 429-461 (1985)

\bibitem{au} Aubin, T.: Some Nonlinear Problems in Riemannian Geometry. Springer-Verlag, Berlin (1998)

\bibitem{BaoChernShen} Bao, D., Chern, S.S., Shen, Z.: An Introduction to Riemann-Finsler Geometry. GTM 200, Springer, New York (2000)

\bibitem{CF} Cheng, X., Feng, Y.: Some functional inequalities and their applications on Finsler measure Spaces. J. Geom. Anal. {\bf 34}(5), (2024)

\bibitem{ChSh} Cheng, X., Shen, Z.: Some inequalities on Finsler manifolds with weighted Ricci curvature bounded below. Results Math. {\bf 77}, Article number: 70 (2022)

\bibitem{ChernShen} Chern, S.S., Shen, Z.: Riemann-Finsler Geometry. Nankai Tracts in Mathematics, Vol. 6, World Scientific, Singapore (2005)

\bibitem{M1} Moser, J.: On Harnack's theorem for elliptic differential equations. Commun. Pure Appl.    Math. {\bf 14}(3), 577-591 (1961)


\bibitem{MuWa1} Munteanu, O., Wang, J.: Smooth metric measure spaces with non-negative curvature. Commun. Anal. Geom. {\bf 19}(3), 451-486 (2011)

\bibitem{MuWa2} Munteanu, O., Wang, J.: Analysis of weighted Laplacian and applications to Ricci solitons. Commun. Anal. Geom. {\bf 20}(1), 55-94 (2012)

\bibitem{N} Nash, J.: Continuity of solutions of parabolic and elliptic equations. Amer. J. Math. {\bf 80}(4), 931-954 (1958)

\bibitem{Ohta} Ohta, S.:  Uniform convexity and smoothness, and their applications in Finsler geometry. Math. Ann. {\bf 343}, 669-699 (2009)

\bibitem{Ohta1} Ohta, S.: Comparison Finsler Geometry. Springer Monographs in Mathematics, Springer, Cham (2021)

\bibitem{Ohta3} Ohta, S., Sturm, K.T.: Bochner-Weizenb\"{o}ck formula and Li-Yau estimates on Finsler manifolds. Adv. Math. {\bf 252}, 429-448 (2014)

\bibitem{Ra}  Rademacher, H.B.: Nonreversible Finsler metrics of positive flag curvature. In: ``A Sampler of Riemann-Finsler Geometry",  MSRI Publications, {\bf 50}, Cambridge University Press, Cambridge (2004)

\bibitem{SC1} Saloff-Coste, L.: Uniformly elliptic operators on Riemannian manifolds. J. Diff. Geom. {\bf 36}(2), 417-450 (1992)

\bibitem{SC} Saloff-Coste, L.: Aspects of Sobolev-Type Inequalities. Cambridge University Press, Cambridge (2001)

\bibitem{shen} Shen, Z.:  Volume comparison and its applications in Riemann-Finsler geometry. Adv. Math. {\bf 128}(2), 306-328 (1997)

\bibitem{Shen1} Shen, Z.:  Lectures on Finsler Geometry.  World Scientific, Singapore (2001)

\bibitem{WeWy}  Wei, G., Wylie, W.: Comparison geometryfor the Bakry-Emery Ricci tensor.  J. Differ. Geom. {\bf 83}, 377-405 (2009)

\bibitem{WuXin} Wu, B., Xin, Y.: Comparison theorems in Finsler geometry and their applications. Math. Ann. {\bf 337}(1), 177-196 (2007)

\bibitem{XiaC} Xia, C.: Local gradient estimate for harmonic functions on Finsler manifolds. Calc. Var. Partial Differ. Equ. {\bf 51}(3), 849-865 (2014)

\bibitem{XiaQ1} Xia, Q.: Local and global gradient estimates for Finsler p-harmonic functions. Commun. Anal. Geom. {\bf 30}(2), 451-500 (2022)

\bibitem{XiaQ2} Xia, Q.: Some $L^p$ Liouville theorems on Finsler measure spaces. Differ. Geom. Appl. {\bf 87}, Article number: 101987 (2023)

\bibitem{Yau} Yau, S.T.: Harmonic functions on complete Riemannian manifolds. Comm. Pure Appl. Math. {\bf 28}(2), 201-228 (1975)
\end{thebibliography}
\end{document}